\newcommand{\dout}[1]{\unskip}
\newcommand\str{\bgroup\markoverwith{\textcolor{red}{\rule[0.5ex]{2pt}{1.5pt}}}\ULon} 
\newtheorem{theorem}{Theorem}[section]
\newtheorem{assumption}{Assumption}[section]
\newtheorem{lemma}[theorem]{Lemma}
\DeclareMathOperator*{\argmin}{\arg\!\min}
\patchcmd{\@addmarginpar}{\ifodd\c@page}{\ifodd\c@page\@tempcnta\m@ne}{}{}
\newcites{latex}{\small SUPPLEMENTARY MATERIAL: REFERENCES}
\def\grad{\nabla}
\def\be{\mathbf{e}}
\def\bp{\mathbf{p}}
\def\bs{\mathbf{s}}
\def\bu{\mathbf{u}}
\def\bv{\mathbf{v}}
\def\bw{\mathbf{w}}
\def\bx{\mathbf{x}}  
\def\by{\mathbf{y}}
\def\bC{\mathbf{C}}
\def\bD{\mathbf{D}}
\def\bI{\mathbf{I}}
\def\bJ{\mathbf{J}}
\def\bL{\mathbf{L}}
\def\bQ{\mathbf{Q}}
\def\cA{\mathcal{A}}
\def\cB{\mathcal{B}}
\def\cC{\mathcal{C}}
\def\cE{\mathcal{E}}
\def\cF{\mathcal{F}}
\def\cG{\mathcal{G}}
\def\cH{\mathcal{H}}
\def\cI{\mathcal{I}}
\def\cK{\mathcal{K}}
\def\cL{\mathcal{L}}
\def\cN{\mathcal{N}}
\def\cO{\mathcal{O}}
\def\cP{\mathcal{P}}
\def\cR{\mathcal{R}}
\def\cS{\mathcal{S}}
\def\cT{\mathcal{T}}
\def\cW{\mathcal{W}}
\def\cX{\mathcal{X}}
\def\cY{\mathcal{Y}}
\def\smskip{\smallskip}
\def\texitem#1{\par\smskip\noindent\hangindent 25pt
               \hbox to 25pt {\hss #1 ~}\ignorespaces}
\def\norm#1{\left\|#1\right\|}
\newcommand{\BEAS}{\begin{eqnarray*}}
\newcommand{\EEAS}{\end{eqnarray*}}
\newcommand{\BEA}{\begin{eqnarray}}
\newcommand{\EEA}{\end{eqnarray}}
\newcommand{\BEQ}{\begin{eqnarray}}
\newcommand{\EEQ}{\end{eqnarray}}
\newcommand{\BIT}{\begin{itemize}}
\newcommand{\EIT}{\end{itemize}}
\newcommand{\BNUM}{\begin{enumerate}}
\newcommand{\ENUM}{\end{enumerate}}
\newcommand{\BA}{\begin{array}}
\newcommand{\EA}{\end{array}}
\newcommand{\ones}{\mathbf 1}
\newcommand{\reals}{\mathbb{R}}
\newcommand{\integers}{\mathbb{Z}}
\newcommand{\diag}{\mathop{\bf diag}}
\newcommand{\dom}{\mathop{\bf dom}}
\newif\ifpagenumbering
\newsavebox{\theorembox}
\newsavebox{\lemmabox}
\newsavebox{\defnbox}
\newsavebox{\corollarybox}
\newsavebox{\remarkbox}
\newsavebox{\assbox}
\savebox{\theorembox}{\noindent\bf Theorem}
\savebox{\lemmabox}{\noindent\bf Lemma}
\savebox{\defnbox}{\noindent\bf Definition}
\savebox{\corollarybox}{\noindent\bf Corollary}
\savebox{\remarkbox}{\noindent\bf Remark}
\newtheorem{remark}{\usebox{\remarkbox}}[section]
\newtheorem{defn}{\usebox{\defnbox}}
\def\fprod#1{\left\langle#1\right\rangle}
\def\prox#1{\mathbf{prox}_{#1}}
\def\ind#1{\mathbb{I}_{#1}}
\def\T{\mathsf{T}}
\def\id{\mathbf{I}}
\def\zero{\mathbf{0}}
\def\one{\mathbf{1}}
\def\btheta{\boldsymbol{\theta}}
\def\blambda{\boldsymbol{\lambda}}
\def\bmu{\boldsymbol{\nu}}
\def\bom{\boldsymbol{\omega}}
\def\st{{\rm s.t.}}
\def\tcR{\widetilde{\cR}}
\def\ttau{\tilde{\tau}}
\newcommand{\ubar}[1]{\underaccent{\bar}{#1}}
\def\sy{\mathbb{S}}
\def\ns{\mathbf{Null}}
\def\spn{\mathbf{Span}}
\def\sa#1{\textcolor{black}{#1}}
\def\nsa#1{\textcolor{black}{#1}}
\def\rev#1{\textcolor{black}{#1}}
\def\eyh#1{\textcolor{black}{#1}}
\def\str#1{\textcolor{red}{#1}}
\def\eh#1{\textcolor{black}{#1}}
\def\saa#1{\textcolor{black}{#1}}
\def\eyy#1{\textcolor{black}{#1}}
\def\ehh#1{\textcolor{black}{#1}}
\def\cC{\Xi}
\def\Ct{\widetilde{\cC}}
\newcommand{\dbtilde}[1]{\accentset{\approx}{#1}}
\title{A Decentralized Primal-Dual Method for Constrained Minimization of a Strongly Convex Function}
\author{Erfan Yazdandoost Hamedani\\
Systems and Industrial Engineering Department\\
	The University of Arizona \\
    Tucson, AZ 85721,\\
	\texttt{erfany@arizona.edu}
	\And
	Necdet Serhat Aybat\\
	Industrial and Manufacturing Engineering Department\\
	Penn State University\\
	University Park, PA 16802,\\
	\texttt{nsa10@psu.edu}
}
\date{}
\begin{document}

\maketitle
\begin{abstract}
We propose decentralized primal-dual methods for cooperative multi-agent consensus optimization problems over both static and {time-varying communication networks}, where only local communications are allowed. The objective is to minimize {the sum of agent-specific
convex functions} over 
conic constraint sets defined by agent-specific nonlinear functions; hence, the optimal consensus decision should lie in the intersection of these private sets. 
\saa{Under the strong convexity assumption,} we provide convergence rates for sub-optimality, infeasibility and consensus violation in terms of the number of communications required; examine the effect of underlying network topology on the convergence rates. 
\end{abstract}
\keywords{ Constrained optimization, Saddle point problem, Distributed algorithms,
Convergence analysis}
\allowdisplaybreaks
\vspace*{-3mm}
\section{Introduction}\label{sec:intro}
{There has been much recent interest in decentralized optimization over communication networks that arise in various applications such as i) distributed parameter estimation in wireless sensor networks~\cite{schizas2008consensus}; ii) multi-agent cooperative control and coordination in multirobot networks~\cite{zhou2011multirobot};
iii) processing distributed big-data in (online) machine learning~\cite{tsianos2012consensus,duchi2012dual}; iv) power control problem in cellular networks~\cite{ram2009distributed}.}

\rev{In this paper, from a broader perspective, we aim to study constrained distributed optimization of a strongly convex function over time-varying communication networks $\cG^t=(\cN,\cE^t)$ for $t\geq 0$; in particular, from an application perspective, we are motivated to design an efficient \emph{decentralized} solution method for finding a point in the intersection of ellipsoids that arises in many control problems which will be discussed shortly.}

\noindent {\bf Applications.} \rev{1) In \emph{multi-agent control design},
consider computing an optimal consensus decision satisfying the constraint of each agent $i\in\cN$, involving some {\it uncertain parameter} $q_i\in\reals^n$, with at least $1-\epsilon$ probability, i.e., 
$\min_x\{\sum_{i\in\cN} f_i(x)\mid\mathbb{P}(\{q_i: q_i^\top x\leq b_i\})\geq 1-\epsilon, i\in\cN\}$. 
When the 
distribution of the uncertain parameters are elliptical log-concave, the problem can be formulated as a minimization 
over an intersection of ellipsoids \cite{lagoa2005probabilistically}. 2) \emph{Multi-agent localization in sensor networks} aims to collaboratively locate a target $\bar{x}\in\reals^n$. Suppose each agent $i\in\cN$ has a directional sensor which can detect a target when it belongs to $\cE_i=\{A_i y_i:~\norm{y_i}_2\leq d_i\}$. Let $\cI=\{i\in\cN:~\bar{x}\in\cE_i\}$. Here, one might estimate the target location $\bar{x}$ by solving $\min_x\{\norm{x}_2:~x\in\cap_{i\in\cI}\cE_i\}$.} 
3) \rev{\emph{Rendez-vous point problem} arises in multi-agent control systems where the objective is to compute a common point in the reachability sets of all agents~\cite{stewart2010cooperative}. Consider a  linear system $x_i(t+1)=A_i x_i(t)+B_iu_i(t)$ for each agent with \saa{a} finite horizon $T$. Suppose the control variable $u_i(t)$ 
must satisfy $\norm{u_i(t)}\leq r_i$ for $i\in\cN$ and $t=0,\hdots,T-1$. Since $x_i(T)$ is an affine map of $\{u_i(t)\}_{t=0}^{T-1}$, $x_i(T)$ lies in an ellipsoid $\cE_i(T)$; hence, 
agents can meet at time $T$ if they find a point in $\cap_{i\in\cN}\cE_i(T)$. 4) In \emph{distributed robust optimization}, one aims to 
solve $\min_{x\in X}\{\sum_{i\in\cN} f_i(x)\mid g_i(x,q)\leq 0, ~\forall q\in Q,~ \forall i\in\cN\}$ where $Q$ represents an uncertainty set. This problem can be reformulated as \eqref{eq:central_problem} using a scenario based approach~\cite{you2018distributed}, and includes
robust model predictive control, robust regression 
as special cases\cite{bertsimas2011theory,ben2009robust}.}

\rev{
For decentralized 
constrained consensus optimization 
over communication networks, we propose
\ehh{a \emph{distributed} primal-dual algorithm (DPDA) for static networks and its time-varying variant (DPDA-TV) for time-varying 
networks.}
DPDA and {DPDA-TV are} based on the accelerated primal-dual~(APD) algorithm we recently proposed \cite{hamedani2018primal} for convex-concave saddle-point (SP) problems.}

\noindent \textbf{Problem Description.} {Let $\{\cG^t\}_{t\in\reals_+}$ denote a time-varying graph of $N$ \emph{computing} nodes. More precisely, for all $t\geq 0$, the graph has the form $\cG^t=(\cN,\cE^t)$, where $\cN\triangleq\{1,\ldots,N\}$ is the set of nodes and $\mathcal{E}^t\subseteq \mathcal{N}\times \mathcal{N}$ 
is the set of (possibly directed) edges at time $t$.} Suppose that each node $i\in\cN$ has a \emph{private} (local) cost function ${\varphi_i}:\reals^n\rightarrow\reals\cup\{+\infty\}$ such that
{
\begin{equation}
\label{eq:F_i}
{\varphi_i}(x)\triangleq \rho_i(x) + f_i(x),
\end{equation}
}%
where $\rho_i: \mathbb{R}^n \rightarrow \mathbb{R}\cup\{+\infty\}$ is a possibly \emph{non-smooth} convex function, and $f_i: \mathbb{R}^n \rightarrow \mathbb{R}$ is a \emph{smooth} convex function.

Consider the following minimization problem:
{
\begin{align}\label{eq:central_problem}
x^*\in \argmin_{x\in\reals^n}\{\bar{\varphi}(x) \triangleq \sum_{i\in \mathcal{N}}\varphi_i(x):~g_i(x) \in -\mathcal{K}_i,\  i\in\mathcal{N}\}, \vspace*{-2mm}
\end{align}}%
\noindent where for each $i\in\cN$, $\mathcal{K}_i\subseteq{\reals}^{m_i}$ is a closed, convex cone and $g_i:\reals^n \rightarrow \reals^{m_i}$ is a 
$\cK_i$-convex function~\cite[Chapter 3.6.2]{boyd2004convex}; \rev{moreover, both $\cK_i$ and $g_i$ are only known to node $i\in\cN$.}

\rev{We assume the following: \textbf{i)} projections onto $\cK_i$ can be computed efficiently, while the projection onto the \emph{preimage} ${\chi_i\triangleq} g_i^{-1}(-\cK_i)$ is 
\emph{not} practical, e.g., when $\cK_i$ is the positive semidefinite cone, computing a projection onto 
$\chi_i$ requires solving a nonlinear SDP; \textbf{ii)} \saa{each $\varphi_i$ is strongly convex for $i\in\cN$}; \textbf{iii)} nodes are willing to collaborate, without sharing their \emph{private} data defining $\chi_i$ and $\varphi_i$, to compute an optimal consensus decision;
moreover, \textbf{iv)} nodes are only allowed to communicate with neighbors 
over the links; and finally, 
\textbf{v)} all nodes are synchronized with respect to a global clock known to all nodes and that every time the global clock ticks, one communication round with instant messaging among neighboring nodes takes place and each local operation can be completed between two tics of the clock.} 

\noindent \textbf{Previous Work.} Consider $\min_{x\in\reals^n}\{\bar{\varphi}(x):\ x\in\cap_{i\in\cN}\chi_i\}$ over a communication network of computing agents $\cN$, where $\bar{\varphi}(x)=\sum_{i\in\cN}\varphi_i(x)$.  \rev{Although the \emph{unconstrained} consensus optimization, i.e., $\chi_i=\reals^n$, is well studied for static networks (see \cite{makhdoumi17,chang2015multi,shi2015extra,zeng2015extrapush,Dextra16,xi2016add}) or time-varying networks (see \cite{Nedic08_1J,nedic2014distributed,nedich2016achieving}),
the \emph{constrained} case 
is currently an area of active research, e.g.,
\cite{Nedic08_1J,nedic2014distributed,nedic2010constrained,srivastava2010distributed,zhu2012distributed,yuan2011distributed,chang2014distributed,mateos2015distributed,chang2014proximal,yuan2015regularized,aybat2016primal}.
}
For constrained consensus optimization, other than few exceptions, e.g.,~\cite{zhu2012distributed,yuan2011distributed,chang2014distributed,mateos2015distributed,chang2014proximal,yuan2015regularized,aybat2016primal}, the existing methods require that each node compute a projection on the local set $\chi_i$ in addition to consensus and (sub)gradient steps, e.g., \cite{nedic2010constrained,srivastava2010distributed}. 
Among those few exceptions, 
only \cite{chang2014distributed,mateos2015distributed,chang2014proximal,yuan2015regularized,aybat2016primal} can handle agent-specific constraints without assuming global knowledge of the constraints by all agents. However, \emph{no} rate results in terms of suboptimality, local infeasibility, and consensus violation exist for the primal-dual distributed methods in~\cite{chang2014distributed,mateos2015distributed,chang2014proximal,yuan2015regularized} when implemented for the agent-specific \emph{conic} constraint sets {$\chi_i=\{x\in\reals^n:~g_i(x)\in -\cK_i\}$} studied in this paper -- our preliminary results have appeared in~\cite{hamedani2017multi}.
{Below, we briefly discuss some closely related works.}

\rev{In~\cite{mateos2015distributed},
the authors proposed a distributed algorithm on time-varying  networks for solving 
SP problems subject to consensus constraints. The algorithm can also solve consensus optimization problems with \emph{inequality} constraints that can be written as summation of local convex functions of local and global variables. It is shown that using a 
decreasing step-size sequence, the ergodic average of primal-dual sequence converges with $\mathcal{O}(1/\sqrt{k})$ rate in terms of saddle-point evaluation error; however, when applied to constrained optimization problems, \emph{no} rate in terms of either suboptimality or infeasibility is provided. In~\cite{chang2014proximal}, a proximal dual consensus ADMM method, PDC-ADMM, is proposed by Chang to minimize $\bar{\varphi}$ subject to a coupling equality and agent-specific linear constraints over a static network -- time-varying undirected networks are also considered 
with agents on/off and communication links failing randomly with certain probabilities.
It is shown that both for static and time-varying cases, PCD-ADMM have $\cO(1/k)$ ergodic convergence rate in the mean for suboptimality and infeasibility when each function in the objective is strongly convex for $i\in\cN$. More recently, in~\cite{aybat2016primal},
we proposed a distributed primal-dual method to solve \eqref{eq:central_problem} when $\varphi_i=\rho_i+f_i$ is composite convex \saa{and $g_i$ is affine}. Assuming $f_i$ is smooth, $\cO(1/k)$ ergodic rate is shown for suboptimality and infeasibility. Here, we aim to improve on this rate and achieve $\cO(1/k^2)$ ergodic rate by further assuming 
\saa{each $\varphi_i$} is strongly convex.}

\noindent\textbf{Contribution.} To the best of our knowledge, only a handful of methods, e.g.,~\cite{chang2014distributed,mateos2015distributed,chang2014proximal,yuan2015regularized,aybat2016primal} can handle consensus problems, similar to \eqref{eq:central_problem}, with agent-specific \emph{local} constraint sets $\{\chi_i\}_{i\in\cN}$ without requiring each agent $i\in\cN$ to project onto $\chi_i$. However, no rate results in terms of suboptimality, local infeasibility, and consensus violation exist for the 
distributed methods in \cite{chang2014distributed,mateos2015distributed,chang2014proximal,yuan2015regularized} when implemented for \emph{conic}
sets $\{\chi_i\}_{i\in\cN}$ studied in this paper. \rev{We assume that projection onto $\chi_i=g_i^{-1}(-\cK_i)$ is \emph{not} practical 
while computing a projection onto $\cK_i$ is
simple. Moreover, none of these four methods can exploit the strong convexity of \saa{$\{\varphi_i\}_{i\in\cN}$} 
and to the best of our knowledge,
DPDA and DPDA-TV proposed in this paper are one of the first decentralized algorithms to solve \eqref{eq:central_problem} with $\cO(1/k^2)$ ergodic rate guarantee on both sub-optimality and infeasibility which matches with the optimal rate for the centralized setting -- see \cite{ouyang2018lower} for the lower complexity bound of $\Omega(1/k^2)$ associated with first-order primal-dual methods for bilinear SP problems.
More precisely, when 
\saa{each $\varphi_i=\rho_i+f_i$ is 
strongly convex with smooth convex $f_i$ and closed convex $\rho_i$} for $i\in\cN$, our proposed method reduces the suboptimality and infeasibility with $\mathcal{O}(1/{k^2})$ rate as $k$, the number of primal-dual iterations, increases, and it requires $\cO(k)$ and $\cO(k\log(k))$ local communications 
in total when the network topologies are static and time-varying, respectively.
It is worth noting that, our results imply that DPDA-TV can compute a point in the intersection of closed convex sets with $\cO(1/k^2)$ rate for the solution error $\sum_{i\in\cN}\norm{x_i^k-x^*}^2$ -- in a decentralized way over time-varying directed communication networks, which is faster than $\cO(1/k)$ rate of Dykstra's algorithm, e.g., see~\cite{pang2019dykstra}. 
}

\saa{Finally, in case $\bar{\varphi}$ is \emph{merely} convex, i.e., strong convexity assumption is relaxed, we get $\cO(1/k)$ rate as a corollary of our results in this paper. Indeed, this would extend our earlier results in~\cite{aybat2016primal} from affine $g_i$ to nonlinear constrained setting.}
\textbf{Notation.} Throughout 
$\norm{\cdot}$ denotes either the Euclidean norm {or the spectral norm}. Given a convex set $\cS$, ${\sigma_{\cS}(\cdot)}$ {denotes} its support function, i.e., $\sigma_{\cS}(\theta)\triangleq\sup_{w\in \cS}\fprod{\theta,~w}$, $\ind{S}(\cdot)$ {denotes} the indicator function of $\cS$, i.e., $\ind{\cS}(w)=0$ for $w\in\cS$ and is $+\infty$ otherwise, and $\cP_{\cS}(w)\triangleq\argmin\{\norm{v-w}:\ v\in\cS\}$ denote the projection onto $\cS$. For a closed convex set $\cS$, we define the distance function as $d_{\cS}(w)\triangleq\norm{\cP_{\cS}(w)-w}$. Given a convex cone $\cK\subseteq\reals^m$, let $\mathcal{K}^*$ denote its dual cone, i.e., $\mathcal{K}^*\triangleq\{\theta\in\reals^{m}: \ \langle \theta,w\rangle \geq 0\ \ \forall w\in\mathcal{K}\}$, and $\cK^\circ\triangleq -\cK^*$ denote the polar cone of $\cK$. For a given cone $\cK\subset\reals^m$, $\sigma_{\cK}(\theta)=0$ for $\theta\in\cK^\circ$ and is $+\infty$ otherwise.
Given a convex function $g:\reals^n\rightarrow\reals\cup\{+\infty\}$, its convex conjugate is defined as $g^*(w)\triangleq\sup_{\theta\in\reals^n}\fprod{w,\theta}-g(\theta)$. $\otimes$ denotes the Kronecker product, $\one_n\in \mathbb{R}^{n}$ is the vector all ones, $\id_n$ is the $n\times n$ identity matrix. $\sy^n_{++}$ $(\sy^n_+)$ denotes the cone of symmetric positive (semi)definite matrices. For $Q\succ 0$, i.e., 
$Q\in\sy^n_{++}$, $Q$-norm is defined as $\norm{z}_Q\triangleq \sqrt{z^\top Q z}$.
Given $Q\in\sy^n_{+}$, $\lambda_{\min}^+(W)$ denotes the smallest positive eigenvalue of $Q$. $\Pi$ denotes the Cartesian product. Finally, for $\theta\in\reals^n$, we adopt $(\theta)_+\in\reals^n_{+}$ to denote $\max\{\theta, \textbf{0}\}$ where max is computed componentwise. $g(n)=\Theta(f(n))$ means $\exists n_0\in\integers_+$ and $c_1,c_2>0$ such that $c_1 f(n)\leq g(n)\leq c_2 f(n)$ for $n\geq n_0.$

\rev{Now, we state some of the assumptions and definitions.} 
\begin{defn}\label{def:lip-grad-convex}
Suppose $G:\reals^n\rightarrow \reals^m$ is 
differentiable on $\reals^n$ and $m\geq 1$. The Jacobian, 
\sa{$\bJ G:\reals^n\rightarrow\reals^{m\times n}$}, is \sa{$L_G$-Lipschitz 
if there exists $L_G\in\reals^{p\times n}$ for some $p\in\integers_{++}$ such that}
\begin{equation*}
\norm{\bJ G(x)-\bJ G(\bar{x})} \leq \sa{\norm{L_G(x-\bar{x})}} \quad \forall x,\bar{x}\in\reals^n;
\end{equation*}
\sa{moreover, when $L_G=L\,\id_{n}$ for some $L>0$, abusing the notation we say $\bJ G$ is $L$-Lipschitz.} \ehh{Furthermore, $g:\reals^n\to\reals$ satisfies the descent property with $L\in\reals^{n\times n}$ if
\begin{equation*}
g(x)-g(\bar x)-\fprod{\grad g(\bar x),x-\bar x}\leq \frac{L}{2}\norm{x-\bar x}_L^2, \quad \forall x,\bar x\in\reals^n.
\end{equation*}}%

Given a pointed convex cone $\cK\subset\reals^m$, $G$ is $\cK$-convex if $\lambda G(x)+(1-\lambda)G(\bar{x})\geq_{\cK} G(\lambda x+(1-\lambda)\bar{x})$ for all $x,\bar{x}\in\reals^n$ and $\lambda\in[0,1]$, where $a\geq_{\cK}b$ if and only if $a-b\in\cK$.
\end{defn}
\begin{remark}
\label{rem:K-convex}
If $G:\reals^n\rightarrow \reals^m$ is differentiable on $\reals^n$ and $\cK$-convex, then $G(x)\geq_{\cK} G(\bar{x})+\bJ G(\bar{x})(x-\bar{x}) \quad \forall x,\bar{x}\in\reals^n$.
\end{remark}

\begin{assumption}
\label{assump:lipschitz_f}
For $i\in\cN$, $f_i$ is differentiable on an open set containing $\dom \rho_i$ with an $L_{f_i}$-Lipschitz 
gradient $\grad f_i$ for some $L_{f_i}>0$. \saa{Define $\sa{L_{\max}(f)}\triangleq \max_{i\in\cN}L_{f_i}$ and { $\bar{L}\triangleq\sqrt{\sum_{i\in\cN}L_{f_i}^2/N}$.}} 
We also assume the prox map \saa{$\prox{\tau\rho_i}(\cdot)$ is \emph{efficiently} computable for any $\tau>0$}, where\vspace*{-1mm} 
{
\begin{equation}
\label{eq:prox}
\prox{\saa{\tau}\rho_i}(x)\triangleq\argmin_{y \in \reals^n} \left\{\saa{\tau}\rho_i(y)+\tfrac{1}{2}\norm{y-x}^2 \right\}.\vspace*{-2mm}
\end{equation}
}%
\end{assumption}
\begin{assumption}
\label{assump:saddle-point}
The duality gap for \eqref{eq:central_problem} is zero, and a primal-dual solution to~\eqref{eq:central_problem} exists.
\end{assumption}
A sufficient condition 
is the existence of a Slater point, i.e., there exists $\bar{x}\in \mathbf{relint}(\dom \bar{\varphi})$ such that $g_i(\bar{x})\in \mathbf{int}(-\cK_i)$ for $i\in\cN$, where $\dom\bar{\varphi}=\cap_{i\in\cN}\dom\varphi_i$.
\begin{assumption}
\label{assump:g_i}
For 
$i\in\cN$, $g_i:\reals^n\to\reals^{m_i}$ is $\cK_i$-convex and \sa{differentiable such that $\bJ g_i$ is $L_{g_i}$-Lipschitz for some $L_{g_i}>0$, and $\norm{\bJ g_i(x)}\leq C_{g_i}$ for all $x\in\dom \rho_i$ for some $C_{g_i}>0$.}
\saa{Let $\cX\triangleq\Pi_{i\in\cN}\reals^n$ and $\cX\ni\bx=[x_i]_{i\in\cN}$.
Define $G:
\cX\rightarrow \reals^m$ such that $G(\bx)\triangleq [g_i(x_i)]_{i\in\mathcal{N}}$ where $m\triangleq\sum_{i\in\cN}{m_i}$. Moreover, let $\bC=\diag([C_{g_i}\id_{m_i}]_{i\in\cN})$, $C_{\min}\triangleq \min_{i\in\cN} C_{g_i}$ and $L_{\max}(G)\triangleq \max_{i\in\cN}L_{g_i}$.}
\end{assumption}
\begin{defn}
\label{def:sconvex}
A differentiable function ${f}:\reals^n \rightarrow \reals$ is {\it strongly convex} 
with modulus $\mu>0$
if 
{
\begin{align*}
{f(x)\geq f(\bar{x})+\fprod{\nabla f(\bar{x}),x-\bar{x}}}+\tfrac{\mu}{2}\norm{x-\bar{x}}^2\quad \forall x,\bar{x}\in\reals^n.
\end{align*}}
\end{defn}
\begin{assumption}
\label{assump:sconvex}
${\bar{f}(x)}\triangleq\sum_{i\in\cN} f_i(x)$ is strongly convex with modulus $\bar{\mu}>0$; and each $f_i$ is strongly convex with modulus \saa{$\mu_i >0$} 
for $i\in\cN$, and define {$\ubar{\mu}\triangleq\min_{i\in\cN}\{\mu_i\}\saa{>0}$}. 
\end{assumption}
\begin{remark}
{Clearly $\bar{\mu}\geq \sum_{i\in\cN}\mu_i$ is always true
; moreover, $\bar{\mu}>0$ implies that $x^*$ is the unique optimal solution to \eqref{eq:central_problem}.}
\end{remark}
\vspace*{-5mm}
\subsection{Preliminary} \label{sec:pda}
In this section, we briefly present our recent work~\cite{hamedani2018primal}, where we proposed an accelerated primal-dual (APD) algorithm for solving convex-concave saddle-point (SP) problems. As it is shown in~\cite{hamedani2018primal}, 
APD can be viewed as an extension of the primal-dual algorithm proposed in~\cite{chambolle2015ergodic}, for solving bilinear SP problems, to a more general setting with a non-bilinear coupling term. Let {$\cX\subseteq \reals^{n_x}$ and $\cY\subseteq \reals^{n_y}$} be finite-dimensional vector spaces. Here we present {a slightly} extended
version of APD~\cite{hamedani2018primal} to solve the following problem:
{
\begin{equation}
\label{eq:saddle-problem}
\min_{\bx\in\cX}\max_{\by\in\cY}\cL(\bx,\by)\triangleq (\psi_x+\phi_x)(\bx)+\cH(\bx,\by)-(\psi_y+\phi_y)(\by),
\end{equation}}%
where $\psi_x$ and $\psi_y$ are, possibly non-smooth and convex, 
\sa{$\phi_x$ and $\phi_y$ are convex and 
differentiable on open sets containing \ehh{$\dom \psi_x$ and $\dom \psi_y$ satisfying the \emph{descent property} with $L_{\phi_x}\in\reals^{n_x\times n_x}$ and $L_{\phi_y}\in\reals^{n_y\times n_y}$, respectively;}} 
and $\cH:\cX\times\cY\to \reals$ is a continuously differentiable function that is convex in $\bx$ for any $\by\in\cY$, and concave in $\by$ for any $\bx\in\cX$. {We assume {$\psi_x$ and $\psi_y$} have convexity \sa{moduli} $\mu_x\geq 0$ and $\mu_y\geq 0$, respectively}.
\ehh{Moreover, for any $\bx\in\cX$, $\grad_\by \cH(\bx,\cdot)$ and $\grad_\bx \cH(\bx,\cdot)$ are $\reals_+\ni L_{yy}$-Lipschitz and \sa{$\reals^{p_{xy}\times n_y}_+\ni L_{xy}$-Lipschitz}, respectively, for some $p_{xy}\geq 1$; for any $\by\in\cY$, $\grad_\bx \cH(\cdot,\by)$ and $\grad_\by\cH(\cdot,\by)$ are $\reals_+\ni L_{xx}$- and $\reals^{p_{yx}\times n_x}_+\ni L_{yx}$-Lipschitz, respectively, for some $p_{yx}\geq 1$.}
Given 
\sa{$\{\bQ_x^k,\bQ_y^k,\eta^k\}_{k\geq 0}\subset\mathbb{S}^{n_x}_{++}\times\mathbb{S}^{n_y}_{++}\times\reals_{++}$} and initial iterates $\bx^0,\by^0$, the slightly 
\saa{modified} version\footnote{\sa{APD in~\cite{hamedani2018primal} solves \eqref{eq:saddle-problem} without smooth $\phi_x$ and $\phi_y$, and assuming $\mu_y=0$.}} of APD iterations consists of
{
\begingroup
\begin{subequations}
\label{eq:PDA}
\begin{align}
&\bp^k \gets  ~(1+\eta^k)\grad_\by \cH(\bx^k,\by^k)-\eta^k \grad_\by \cH(\bx^{k-1},\by^{k-1}), \nonumber \\
& \ehh{\by^{k+1}\gets\eyy{\argmin_{\by}}~\psi_y(\by)+\fprod{\grad\phi_y(\by^k)-\bp^k,\by}+\frac{1}{2}\norm{\by-\by^k}^2_{\bQ_y^k},}\label{eq:PDA-y}\\
&\bx^{k+1}\gets  \argmin_{\bx}  \psi_x(\bx)+\fprod{\grad \phi_x(\bx^k)+\grad_\bx \cH(\bx^k,\by^{k+1}),~\bx} +\tfrac{1}{2}\|{\bx-{\bx}^k}\|_{\bQ_x^k}^2.
\label{eq:PDA-x}
\end{align}
\end{subequations}
\endgroup
}%
\sa{Based on the discussion in~\cite{hamedani2018primal},} if $\bQ_x^k$, $\bQ_y^k$, $\eta^k$ are chosen such that \sa{there exist some \sa{$c\geq 1$}, positive \emph{nondecreasing} $\{t^{k}\}_{k\geq 0}$ and $\{\alpha^k,\beta^k\}_{k\geq 0}\subset\reals_+$ satisfying\footnote{
The step-size condition presented here is a \saa{generalized} version of \cite{hamedani2018primal}.}}
{
\begin{subequations}\label{eq:general_step_cond}
\begin{align}
&\bQ_x^k+\mu_x\id_{n_x}\succeq \bQ_x^{k+1}/\eta^{k+1},\ \bQ_y^k+\mu_y\id_{n_y}\succeq \bQ_y^{k+1}/\eta^{k+1},\\ &\bQ_x^k-L_{\phi_x}\succeq \ehh{L_{xx}\bI_{n_x}}+\frac{1}{\alpha^{k+1}}L_{yx}^\top L_{yx},\\
&\bQ_y^k-L_{\phi_y}\succeq \sa{c}~\eta^k(\alpha^k+\beta^k)\id_{n_y}+\frac{1}{\beta^{k+1}}\ehh{L_{yy}^2\bI_{n_y}}, \label{eq:4th-cond}
\end{align}
\end{subequations}}%
and $\eta^{k+1}=\frac{t^k}{t^{k+1}}\in(0,1]$ for all $k\geq 0$ \saa{-- see \cite[Assumption 3]{hamedani2018primal} with $\delta=0$}, then \saa{\cite[Eq.(3.3)]{hamedani2018primal}} implies that 
{
\begin{align}\label{eq:DPA-rate}
&{\cW_K\Big(\cL(\bar{\bx}^K,\by)-\cL(\bx,\bar{\by}^K)\Big)+\tfrac{t^{K}}{2}\norm{\bx-{\bx}^K}_{\bQ_x^K}^2 +\tfrac{t^K}{2}\norm{\by-{\by}^K}_{\bQ_y^K-\eta^K(\alpha^K+\beta^K)\id_{n_y}}^2} \nonumber\\
&\quad \leq \tfrac{1}{2}\norm{\bx-\bx^0}_{\bQ_x^0}^2+\tfrac{1}{2}\norm{\by-\by^0}_{\bQ_y^0}^2
\end{align}}%
holds for all $\bx,\by\in\cX\times\cY$ and $K\geq 1$, where $\cW_K\triangleq\sum_{k=1}^{K}t^{k-1}$, $\bar{\bx}^{K}\triangleq \cW_K^{-1}\sum_{k=1}^{K}t^{k-1}\bx^k$ and $\bar{\by}^{K}\triangleq \cW_K^{-1}\sum_{k=1}^{K}t^{k-1}\by^k$ for $K\geq 1$.

In \saa{\cite[Theorem 2.2,~Part II]{hamedani2018primal}}, it is shown that when $\mu_x>0$, $\mu_y=0$, \ehh{$L_{yy}= 0$, and $L_{\phi_y}=\mathbf{0}$}, 
\sa{\eqref{eq:4th-cond} reduces to {$\bQ_y^k\succeq \eta^k\alpha^k\id_{n_y}$} for \sa{$\beta^k=0$ and $c=1$; hence,} choosing $\bQ_x^k=\frac{1}{\delta_x^k}\id_{n_x}$ and $\bQ_y^k=\frac{1}{\delta_y^k}\id_{n_y}$} such that $t^k=\frac{\delta_y^k}{\delta_y^0}$, {$\alpha^k=c_\delta/\delta_y^{k-1}$ for all $k\geq 0$ satisfy the conditions in \eqref{eq:general_step_cond} for $\{\delta_x^k,\delta_y^k,\eta^k\}_{k\geq 0}$ generated using the update rule: $\eta^{k+1}=1/\sqrt{1+\mu_x\delta_x^k}$, $\delta_x^{k+1}=\eta^{k+1}\delta_x^k$, and $\delta_y^{k+1}=\delta_y^k/\eta^{k+1}$} for all $k\geq 0$, for any $\delta_x^0,\delta_y^0>0$ such that 
\ehh{$\bI_{n_x}\sa{\succeq}\delta_x^0(L_{\phi_x}+L_{xx}\bI_{n_x}+ \delta_y^0 L^\top_{yx}L_{yx}/c_\delta)$} for some $c_\delta\in(0,1]$; moreover, the rule implies that ${\cW_k}=\Theta(k^2)$, $\delta_x^k=\Theta(1/k)$, $\delta_y^k=\Theta(k)$, \sa{and $\eta^k\in(0,1)$} for $k\geq 0$.

For convex optimization problem \eqref{eq:central_problem}, 
 $\cH(x,\by)=\sum_{i\in\cN}\fprod{g_i(x),y_i}=\sa{\fprod{g(x),\by}}$ where \sa{$g:x\mapsto [g_i(x)]_{i\in\cN}$}, $y_i$ is the dual variable corresponding to 
 $g_i(x)\in-\cK_i$ and $\by=[y_i]_{i\in\cN}$. \sa{Note $\norm{\grad_x\cH(x,\by)-\grad_x\cH(\bar{x},\by)}\leq L_g\norm{\by}\norm{x-\bar{x}}$ when $\bJ g$ is $L_g$-Lipschitz for some $L_g>0$. 
Hence, if 
$\{\by^k\}$ is a bounded sequence, then $L_{xx}$ exists along the trajectory of APD iterates and one can set $L_{xx}=L_g \sup_k\|\by^k\|$. 
Note that $L_{xx}$ also plays a role in primal step-size selection, and for different $\tau^0$ values, $\{\by^k\}$ will be different; hence, the dual sequence might imply a different $L_{xx}$ along the iterate trajectory. Indeed, due to close loop nature of these relations, proving the boundedness of dual sequence is a delicate issue.}
\subsection{Outline}
\rev{First, in Section~\ref{sec:static}, we develop a decentralized variant of APD\ehh{, DPDA,} for solving \eqref{eq:central_problem} over static communication networks. Each iteration of DPDA only requires one communication round among the nodes. We provide a convergence result in Theorem \ref{thm:static-error-bounds} for static networks.
Next, in Section~\ref{sec:dynamic}, we propose a decentralized algorithm DPDA-TV to solve \eqref{eq:central_problem} when the network topology is time-varying, and we extend our convergence results to time-varying case in Theorem~\ref{thm:dynamic-error-bounds}. {The results in Theorem~\ref{thm:dynamic-error-bounds} cannot be obtained from the results in \cite{hamedani2018primal} since the consensus constraint cannot be simply encoded as a single constraint; 
DPDA-TV is designed to 
\saa{deal with time-varying topology and/or directed edges,} and DPDA-TV can be analyzed as APD with inexact computations.} Finally, in Section~\ref{sec:numerics}, we test the performance of the proposed methods for finding the nearest point to the intersection of ellipsoids.}\vspace*{-2mm}
\section{A method for static networks}
\label{sec:static}
We here discuss how 
APD, stated in \eqref{eq:PDA}, can be implemented to compute an 
\sa{$\epsilon$-optimal} solution to \eqref{eq:central_problem} in a distributed way using only $\cO(1/\sqrt{\epsilon})$ local communications over a \emph{static} communication network $\cG$. 
{Let $\cG=(\cN,\cE)$ denote a \emph{connected} undirected graph of $N$ computing nodes, where $\cN\triangleq\{1,\ldots,N\}$ and $\mathcal{E}\subseteq \mathcal{N}\times \mathcal{N}$ denotes the set of edges -- without loss of generality assume that $(i,j)\in \mathcal{E}$ implies $i< j$. Suppose nodes $i$ and $j$ can exchange information only if $(i,j) \in \cE$. For $i \in \mathcal{N}$, let $\mathcal{N}_i\triangleq\{j\in\mathcal{N} : (i, j) \in \mathcal{E} \text{ or } (j, i) \in \mathcal{E}\}$ denote the set of neighboring nodes and $d_i\triangleq |\mathcal{N}_i|$ is its degree, 
{and $d_{\max}\triangleq \max_{i\in\cN}d_i$}. \sa{In the rest, we use $N=|\cN|$}.}

Let $x_i\in\reals^n$ denote the \emph{local} decision vector of node $i\in\cN$. By taking advantage of the fact that $\cG$ is {\it connected}, we can reformulate \eqref{eq:central_problem} as a 
{\it consensus} optimization problem:
{
\begin{equation}
\label{eq:dist_problem}
\min_{\substack{x_i\in\reals^n,\ i\in\cN}}\Big\{\sum_{i\in \mathcal{N}}{\varphi_i(x_i)}~|
\begin{array}{c}
  x_i=x_j:~\lambda_{ij},\ \forall (i,j)\in \mathcal{E}, \\
  g_i(x_i) \in -\mathcal{K}_i:\ \theta_i,\ \forall{i}\in\mathcal{N}
\end{array}
\Big\},
\end{equation}}%
where $\lambda_{ij}\in\reals^n$ and $\theta_i\in\reals^{m_i}$ are the corresponding dual variables. Let $\bx=[x_i]_{i\in\cN}\in\reals^{n|\cN|}$. The consensus constraints $x_i=x_j$ for $(i,j)\in \mathcal{E}$ can be formulated as $M{\bf x}=0$, where $M\in \mathbb{R}^{n|\mathcal{E}|\times n|\mathcal{N}|}$ is a block matrix such that $M=H\otimes \id_n$ where $H$ is the oriented edge-node incidence matrix, i.e., the entry $H_{(i,j),l}$, corresponding to edge $(i,j)\in \mathcal{E}$ and node $l\in \mathcal{N}$, is equal to $1$ if $l=i$, $-1$ if $l=j$, and $0$ otherwise.
Note that $M^\T M=H^\T H\otimes \id_n=\Omega\otimes \id_n$, where $\Omega\in \mathbb{R}^{|\mathcal{N}|\times |\mathcal{N}|}$ denotes the graph Laplacian of $\cG$, i.e., $\Omega_{ii}=d_i$, $\Omega_{ij}=-1$ if $(i,j)\in\cE$ or $(j,i)\in\cE$, and equal to $0$ otherwise. 
\begin{defn}
A weighted Laplacian matrix $W\in\mathbb{S}_+^{|\cN|}$ is such that $W_{ij}=W_{ji}<0$ for $(i,j)\in\cE$, $W_{ij}=W_{ji}=0$ for $(i,j)\notin \cE$, and $W_{ii}=-\sum_{j\in\cN}W_{ij}$ for $i\in\cN$.
\end{defn}
\begin{remark}
\label{rem:sc}
\saa{While $\bar{f}(x)=\sum_{i\in\cN} f_i(x)$ is strongly convex with modulus $\bar{\mu}>0$, according to Assumption~\ref{assump:sconvex},
$f(\bx)\triangleq\sum_{i\in\cN}f_i(x_i)$ is strongly convex with modulus $\ubar{\mu}>0$ such that
$\ubar{\mu}=\min_{i\in\cN}\mu_i<\bar{\mu}$.}
\end{remark}
We define some notations to facilitate the analysis. 
\begin{defn}
\label{def:problem-components-static}
Let $\cX\triangleq\Pi_{i\in\cN}\reals^n$ and $\cX\ni\bx=[x_i]_{i\in\cN}$; define ${\varphi}:\cX\rightarrow\reals\cup\{\infty\}$ such that $\varphi(\bx)\triangleq\rho(\bx)+f(\bx)$ where $\rho(\bx)\triangleq\sum_{i\in\cN}\rho_i(x_i)$ and $f(\bx)\triangleq\sum_{i\in\cN}f_i(x_i)$.

Let $\cY\triangleq\Pi_{i\in\cN}\reals^{m_i}\times\reals^{n|\cE|}$,  $\cY\ni\by=[\btheta^\top; \blambda^\top]^\top$ such that $\btheta=[\theta_i]_{i\in\cN}\in\reals^m$ and $\blambda=[\lambda_{ij}]_{(i,j)\in\cE}\in\reals^{m_0}$, where $m\triangleq\sum_{i\in\cN}{m_i}$, and {$m_0\triangleq n|\cE|$}. 
Define $h:\cY\rightarrow\reals\cup\{\infty\}$ such that $h(\by)\triangleq \sum_{i\in\cN}\sigma_{-\cK_i}(\theta_i)$ and \saa{let $G:
\cX\rightarrow \reals^m$ be 
as in Assumption~\ref{assump:g_i}}.\vspace*{-2mm}
\end{defn}
\saa{One can reformulate \eqref{eq:dist_problem} as an SP problem:}
{
\begin{align}\label{eq:static-saddle}
\min_{{\bf x}}\max_{\by}\mathcal{L}&({\bf x}, \by)\triangleq 
\langle \blambda, M{\bf x}\rangle +\sum_{i\in\mathcal{N}}\big(\varphi_i(x_i)+\langle \theta_i,g_i(x_i) \rangle -\sigma_{-\cK_i}(\theta_i) \big).
\end{align}}%
Indeed,
one can compute a primal-dual 
solution to \eqref{eq:central_problem} \saa{through computing a saddle point of $\cL:\cX\times\cY\to\reals\cup\{\pm \infty\}$.}
\saa{As discussed 
in Remark~\ref{rem:sc}, $f$ is $\ubar{\mu}$-strongly convex for some $\ubar{\mu}>0$.}
\sa{Therefore, \saa{for any $\mu\in(0, \ubar{\mu}]$}, 
\eqref{eq:static-saddle} is a special case of \eqref{eq:saddle-problem} where \saa{$\psi_x=\rho+\frac{1}{2}{\mu}\|\cdot\|^2$ with $\mu_x={\mu}$, $\phi_x=f-\frac{1}{2}{\mu}\|\cdot\|^2$}, $\psi_y=h$ with $\mu_y=0$, $\phi_y=0$, and $\cH(\bx,\by)=\fprod{\blambda,M\bx}+\fprod{\btheta,G(\bx)}$.}

\rev{According to the discussion in Section \ref{sec:pda}, in its current form \eqref{eq:PDA} prevents us from obtaining a bound on dual iterates, which is important to select a primal step-size sequence.
Indeed, 
in \eqref{eq:PDA}, $\bx^{k+1}$-update requires selecting a suitable step size $\tau^k$ based on $L_{xx}$. We should emphasize that the global Lipschitz constant $L_{xx}$ may lead to a highly conservative step-size choice and/or may not be available; that said, the theoretical guarantees would continue to hold as long as we have a uniform bound on the Lipschitz constants of 
$\grad_x\cH(\cdot,\by^{k+1})$ for all $k\geq 0$. However, arguing the existence of a uniform bound through induction is not clear for the current algorithmic form using the two tools at hand: 
\textbf{(i)} one can get a bound on the Lipschitz constant of $\grad_x\cH(\cdot,\by)$ using $\norm{\by}$; 
\textbf{(ii)} conversely, if one knows a bound on the Lipschitz constant of $\grad_x\cH(\cdot,\by^{k+1})$, one can also obtain a bound on $\norm{\by^{k+1}}$ as a byproduct of the analysis in \cite{hamedani2018primal}. The reason why simple induction technique for constructing a uniform bound on $\{\norm{\by^k}\}_{k\geq 0}$ fails is that 
one needs the Lipschitz constant of $\grad_x\cH(\cdot,\by^{k+1})$ to bound $\norm{\by^{k+1}}$; but, it depends 
on $\by^{k+1}$ at iteration $k$. One way to break this circular argument 
is to perform an $\bx$-update first followed by a $\by$-update. Thus, at iteration $k$, 
one will need a bound on the Lipschitz constant of 
$\grad_x \cH(\cdot,\by^k)$ which can be obtained using $\norm{\by^k}$. Therefore, the induction can be used to get a bound on $\norm{\by^{k+1}}$ in terms of $\by^k$; consequently, one can get a uniform bound on the dual iterate sequence. In this paper, we will use this idea to prove the desired convergence properties of the proposed algorithm for distributed constrained optimization.}

First, multiplying \eqref{eq:saddle-problem} by $-1$ \sa{leads to $\min_{\by}\max_{\bx}\ehh{-\cL(\bx,\by)}$}, one can implement APD in~\eqref{eq:PDA} on this equivalent formulation, which in effect can be obtained by interchanging the roles of $\bx$-variable with $\by$-variable in~\eqref{eq:PDA}. 
The 
resulting algorithm, which can solve \eqref{eq:saddle-problem}, is given below:%
{
\begin{subequations}
\label{eq:PDA-2}
\begin{flalign}
&\bp^k \gets  ~(1+\eta^k)\grad_\bx \cH(\bx^k,\by^k)-\eta^k \grad_\bx \cH(\bx^{k-1},\by^{k-1}), \nonumber \\
&\bx^{k+1}\gets  \argmin_{\bx}  \psi_x(\bx)+\langle\grad \phi_x(\bx^k)+\bp^k,~\bx\rangle +\tfrac{1}{2}\|\bx-{\bx}^k\|_{\bQ_x^k}^2, \label{eq:PDA-x-2} \\
& \ehh{\by^{k+1}\gets\eyy{\argmin_{\by}}~\psi_y(\by)+\fprod{\grad\phi_y(\by^k)-\grad_{\by}\cH(\bx^{k+1},\by^k),\by}+\tfrac{1}{2}\norm{\by-\by^k}^2_{\bQ_y^k},}\label{eq:PDA-y-2}
\end{flalign}
\end{subequations}
}%
where $\{\bQ_x^k,\bQ_y^k,\eta^k\}_k$ are chosen such that \sa{there exist some \sa{$c\geq 1$}, positive sequences $\{t^k\}$ and $\{\alpha^k,\beta^k\}$ satisfying}
{
\begin{subequations}\label{eq:reverse_general_step_cond}
\begin{align}
&\bQ_y^k+\mu_y\id_{n_y}\succeq \bQ_y^{k+1}\sa{/}\eta^{k+1},~ \bQ_x^k+\mu_x\id_{n_x}\succeq \bQ_x^{k+1}\sa{/}\eta^{k+1},\\ &\bQ_y^k-L_{\phi_y}\succeq \ehh{L_{yy}\bI_{n_y}}+\frac{L_{xy}^\top L_{xy}}{\alpha^{k+1}},\\
&\bQ_x^k-L_{\phi_x}\succeq \sa{c}~\eta^{k}(\alpha^{k}+\beta^{k})\id_{n_x}+\ehh{L_{xx}^2\bI_{n_x}}/{\beta^{k+1}}, \label{eq:cond3}
\end{align}
\end{subequations}}%
and $\eta^{k+1}=\frac{t^k}{t^{k+1}}\sa{\in(0,1]}$. \sa{If \eqref{eq:reverse_general_step_cond} holds, then APD in \eqref{eq:PDA-2} converges with a rate result analogous to \eqref{eq:DPA-rate}.}

\saa{Clearly, $L_{\phi_x}=\diag([(L_{f_i}-{\mu})\id_n]_{i\in\cN})
\preceq L_{\max}(f)\id_{nN}$.} 
\sa{Note $\grad_{\bx}\cH(\cdot,\by)$ is $L_{xx}(\btheta)$-Lipschitz where $L_{xx}(\btheta)=\norm{\btheta}\diag([L_{g_i}\id_n]_{i\in\cN})$. Provided that $\|\btheta^k\|\leq B$ for $k\geq 0$ for some $B>0$, we can bound 
\ehh{$L_{xx}(\btheta)\preceq L_{xx}\bI_{nN}$ with $L_{xx}=BL_{\max}(G)$.}
On the other hand, if $G$ is a linear function, then \ehh{$L_{xx}=0$}.} Moreover, $L_{xy}=[\bC~M^\top]$, and since $\|{M M^\top}\|=\|{M^\top M}\|\leq 2d_{\max}$, we get
$L_{xy}^\top L_{xy}\preceq 2\diag([\bC^2~2d_{\max}\id_{m_0}]^\top)$. \sa{Clearly, \ehh{$L_{\phi_y}=\mathbf{0}$ and $L_{yy}=0$}.}

\sa{For $k\geq 0$, given some $\tilde\tau^k,\gamma^k>0$} and $\kappa_i^k>0$ for $i\in\cN$, let
{
\begin{align}
\label{eq:QxQy}
\bQ_x^k=\frac{1}{\tilde{\tau}^k}\id_{nN}, \quad \bQ_y^k=\bD^k_{\kappa,\gamma},
\end{align}}%
where $\bD^k_{\gamma}\triangleq\frac{1}{\gamma^k}\id_{m_0}$, $\bD^k_{\kappa}\triangleq\diag([\frac{1}{\kappa_i^k}\id_{m_i}]_{i\in\cN})$,
and { $\bD^k_{\kappa,\gamma}\triangleq \begin{bmatrix} \bD^k_{\kappa}& \zero \\ \zero & \bD^k_{\gamma} \end{bmatrix}$}.
\sa{Hence, \saa{for any $\mu\in(0, \ubar{\mu}]$}, given the initial iterates $\bx^0$ and $\by^0=[{\btheta^0}^\top; {\blambda^0}^\top]^\top$,
implementing APD in \eqref{eq:PDA-2} on \eqref{eq:static-saddle} leads to the following iterations:}
{
\setlength{\arraycolsep}{0.0em}
\begin{subequations}
\label{eq:pock-pd}
\begin{align}
&{\bf p}^k\gets M^\top((1+\eta^k)\blambda^{k}-\eta^k\blambda^{k-1})+ (1+\eta^k)\bJ G(\bx^k)^\top\btheta^{k} -\eta^k\bJ G(\bx^{k-1})^\top\btheta^{k-1}, \\
&{\bf x}^{k+1}\gets\argmin_{{\bf x}} \saa{\rho(\bx)+\tfrac{1}{2}{\mu}\norm{\bx}^2+\langle \nabla {f(\bx^k)}-{\mu}\bx^k+\bp^k,\bx \rangle} +\tfrac{1}{ 2\tilde\tau^k}\|\bx-\bx^k\|^2, \label{eq:pock-pd-x}\\
&\theta_i^{k+1}\gets\argmin_{\theta_i}\sigma_{-\cK_i}(\theta_i) -\langle g_i(x_i^{k+1}),\theta_i\rangle  +\tfrac{1}{ 2\kappa_i^k}\|\theta_i-\theta_i^k\|^2,   \quad  i\in\cN, \label{eq:pock-pd-theta}\\
&\blambda^{k+1}\gets \argmin_{\blambda} -\langle M\bx^{k+1},\blambda \rangle + \tfrac{1}{ 2\gamma^k}\|\blambda-\blambda^k\|^2. \label{eq:pock-pd-lambda} 
\end{align}
\end{subequations}
\setlength{\arraycolsep}{5pt}
}%
\sa{For $k\geq 0$, defining {$\tau^k\triangleq (\tfrac{1}{\tilde\tau^k}+{\mu})^{-1}$},
\eqref{eq:pock-pd-x} can be equivalently written as follows:}
{
\begin{align}\label{eq:new-x-update}
{\bf x}^{k+1}\gets\argmin_{{\bf x}} \rho(\bx)+\langle \saa{\nabla f(\bx^k)}+\bp^k,\bx \rangle +\tfrac{1}{ 2{\tau}^k}\|\bx-\bx^k\|^2.
\end{align}}%
The following lemma specifies a particular choice of parameters satisfying the step-size conditions \sa{in \eqref{eq:reverse_general_step_cond}.}
\begin{lemma}\label{lem:step-static}
\sa{Let \saa{$\mu\in(0, \ubar{\mu}]$.}
Given $\delta,\gamma^0>0$, let $\{\tilde\tau^k,\gamma^k\}_{k\geq 0}$ and $\{\kappa_i^k\}_{k\geq 0}$ for $i\in\mathcal{N}$ be the step-size sequences for~\eqref{eq:pock-pd}
such that $\eta^0=0$, $\kappa_i^0=\gamma^0\tfrac{\delta}{C_{g_i}^2}$ for $i\in\cN$,}\vspace*{-2mm} 
\sa{
\begin{align*}
\tilde{\tau}^0=\big(L_{\max}\nsa{(f)}+2[
2\gamma^0(2d_{\max}+\delta)+B L_{\max}(G)]\big)^{-1}
\end{align*}}%
for some $B>0$ and generated by using the update rule, 
\sa{
\begin{subequations}\label{eq:stepsize-rule-static}
\begin{align}
&\tau^k\gets(\frac{1}{\tilde{\tau}^k}+\mu)^{-1},\ \gamma^{k+1}\gets\gamma^k\sqrt{1+\mu\tilde{\tau}^k},\  \eta^{k+1}\gets\gamma^k/\gamma^{k+1},\\
&\tilde\tau^{k+1}\gets \tilde\tau^k\eta^{k+1},\ \kappa_i^{k+1}\gets\gamma^{k+1}\tfrac{\delta }{C_{g_i}^2}, i\in\cN.
\end{align}
\end{subequations}}%
for all $k\geq 0$. \sa{If $G$ is affine, then $B L_{\max}(G)$ term disappears and the rule in~\eqref{eq:stepsize-rule-static} satisfies \eqref{eq:reverse_general_step_cond} with $Q_x^k$ and $Q_y^k$ as in \eqref{eq:QxQy} for $k\geq 0$, and 
$\gamma^k=\Theta(k)$ and {$\tilde{\tau}^k=\Theta(1/k)$}. \nsa{For general $G$,} these results hold provided that $\|\btheta^k\|\leq B$ holds for $k\geq 0$.}
\end{lemma}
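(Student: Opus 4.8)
The plan is to exhibit auxiliary sequences $\{t^k\}_{k\geq0}$, $\{\alpha^k,\beta^k\}_{k\geq0}\subset\reals_+$ and a constant $c\geq1$ for which the chosen $\bQ_x^k,\bQ_y^k,\eta^k$ satisfy every inequality in \eqref{eq:reverse_general_step_cond} together with $\eta^{k+1}=t^k/t^{k+1}\in(0,1]$, and then to read off the growth of $\gamma^k$ and $\tilde{\tau}^k$ from the recursion. Recall from the discussion preceding the lemma that \eqref{eq:static-saddle} is an instance of \eqref{eq:saddle-problem} with $\mu_x=\mu$, $\mu_y=0$, $L_{\phi_y}=L_{yy}=\mathbf{0}$, $L_{\phi_x}\preceq(L_{\max}(f)+2\alpha d_{\max})\id_{nN}$, $L_{xy}^\top L_{xy}\preceq2\,\diag([\bC^2,\ 2d_{\max}\id_{m_0}])$, and, along the iterate trajectory, $L_{xx}\preceq BL_{\max}(G)\id_{nN}$ whenever $\|\btheta^k\|\leq B$ for all $k$ (and $L_{xx}=\mathbf{0}$, so the term $L_{xx}^\top L_{xx}/\beta^{k+1}$ simply drops, when $G$ is affine). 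I will take $c=1$, $t^k=\gamma^k$, $\beta^k\equiv BL_{\max}(G)$, and $\alpha^k=2\gamma^k(2d_{\max}+\delta)$ for all $k\geq0$. Since $\tilde{\tau}^k\geq0$ gives $\sqrt{1+\mu\tilde{\tau}^k}\geq1$, the rule makes $\{\gamma^k\}$ positive and nondecreasing, so $\{t^k\}$ is positive nondecreasing and $\eta^{k+1}=\gamma^k/\gamma^{k+1}=t^k/t^{k+1}\in(0,1]$.

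The first step is the bookkeeping identity $\tilde{\tau}^k\gamma^k=\tilde{\tau}^0\gamma^0$ for all $k\geq0$, immediate from $\tilde{\tau}^{k+1}=\tilde{\tau}^k\eta^{k+1}$ and $\gamma^{k+1}=\gamma^k/\eta^{k+1}$; likewise $\kappa_i^k=\gamma^k\delta/C_{g_i}^2$ directly from the rule. Using $\gamma^{k+1}\eta^{k+1}=\gamma^k$ and $\kappa_i^{k+1}\eta^{k+1}=\kappa_i^k$ one gets $\bQ_y^{k+1}/\eta^{k+1}=\bQ_y^k$ exactly, so the $\bQ_y$-part of the first line of \eqref{eq:reverse_general_step_cond} holds with equality; the $\bQ_x$-part, $\bQ_x^k+\mu\id_{nN}\succeq\bQ_x^{k+1}/\eta^{k+1}$, reduces to the scalar inequality $\eta^{k+1}\geq(1+\mu\tilde{\tau}^k)^{-1/2}$, which again holds with equality by the rule. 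For the second line, since $L_{\phi_y}=L_{yy}=\mathbf{0}$ it suffices that $\bQ_y^k\succeq\tfrac{2}{\alpha^{k+1}}\diag([\bC^2,\ 2d_{\max}\id_{m_0}])$; examining the $\theta$- and $\lambda$-blocks this amounts to $\alpha^{k+1}\geq2\gamma^k\delta$ and $\alpha^{k+1}\geq4d_{\max}\gamma^k$, both implied by $\alpha^{k+1}=2\gamma^{k+1}(2d_{\max}+\delta)\geq2\gamma^k(2d_{\max}+\delta)$ as $\{\gamma^k\}$ is nondecreasing.

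The remaining and most delicate requirement is \eqref{eq:cond3}. Since $L_{\phi_x}\preceq(L_{\max}(f)+2\alpha d_{\max})\id_{nN}$, it suffices to verify the scalar inequality $\tfrac{1}{\tilde{\tau}^k}\geq(L_{\max}(f)+2\alpha d_{\max})+\eta^k(\alpha^k+\beta^k)+\tfrac{(BL_{\max}(G))^2}{\beta^{k+1}}$. For $k=0$ this reads $\tfrac{1}{\tilde{\tau}^0}\geq(L_{\max}(f)+2\alpha d_{\max})+BL_{\max}(G)$ since $\eta^0=0$, which holds for the stated value of $\tilde{\tau}^0$, with room to spare. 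For $k\geq1$ I will use $\eta^k\alpha^k=2\gamma^{k-1}(2d_{\max}+\delta)$, $\eta^k\beta^k\leq\beta^k=BL_{\max}(G)$ and $(BL_{\max}(G))^2/\beta^{k+1}=BL_{\max}(G)$, so the right-hand side is at most $(L_{\max}(f)+2\alpha d_{\max}+2BL_{\max}(G))+2\gamma^{k-1}(2d_{\max}+\delta)$, whereas the identity $\tilde{\tau}^k\gamma^k=\tilde{\tau}^0\gamma^0$ and the choice of $\tilde{\tau}^0$ give $\tfrac{1}{\tilde{\tau}^k}=\tfrac{\gamma^k}{\gamma^0}(L_{\max}(f)+2\alpha d_{\max}+2BL_{\max}(G))+4\gamma^k(2d_{\max}+\delta)$, which dominates the former because $\gamma^k\geq\gamma^{k-1}\geq\gamma^0$; when $G$ is affine the $BL_{\max}(G)$ terms vanish and the same computation applies with $\beta^k\equiv0$. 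Finally, writing $\varrho\triangleq\tilde{\tau}^0\gamma^0$, the identity yields $\tilde{\tau}^k=\varrho/\gamma^k$ and hence $\gamma^{k+1}=\sqrt{(\gamma^k)^2+\mu\varrho\gamma^k}$, so $\gamma^{k+1}-\gamma^k=\mu\varrho\gamma^k/(\gamma^{k+1}+\gamma^k)\in(0,\tfrac{\mu\varrho}{2}]$; the upper bound gives $\gamma^k\leq\gamma^0+\tfrac{\mu\varrho}{2}k$, and since the increments stay bounded away from zero as long as $\gamma^k$ is bounded we get $\gamma^k\to\infty$, hence $\gamma^{k+1}/\gamma^k\to1$, $\gamma^{k+1}-\gamma^k\to\tfrac{\mu\varrho}{2}$, and therefore $\gamma^k=\Theta(k)$, $\tilde{\tau}^k=\varrho/\gamma^k=\Theta(1/k)$ — precisely the behaviour recorded for the analogous update rule in \cite{hamedani2018primal}.

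The step I expect to be the main obstacle is \eqref{eq:cond3}: one must ensure that the \emph{growing} quantity $1/\tilde{\tau}^k=\gamma^k/(\tilde{\tau}^0\gamma^0)$ stays above $\eta^k\alpha^k$ — which also grows like $\gamma^k$ — plus the fixed term $L_{\max}(f)+2\alpha d_{\max}$ plus the $L_{xx}$-dependent term \emph{for every} $k$, not merely asymptotically; this is exactly what forces the conservative initialization of $\tilde{\tau}^0$ and, in the non-affine case, is where the standing hypothesis $\|\btheta^k\|\leq B$ enters, both through $L_{xx}\preceq BL_{\max}(G)\id_{nN}$ and through the matching choice $\beta^k\equiv BL_{\max}(G)$.
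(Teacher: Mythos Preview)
Your proof is correct and follows essentially the same route as the paper's: specialize the Lipschitz bounds, pick $t^k\propto\gamma^k$, $\beta^k\equiv BL_{\max}(G)$, and $\alpha^k\propto\gamma^k(2d_{\max}+\delta)$, then verify \eqref{eq:reverse_general_step_cond} block by block and read off the growth of $\gamma^k,\tilde\tau^k$. The only cosmetic differences are that the paper takes $c=2$ and $\alpha^{k+1}=2\gamma^{k}(2d_{\max}+\delta)$ and then argues \eqref{eq:cond3} by induction on $k$, whereas you take $c=1$, $\alpha^{k}=2\gamma^{k}(2d_{\max}+\delta)$, and use the invariant $\tilde\tau^k\gamma^k=\tilde\tau^0\gamma^0$ directly; your version is in fact a little cleaner and yields the same conclusion.
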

\begin{proof}
For $k\geq 0$, replacing the Lipschitz parameters in \eqref{eq:reverse_general_step_cond} with their bounds discussed in the paragraph after \eqref{eq:reverse_general_step_cond}, setting \sa{$c=2$, $t^k=\gamma^k/\gamma^0$} and $\beta^k=B L_{\max}(G)$ within \eqref{eq:reverse_general_step_cond} and bounding $\eta^{k}\beta^k$ with $\beta^k$ in~\eqref{eq:cond3} (since $\eta^k\in(0,1]$), we obtain
\sa{sufficient conditions on the step-size sequences of \eqref{eq:pock-pd}}, i.e., 
{
\begin{align*}
&\frac{1}{\tilde\tau^k}+\mu\geq \frac{1}{\tilde\tau^{k+1}\eta^{k+1}},\ \frac{1}{\tilde\tau^k}\geq L_{\max}\nsa{(f)}+\sa{2}\big(
\alpha^{k}\eta^{k}+B L_{\max}(G)\big)\\
&\eta^{k+1}=\frac{\gamma^k}{\gamma^{k+1}},\  \bD_{\kappa,\gamma}^k\succeq \frac{2}{\alpha^{k+1}}\begin{bmatrix}\bC^2 & \mathbf{0}\\ \mathbf{0} & 2d_{\max}\id_{m_0}\end{bmatrix},\ \bD_{\kappa,\gamma}^k\succeq \frac{1}{\eta^{k+1}}\bD_{\kappa,\gamma}^{k+1}.
\end{align*}}%
$\bD_{\kappa,\gamma}^k\succeq \frac{2}{\alpha^{k+1}}\diag([\bC^2 ~2d_{\max}\id_{m_0}]^\top)$ holds if $\bD_{\kappa}^k\succeq \frac{2}{\alpha^{k+1}}\bC^2$ and $\frac{1}{\gamma^k}\geq \frac{4}{\alpha^{k+1}}d_{\max}$ which are satisfied by choosing \sa{$\alpha^{k+1}=2\gamma^{k}(\delta+2d_\max)$}. Therefore, the last three conditions clearly hold for the update rule in~\eqref{eq:stepsize-rule-static}. Next, 
\eqref{eq:stepsize-rule-static} implies $\eta^{k+1}=1/\sqrt{1+\mu\tilde\tau^k}$ and $\tilde\tau^{k+1}=\eta^{k+1}\tilde\tau^k$; hence, ${\tilde\tau^{k+1}\eta^{k+1}}=\frac{\tilde\tau^k}{1+\mu\tilde\tau^k}=(\frac{1}{\tilde\tau^k}+\mu)^{-1}$. Using the fact that $\alpha^k\eta^k\leq 2\gamma^k(\delta+2d_{\max})$, the second condition holds if $\frac{1}{\tilde\tau^k}\geq L_{\max}\nsa{(f)}+2\big( 
2\gamma^k(\delta+2d_{\max})+BL_{\max}(G)\big)$. Now we will prove this inequality by induction on $k$. Clearly, for $k=0$, from the initialization of $\tilde\tau^0$ the condition holds. Suppose that the upper bound on $\frac{1}{\tilde\tau^k}$ holds, then using $\tilde\tau^{k+1}=\tilde\tau^k\eta^{k+1}$ and $\eta^{k}\in(0,1)$ the induction can be proved.
Finally, the proof of convergence rate for $\gamma^k,\tilde\tau^k$ is similar to the proof of \cite[Corollary~1]{chambolle2011first}; hence, the results follow by observing that $\tau^k=\Theta(\tilde\tau^k)$.\vspace*{-3mm}
\end{proof}
{\sa{Suppose $G$ is not affine,} we now show there exists $B>0$ such that 
if $\{\tilde\tau^k,\gamma^k\}_{k\geq 0}$ and $\{\kappa_i^k\}_{k\geq 0}$ for $i\in\mathcal{N}$ are selected as in Lemma~\ref{lem:step-static}, then $\|\btheta^k\|\leq B$ holds for $k\geq 0$. As discussed in Section \ref{sec:pda}, the idea is to use induction. Due to the limited space we briefly give the proof sketch and refer the reader to the proof of Theorem \ref{thm:dynamic-error-bounds} 
which shows the result for the dynamic network case. 
Suppose $\|\btheta^k\|\leq B$, for $k\in\{0,\hdots,K-1\}$ for some $K\geq 1$. Note if $(\bx^*,\btheta^*,\blambda^*)$ is a saddle-point of $\cL$ such that $\blambda^*\neq\mathbf{0}$, then it trivially follows that $(\bx^*,\btheta^*,\mathbf{0})$ is another saddle-point of $\mathcal{L}$. \sa{Starting from some $\bx^0$, $\btheta^0=\mathbf{0}$ and $\blambda^0=\mathbf{0}$,} iterations in \eqref{eq:pock-pd} only use $\{\|\btheta^k\|\}_{k=0}^{K-1}$ to compute $(\bx^K,\btheta^K,\blambda^K)$; and this information is sufficient to show a result analogous to \eqref{eq:DPA-rate}, corresponding to the problem in \eqref{eq:static-saddle}, and is obtained by switching $\bx$ and $\by$ in \eqref{eq:DPA-rate}. Evaluating the resulting inequality at $(\bx,\by)=(\bx^*,\by^*)$, where $\by^*=[{\btheta^*}^\top; \mathbf{0}^\top]^\top$, implies \vspace*{-1mm}
{
\begin{eqnarray*}
{\tfrac{\gamma^K}{2\sa{\gamma^0}}\|{\btheta^*-\btheta^K}\|_{\bD_\kappa^K}^2}\leq \tfrac{t^K}{2}\|\by^*-\by^K\|_{\bQ_y^K}^2\leq \sum_{i\in\mathcal{N}}\Big[\tfrac{1}{ 2\sa{\tilde{\tau}^0}}\|x_i^0-x^*\|^2+\tfrac{1}{\sa{2\kappa_i^0}}\|\theta^*_i\|^2\Big]\triangleq\bar{\Lambda}.
\end{eqnarray*}}%
Therefore, 
$\|\btheta^K\|\leq B$ holds for $B>0$ such that \vspace*{-1mm}
\saa{
\begin{equation}\label{eq:B-bound}
B\geq \norm{\btheta^*}+ \Big(
2A_0 B+A_1\Big)^{\tfrac{1}{2}},
\end{equation}}%
where the constants \saa{ $A_0\triangleq \frac{\gamma^0\delta L_{\max}(G)}{C_{\min}^2}\norm{\bx^*-\bx^0}^2$ and  $A_1\triangleq [{(4\gamma^0(\delta+2d_{\max})+L_{\max}{(f)})} 
\gamma^0\delta \norm{\bx^*-\bx^0}^2+ 
\norm{\btheta^*}_{\bC^2}^2]/C_{\min}^2$}. Since \eqref{eq:B-bound} is a quadratic inequality, there exists $\bar{B}>0$ such that \eqref{eq:B-bound} holds, which implies that $\|\btheta^k\|\leq B$ for $k\geq 0$ hold for any $B\geq \bar{B}$. Therefore, the conclusion of Lemma~\ref{lem:step-static} holds for all $B\geq \bar{B}$.\qed
}
\eh{
\begin{remark}
\label{rem:bound1}
We can explicitly characterize the dual bound $B$ satisfying \eqref{eq:B-bound}. In particular, \eqref{eq:B-bound} holds if $B\geq\bar{B}$ where
\begin{equation}
    \label{eq:bar_B}
    \bar{B}\triangleq A_0+\norm{\btheta^*}+\sqrt{A_0^2+2A_0\norm{\btheta^*}+A_1}.
\end{equation}
This bound depends on some parameters such as $\norm{\btheta^*}$ and $d_\max$. If a Slater point is available, using Lemma 6.1 in \cite{aybat2019distributed} one can compute a bound on $\norm{\btheta^*}$ in a distributed manner. Moreover, one can run a $\max$-consensus algorithm first among the nodes to compute $d_{\max}$ prior to running the algorithm.
\end{remark}}

Since $\cK_i$ is a cone, $\prox{\kappa_i^k\sigma_{-\cK_i}}(\cdot)=\cP_{\cK_i^*}(\cdot)$; hence, 
{
$
\theta_i^{k+1}=\cP_{\cK_i^*}\Big(\theta_i^k+\kappa_i^k g_i(x_i^{k+1})\Big) \nonumber
$
}%
for $i\in\cN$. From \eqref{eq:pock-pd-lambda} one can observe that $\blambda^{k+1}=\blambda^k+\gamma^k M{\bf x}^{k+1}$ which {we can write $\blambda^{k}$} as a partial sum of primal iterates $\{\bx^\ell\}_{\ell=0}^{k-1}$,
i.e., $\blambda^k=\blambda^0+\sum_{\ell=0}^{k-1} \gamma^\ell M{\bf x}^{\ell+1}$. Let $\blambda^0\gets \mathbf{0}$, and define $\{\bs^k\}_{k\geq 0}$ such that $\bs^0=\mathbf{0}$ and $\bs^{k+1}=\bs^k+ \gamma^k\bx^{k+1}$ for $k\geq 0$; hence, $\blambda^k=M\bs^{k}$ for $k\geq 0$. Using 
$M^\top M=\Omega\otimes \id_n$, we obtain
{
$\langle {\bf x},M^\top\blambda^{k} \rangle=~\langle {\bf x},~(\Omega \otimes \id_n)\bs^{k}\rangle
=\sum_{i\in\mathcal{N}}\langle { x_i},$ $~\sum_{j\in\mathcal{N}_i}(s^{k}_i-s^{k}_j)\rangle.$
}%
Thus, the iterations given in~\eqref{eq:pock-pd} for the static graph $\cG$ can be computed in a \emph{decentralized} way, via the node-specific computations as in distributed primal dual algorithm (DPDA) displayed in Fig.~\ref{alg:PDS}.
\begin{figure}
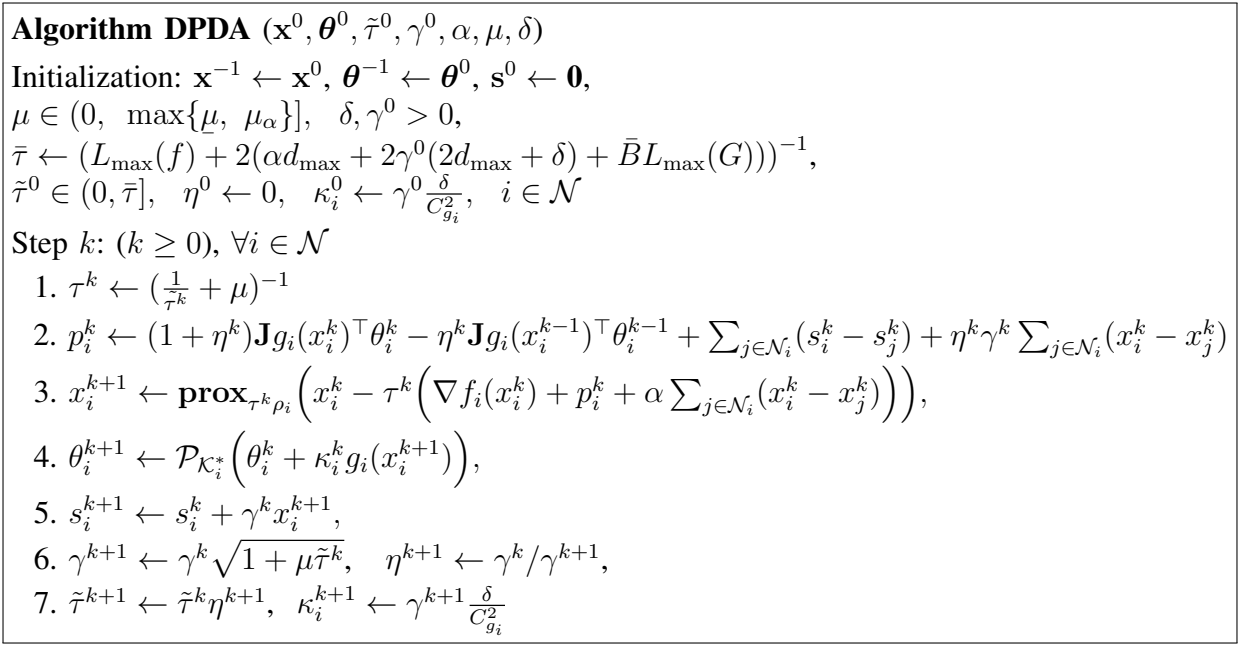

\centering
\framebox{\parbox{0.98\columnwidth}
{
\textbf{Algorithm DPDA} ($\bx^{0},\btheta^0,\gamma^0, 
\mu,\delta, \saa{B}$) \\[1.5mm]
Initialization: $\bx^{-1}\gets \bx^0$, \sa{$\btheta^{-1}\gets \btheta^0$}, $\bs^0\gets \textbf{0}$,\\
$\saa{\mu\in(0,~\ubar{\mu}]},
\ \ \delta ,\gamma^0>0$,\\
\saa{$\tilde{\tau}^0\gets \big(L_{\max}{(f)}+2( 
2\gamma^0(2d_{\max}+\delta)+{B}L_{\max}(G))\big)^{-1}$},\\
$
{\eta^0\gets 0},
\ \ \kappa_i^0\gets\gamma^0\frac{\delta }{C_{g_i}^2},\ \ i\in\cN$\\[1mm]
Step $k$: ($k \geq 0$), $\forall i\in\cN$\\[1mm]
\text{ } 1. $\tau^k\gets(\frac{1}{\tilde{\tau}^k}+\mu)^{-1}$\\[1mm]
\text{ } 2. $p_i^k\gets (1+\eta^k)\bJ \sa{g_i}(x_i^k)^\top \nsa{\theta_i^k}-\eta^k \bJ \sa{g_i}(x_i^{k-1})^\top \nsa{\theta_i^{k-1}}$\\
\text{ } \qquad \quad ~$+\sum_{j\in\cN_i}(s_i^k-s_j^k)+\eta^k\gamma^k\sum_{j\in\cN_i}(x_i^k-x_j^{k})$\\[1mm]
\text{ } 3. $x_i^{k+1}\gets\prox{\tau^k\rho_i}\Big(x_i^k-\tau^k\Big(\nabla f_i(x_i^k)+p_i^k\Big)\Big),$\\[1mm]
\text{ } 4. $\theta_i^{k+1}\gets\cP_{\cK_i^*}\Big(\theta_i^k+\kappa_i^k g_i(x_i^{k+1})\Big),$\\[1mm]
\text{ } 5. $s_i^{k+1}\gets  s_i^k+\gamma^k x_i^{k+1},$ \\[1mm]
\text{ } 6. $\gamma^{k+1}\gets\gamma^k\sqrt{1+\mu\tilde{\tau}^k}$, \ \ $\eta^{k+1}\gets\gamma^k/\gamma^{k+1}$,\\[1mm]
\text{ } 7. $\tilde\tau^{k+1}\gets \tilde\tau^k\eta^{k+1}$,\ \ $\kappa_i^{k+1}\gets\gamma^{k+1}\frac{\delta }{C_{g_i}^2}$}}
\caption{ Distributed Primal Dual Algorithm (DPDA)}
\label{alg:PDS}
\vspace*{-3mm}
\end{figure}

Next, 
we quantify the suboptimality and infeasibility of the DPDA iterate sequence. {The results follows from \eqref{eq:DPA-rate}.} 
\begin{theorem}\label{thm:static-error-bounds}
Suppose Assumption~\ref{assump:saddle-point} holds, 
\saa{let $\mu\in(0,~\ubar{\mu}]$}
and
\saa{$B\geq\min\{0,\bar{B}\}$ where $\bar{B}$ is defined in \eqref{eq:bar_B}.}
Let $\{\bx^k,\btheta^k\}_{k\geq 0}$ be the 
sequence generated by Algorithm DPDA, displayed in Fig.~\ref{alg:PDS}, initialized from an arbitrary $\bx^0$ and $\btheta^0=\mathbf{0}$. 
Then {$\{\bx^k\}_{k\geq 0}$ converges to $\bx^*=\ones\otimes x^*$ such that $x^*$ is the optimal solution to \eqref{eq:central_problem}};
moreover, the following bounds hold for all $K\geq 1$: \sa{$\norm{\bx^K-\bx^*}^2\leq \frac{\tilde{\tau}^K}{\gamma^K}4\gamma^0  \bar{\Lambda}=\cO(1/K^2)$}, \vspace*{-2mm}
{
\begin{align*}
&\max\Big\{|
\saa{\varphi(\bar{\bx}^K)}-\varphi({\bf x}^*)|,\|M\bar{\bf x}^K\|+\sum_{i\in\cN}\norm{\theta_i^*} d_{-\cK_i}(g_i(\bar{x}_i^K))\Big\} \leq \sa{\Lambda_0/ \cW_K=\cO(1/K^2),}
\end{align*}
}%
where $\cW_K=\sum_{k=1}^{K}{\gamma^{k-1}/\gamma^0}$, $\bar{\bx}^{K}=\cW_K^{-1}\sum_{k=1}^{K}{\frac{\gamma^{k-1}}{\gamma^0}}\bx^k$, and \sa{$\Lambda_0\triangleq \tfrac{1}{ 2\gamma^0}+\sum_{i\in\mathcal{N}}\tfrac{1}{ 2\saa{\tilde{\tau}^0}}\|x_i^0-x^*\|^2+\tfrac{2}{ \kappa_i^0}\|\theta^*_i\|^2$}.
\end{theorem}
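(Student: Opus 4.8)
The plan is to derive all three assertions from the ergodic bound \eqref{eq:DPA-rate}, read off for the iterates \eqref{eq:pock-pd} --- which are APD in the role-swapped form \eqref{eq:PDA-2} applied to the saddle-point reformulation \eqref{eq:static-saddle}, with $\psi_x=\rho+\tfrac{\mu}{2}\|\cdot\|^2$, $\phi_x=f_\alpha-\tfrac{\mu}{2}\|\cdot\|^2$, $\mu_x=\mu>0$, $\psi_y=h$, $\mu_y=0$, $\phi_y=0$, and $\cH(\bx,\by)=\langle\blambda,M\bx\rangle+\langle\btheta,G(\bx)\rangle$. First I would record the precise inequality we are entitled to use: since $\tilde\tau^0\in(0,\bar\tau]$, Lemma~\ref{lem:step-static} together with the dual-boundedness argument preceding \eqref{eq:tau_bar} guarantees $\|\btheta^k\|\le\bar B$ for all $k\ge0$ and that the step-size conditions \eqref{eq:reverse_general_step_cond} hold with $c=2$, $t^k=\gamma^k/\gamma^0$, $\alpha^k=2\gamma^{k-1}(\delta+2d_{\max})$ and $\beta^k=\bar B\,L_{\max}(G)$ (with $\beta^k\equiv0$ when $G$ is affine, in which case no boundedness is needed). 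Hence the analogue of \eqref{eq:DPA-rate} with the roles of $\bx$ and $\by$ interchanged --- so that the $\eta^K(\alpha^K+\beta^K)$ correction sits on the $\bx$-block --- holds for every $(\bx,\by)\in\cX\times\cY$ with $\by\in\dom h$ and every $K\ge1$. I would also fix a saddle point of $\cL$ of the special form $(\bx^*,\by^*)$ with $\bx^*=\ones\otimes x^*$ and $\by^*=[{\btheta^*}^\top\ \zero^\top]^\top$, which is legitimate because zeroing out $\blambda^*$ preserves the saddle property (as noted before \eqref{eq:B-bound}); recall $M\bx^*=\zero$, $\|\bx^*\|_{\Omega\otimes\id_n}=0$, and $\by^0=\zero$ by initialization.

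For the iterate bound, evaluate the rate inequality at $(\bx,\by)=(\bx^*,\by^*)$. The saddle property makes the gap $\cL(\bar{\bx}^K,\by^*)-\cL(\bx^*,\bar{\by}^K)\ge0$, and the $\by$-distance term is nonnegative, so both are discarded; the right-hand side collapses to $\tfrac{1}{2\tilde\tau^0}\|\bx^*-\bx^0\|^2+\tfrac12\sum_{i\in\cN}\tfrac{1}{\kappa_i^0}\|\theta_i^*\|^2=\bar\Lambda$ since $\by^0=\blambda^*=\zero$. It remains to lower bound the $\bx$-block weight. Using \eqref{eq:cond3} with $c=2$ together with the induction inequality $\tfrac{1}{\tilde\tau^K}\ge L_{\max}(f)+2(\alpha d_{\max}+2\gamma^K(\delta+2d_{\max})+\bar B\,L_{\max}(G))$ established inside the proof of Lemma~\ref{lem:step-static}, and the elementary facts $\eta^K\gamma^{K-1}=(\gamma^{K-1})^2/\gamma^K\le\gamma^K$ and $\eta^K\le1$ (the sequence $\{\gamma^k\}$ is strictly increasing), one gets $\eta^K(\alpha^K+\beta^K)\le\tfrac{1}{2\tilde\tau^K}$, hence $\bQ_x^K-\eta^K(\alpha^K+\beta^K)\id_{nN}\succeq\tfrac12\bQ_x^K=\tfrac{1}{2\tilde\tau^K}\id_{nN}$. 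With $t^K=\gamma^K/\gamma^0$ this yields $\|\bx^K-\bx^*\|^2\le\tfrac{4\gamma^0\tilde\tau^K}{\gamma^K}\bar\Lambda$, which is $\cO(1/K^2)$ because $\tilde\tau^K=\Theta(1/K)$ and $\gamma^K=\Theta(K)$ by Lemma~\ref{lem:step-static}; in particular $\bx^K\to\bx^*=\ones\otimes x^*$, and $x^*$ is the unique minimizer of \eqref{eq:central_problem} by strong convexity.

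For suboptimality and infeasibility I would plug two test points into the rate inequality, always discarding the nonnegative quadratic terms. Taking $(\bx,\by)=(\bx^*,\zero)$ gives $\cL(\bar{\bx}^K,\zero)=\Phi(\bar{\bx}^K)$ (using $\sigma_{-\cK_i}(0)=0$) and $\cL(\bx^*,\bar{\by}^K)\le\sup_\by\cL(\bx^*,\by)=\varphi(\bx^*)$ (feasibility of $x^*$, and $\langle\theta_i,g_i(x^*)\rangle\le0$, $\sigma_{-\cK_i}(\theta_i)=0$ on $\dom h$), so $\Phi(\bar{\bx}^K)-\varphi(\bx^*)\le\tfrac{1}{2\tilde\tau^0 N_K}\|\bx^*-\bx^0\|^2\le\Lambda_0/N_K$. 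For the reverse direction, $\varphi(\bx^*)=\cL(\bx^*,\by^*)\le\cL(\bar{\bx}^K,\by^*)=\Phi(\bar{\bx}^K)+\sum_{i\in\cN}\langle\theta_i^*,g_i(\bar{x}_i^K)\rangle$ (using $M\bx^*=\zero$, $\blambda^*=\zero$, $\sigma_{-\cK_i}(\theta_i^*)=0$, and complementary slackness for the first equality), and splitting $g_i(\bar{x}_i^K)$ into its $\cP_{-\cK_i}$-projection and the residual $u_i\triangleq g_i(\bar{x}_i^K)-\cP_{-\cK_i}(g_i(\bar{x}_i^K))$, together with $\theta_i^*\in\cK_i^*$ and $\cP_{-\cK_i}(g_i(\bar{x}_i^K))\in-\cK_i$, gives $\varphi(\bx^*)-\Phi(\bar{\bx}^K)\le\sum_{i\in\cN}\|\theta_i^*\|\,d_{-\cK_i}(g_i(\bar{x}_i^K))$. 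Finally, for the weighted infeasibility I would evaluate the rate inequality at $\bx=\bx^*$ and $\by=\hat\by$ with $\hat\blambda=M\bar{\bx}^K/\|M\bar{\bx}^K\|$ and $\hat\theta_i=2\|\theta_i^*\|\,u_i/\|u_i\|$ (any $0/0$ read as $0$): since $u_i\in(-\cK_i)^\circ=\cK_i^*$ and $\langle u_i,g_i(\bar{x}_i^K)\rangle=\|u_i\|^2=d_{-\cK_i}(g_i(\bar{x}_i^K))^2$, we have $\hat\theta_i\in\cK_i^*$, $\|\hat\theta_i\|=2\|\theta_i^*\|$, $\|\hat\blambda\|\le1$, and $\cL(\bar{\bx}^K,\hat\by)=\Phi(\bar{\bx}^K)+\|M\bar{\bx}^K\|+2\sum_{i\in\cN}\|\theta_i^*\|\,d_{-\cK_i}(g_i(\bar{x}_i^K))$; combining with $\cL(\bx^*,\bar{\by}^K)\le\varphi(\bx^*)$ and the reverse inequality just shown reduces $\cL(\bar{\bx}^K,\hat\by)-\cL(\bx^*,\bar{\by}^K)$ below by $\|M\bar{\bx}^K\|+\sum_{i\in\cN}\|\theta_i^*\|\,d_{-\cK_i}(g_i(\bar{x}_i^K))$, while the rate inequality bounds it above by $\tfrac{1}{N_K}\big(\tfrac{1}{2\tilde\tau^0}\|\bx^*-\bx^0\|^2+\tfrac{1}{2\gamma^0}+2\sum_{i\in\cN}\tfrac{1}{\kappa_i^0}\|\theta_i^*\|^2\big)\le\Lambda_0/N_K$ (using $\tfrac{1}{\tilde\tau^0}\le\tfrac{1}{\tau^0}$). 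This proves the infeasibility bound; combined with the two one-sided suboptimality estimates it gives $|\Phi(\bar{\bx}^K)-\varphi(\bx^*)|\le\Lambda_0/N_K$, and since $N_K=\sum_{k=1}^{K}\gamma^{k-1}/\gamma^0=\Theta(K^2)$ all of these are $\cO(1/K^2)$.

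There is no single deep step; the real work is threefold: (i) correctly transcribing \eqref{eq:DPA-rate} for the role-swapped scheme \eqref{eq:PDA-2} and confirming its hypotheses hold uniformly in $k$ --- which is exactly what Lemma~\ref{lem:step-static} plus the dual-boundedness argument before \eqref{eq:tau_bar} supply; (ii) the semidefinite bookkeeping $\bQ_x^K-\eta^K(\alpha^K+\beta^K)\id_{nN}\succeq\tfrac12\bQ_x^K$, which is where the concrete step-size rule \eqref{eq:stepsize-rule-static} must be invoked to recover the sharp $\cO(1/K^2)$ constant; and (iii) the conic test-point construction $\hat\theta_i$ built from the projection residual onto $-\cK_i$. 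The hard part will be (iii): the multiplier magnitudes ($2\|\theta_i^*\|$ for $\hat\theta_i$, $1$ for $\hat\blambda$) have to be chosen so that, after absorbing the ergodic average's own feasibility deficit via the reverse estimate from step (ii)'s analogue, the residual constant is still exactly $\Lambda_0$; getting the factors of two right there, and handling the degenerate cases $M\bar{\bx}^K=\zero$ or $g_i(\bar{x}_i^K)\in-\cK_i$, is the only genuinely fiddly part.
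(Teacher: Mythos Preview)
Your proposal is correct and takes essentially the same approach as the paper: the paper's own proof of Theorem~\ref{thm:static-error-bounds} is the one-line ``the results follow from \eqref{eq:DPA-rate},'' and the template for filling in that sentence is exactly what appears in the proof of Theorem~\ref{thm:dynamic-error-bounds}, where the identical test points $\tilde\theta_i=2\|\theta_i^*\|\,\cP_{\cK_i^*}(g_i(\cdot))/\|\cP_{\cK_i^*}(g_i(\cdot))\|$ (your $u_i$ is precisely $\cP_{\cK_i^*}(g_i(\bar x_i^K))$ by Moreau decomposition) and a unit-norm consensus dual are used. Your semidefinite step $\bQ_x^K-\eta^K(\alpha^K+\beta^K)\id\succeq\tfrac12\bQ_x^K$ is the static analogue of condition \eqref{cond-6-d} in Lemma~\ref{lem:step-size}, and the reverse inequality via the saddle property matches \eqref{eq:aux-lower-dynamic}--\eqref{eq:lower-bound-dynamic}.
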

\begin{remark}
The result in Theorem \ref{thm:static-error-bounds} can be extended to weighted graphs by replacing
the consensus constraint $M\bx=0$ in \eqref{eq:dist_problem} with \sa{$(W^{\tfrac{1}{2}}\otimes\id_n)\bx=0$}, e.g., setting $w_{ij}$ to negative of the effective resistance for the edge $(i,j)$ leads to a better performance on barbel graphs~\cite{Aybat_GSIP,can2019randomized}.
\end{remark}
\begin{remark}\label{rem:linear}
It is important to note that when the constraint functions \ehh{$\{g_i\}_{i\in\cN}$} are \sa{affine}, then $L_{\max}(G)=0$; therefore, the primal step-size \sa{$\tilde{\tau}^0$} is independent of \saa{the bound $\bar{B}$ on $\{\btheta^k\}$, i.e., any positive $\tilde{\tau}^0$ such that $\tilde{\tau}^0\leq (L_{\max}(f)+4\gamma^0d_{\max}+\delta)^{-1}$ is admissible}.\vspace*{-1mm}
\end{remark}
\subsection{Extensions}
{Inspired by Proposition 3.6. in \cite{shi2015extra},
we show in the following lemma
that by suitably regularizing $f$, one can obtain a {\saa{restricted} strongly convex function} when $\ubar{\mu}=0$.}
\begin{lemma}\label{lem:restricted-convex-static}
Under Assumption~\ref{assump:sconvex}, consider $f(\bx)=\sum_{i\in\cN}f_i(x_i)$ 
with $\ubar{\mu}=0$. Given $\alpha>0$, let $f_\alpha(\bx)\triangleq f(\bx)+\alpha~r(\bx)$, where 
$r(\bx)\triangleq \tfrac{1}{2}\norm{\bx}_{W\otimes\id_n}^2$.
Then $\fprod{\nabla f_{\alpha}(\bx)-\nabla f_{\alpha}(\bx^*),~\bx-\bx^*}\geq \mu_\alpha \norm{\bx-\bx^*}$ for any $\bx$, i.e.,
$f_\alpha$ is \saa{restricted} {strongly convex} with modulus ${\mu}_\alpha \triangleq \frac{\bar{\mu}/N~+\alpha {\lambda}_2}{2}-\sqrt{\big(\frac{\bar{\mu}/N~-\alpha {\lambda}_2}{2}\big)^2+4\bar{L}^2}>0$ for $\alpha>\frac{4 N}{ {\lambda}_2 }\bar{L}^2/\bar{\mu}$, where $\bar{L}=\sqrt{{\sum_{i\in\cN}L_{f_i}^2}/{N}}$ and {${\lambda}_2=\lambda^+_{\min}(W)$}.
\end{lemma}
\begin{remark}
\label{rem:modulus}
The condition {$\alpha >\frac{4}{ \bar{\mu}\lambda^+_{\min}(W)}\sum_{i\in\cN}L_{f_i}^2$} is similar to the one in~\cite{shi2015extra}, where $\alpha$ should be greater than {$\frac{N \sa{L_{\max}^2(f)}}{2\bar{\mu}\lambda^+_{\min}({W})}$ for some ${W}\in\sy^{N}_+$ which is a parameter for their algorithm satisfying certain conditions.
}
\end{remark}

\section{A 
method for time-varying networks}
\label{sec:dynamic}
In this section we develop a distributed primal-dual algorithm for solving \eqref{eq:central_problem} when the communication network topology is \emph{time-varying}. We will adopt the following definition and assumption for the \emph{time-varying} network model.

\begin{defn}
\label{def:neighbors}
Given $t\geq 0$, for an undirected graph $\cG^t=(\cN,\cE^t)$, let $\cN_i^t\triangleq\{j\in\cN:\ (i,j)\in\cE^t~\hbox{ or }~(j,i)\in\cE^t\}$ denote the set of neighboring nodes of $i \in \mathcal{N}$, and $d_i^t\triangleq |\mathcal{N}_i^t|$ represent the degree of node $i\in \mathcal{N}$ at time $t$; for a directed graph $\cG^t=(\cN,\cE^t)$, let $\cN^{\,t,{\rm in}}_i\triangleq\{j\in\cN:\ (j,i)\in\cE^t\}\cup\{i\}$ and $\cN^{\,t,{\rm out}}_i\triangleq\{j\in\cN:\ (i,j)\in\cE^t\}\cup\{i\}$ denote the in-neighbors and out-neighbors of node $i$ at time $t$, respectively; and {$d_i^t\triangleq |\cN^{\,t,{\rm out}}_i|-1$} be the out-degree of node $i$.
\end{defn}
\begin{assumption}
\label{assump:communication_general}
Let $\{\cG^t\}_{t\in\reals_+}$ be a collection of either all directed or all undirected graphs. When $\cG^t$ is an undirected graph, 
$i\in\cN$ can send and receive data to and from $j\in\cN$ at time $t$ only if $j\in\cN_i^t$, i.e., $(i,j) \in \cE^t$ or $(j,i) \in \cE^t$; on the other hand, when $\cG^t$ is a directed graph, node $i\in\cN$ can receive data from $j\in\cN$ only if $j\in\cN_i^{\,t,{\rm in}}$, i.e., $(j,i)\in\cE^t$, and can send data to $j\in\cN$ only if $j\in\cN_i^{\,t,{\rm out}}$, i.e., $(i,j)\in\cE^t$.
\end{assumption}

We assume a \emph{compact} domain, i.e., let \nsa{${\Delta_i}\triangleq\max_{x_i\in \dom {\varphi_i}}\|x_i\|$} and ${\Delta}\triangleq\max_{i\in\mathcal{N}}\Delta_i<\infty$.  Let $\cB_0\triangleq\{x\in\reals^n:\ \norm{x}\leq {2\Delta}\}$ and $\mathcal{B}\triangleq\Pi_{i\in\cN}\cB_0\subset \cX\triangleq\Pi_{i\in\cN}\reals^n$; 
and let $\Ct\triangleq \cC\cap\cB$ be a set of bounded consensus decisions, {where $\cC$ is the consensus cone defined below:}
{
\begin{align}
\label{eq:consensus_set}
\cC \triangleq\{{\bf x}\in\cX
:\ \exists\bar{x}\in\mathbb{R}^n\ \st\ x_i=\bar{x}\quad \forall\ i\in\mathcal{N} \}.
\end{align}}%
\begin{defn}
\label{def:bregman}
Let $\cX\triangleq\Pi_{i\in\cN}\reals^n$ and $\cX\ni\bx=[x_i]_{i\in\cN}$; $\cY\triangleq\Pi_{i\in\cN}\reals^{m_i}\times\reals^{n|\cN|}$,  $\cY\ni\by=\eyh{[\btheta^\top; \blambda^\top]}^\top$ such that $\btheta=[\theta_i]_{i\in\cN}\in\reals^m$ and $\blambda\in\reals^{m_0}$, where $m\triangleq\sum_{i\in\cN}{m_i}$, and {$m_0\triangleq n|\cN|$}. Given 
$\gamma^k>0$ and $\kappa_i^k>0$ for $i\in\cN$,
\eyh{let $\bD^k_{\gamma}\triangleq\frac{1}{\gamma^k}\id_{m_0}$, $\bD^k_{\kappa}\triangleq\diag([\frac{1}{\kappa_i^k}\id_{m_i}]_{i\in\cN})$,
and { $\bD^k_{\kappa,\gamma}\triangleq \begin{bmatrix} \bD^k_{\kappa}& \zero \\ \zero & \bD^k_{\gamma} \end{bmatrix}$}.}
\end{defn}
\begin{defn}
\label{def:problem-components}
\eyh{Let ${\varphi}:\cX\rightarrow\reals\cup\{\infty\}$ be such that $\varphi(\bx)\triangleq\rho(\bx)+f(\bx)$ where $\rho(\bx)\triangleq\sum_{i\in\cN}\rho_i(x_i)$ and $f(\bx)\triangleq\sum_{i\in\cN}f_i(x_i)$.}
Let $h:\cY\rightarrow\reals\cup\{\infty\}$ such that $h(\by)\triangleq\sigma_{{\Ct}}(\blambda)+\sum_{i\in\cN}\sigma_{-\cK_i}(\theta_i)$, and \saa{let $G:
\cX\rightarrow \reals^m$ be 
as in Assumption~\ref{assump:g_i}}.
\end{defn}
The problem in \eqref{eq:central_problem} can be equivalently written as
{
\begin{align}
\label{eq:equiv-formulation}
\min_{\bx\in \Ct}\{
\varphi(\bx):\
g_i(x_i)\in-\cK_i,~i\in\cN\},
\end{align}}%
and \saa{\eqref{eq:equiv-formulation} can be reformulated} as an SP problem:
{
\begin{align}\label{eq:dynamic-saddle}
\min_{\bx} \max_{\blambda,\btheta}~ &\cL(\bx,\by)\triangleq 
\fprod{\blambda,~\bx}-\sigma_{\Ct}(\blambda) +\sum_{i\in\mathcal{N}}(\saa{\varphi_i(x_i)}+\langle \theta_i,g_i(x_i)\rangle-\sigma_{-\cK_i}(\theta_i)),
\end{align}}%
where 
$\btheta=[\theta_i]_{i\in\cN}$ and $\blambda\in\reals^{n|\cN|}$.
Therefore, 
one can compute a primal-dual optimal solution to \eqref{eq:central_problem} through {computing a saddle-point to \eqref{eq:dynamic-saddle}.}

\saa{As discussed 
in Remark~\ref{rem:sc}, $f$ is $\ubar{\mu}$-strongly convex for some $\ubar{\mu}>0$.}
\sa{Therefore, \saa{for any $\mu\in(0, \ubar{\mu}]$},}
\eqref{eq:dynamic-saddle} is a special case of \eqref{eq:saddle-problem} where $\psi_x=\rho+\frac{\mu}{2}\|\cdot\|^2$ \saa{with $\mu_x=\mu$}, $\phi_x=f-\frac{\mu}{2}\|\cdot\|^2$, and $\psi_y=h$ \saa{with $\mu_y=0$}, $\phi_y=0$, and $\cH(\bx,\by)=\fprod{\blambda,\bx}+\fprod{\btheta,G(\bx)}$.

In the rest, 
we consider an implementation of APD in~\eqref{eq:PDA-2} for solving \eqref{eq:dynamic-saddle}. This does not immediately result in a decentralized method; thus, we subsequently discuss how to 
modify it so that it works in a distributed fashion over time-varying communication networks.

Given the initial iterates $\bx^0$, $\by^0=[{\btheta^0}^\top; {\bmu^0}^\top]^\top$, and $\blambda^0=\bmu^0$, and step-size sequence $\{\tilde\tau^k,[\kappa_i^k]_{i\in\cN},\gamma^k\}_k$, \nsa{implementing APD in \eqref{eq:PDA-2} on \eqref{eq:dynamic-saddle}}
using $\bQ_x^k=\frac{1}{\tilde{\tau}^k}\id_{nN}$ and $\bQ_y^k=\bD^k_{\kappa,\gamma}$ leads to 
{
\begin{subequations}\label{eq:pock-pd-2}
\begin{align}
&{\bf p}^k\gets (1+\eta^k)\bmu^{k}-\eta^k\bmu^{k-1} +(1+\eta^k)\bJ G(\bx^k)^\top\btheta^{k}-\eta^k\bJ G(\bx^{k-1})^\top\btheta^{k-1}, \\
&\eyh{{\bf x}^{k+1}\gets}\argmin_{{\bf x}} \rho(\bx)+\tfrac{\mu}{2}\norm{\bx}^2+\langle \nabla \saa{f(\bx^k)}-\mu\bx^k+\bp^k,\bx \rangle +\tfrac{1}{ 2\tilde\tau^k}\|\bx-\bx^k\|^2, \label{eq:pock-pd-3-x}\\
&\theta_i^{k+1}\gets\argmin_{\theta_i} \sigma_{-\cK_i}(\theta_i) -\langle g_i({x_i^{k+1}}),~\theta_i\rangle  +\tfrac{1}{ 2\kappa_i^k}\|\theta_i-\theta_i^k\|^2,\quad i\in\cN, \label{eq:pock-pd-2-theta} \\
&\blambda^{k+1}\gets\argmin_{\blambda} \sigma_{\Ct}(\blambda)-\langle {{\bx}^{k+1}},\blambda \rangle +\tfrac{1}{ 2\gamma^k}\|\blambda-\nsa{\bmu^k}\|^2, \label{eq:pock-pd-2-lambda}\\
&\bmu^{k+1}\gets \blambda^{k+1},  \label{eq:pock-pd-2-nu}
\end{align}
\end{subequations}
}%
where $\bx^{-1}=\bx^0$, \sa{$\btheta^{-1}=\btheta^0$}, $\tau^k=(\tfrac{1}{\tilde\tau^k}+\mu)^{-1}$, \eyh{and
\eqref{eq:pock-pd-3-x} can be equivalently written as follows:
{
\begin{equation}
    \bx^{k+1}\gets\argmin_{{\bx}} \rho(\bx)+\langle \saa{\nabla f({\bx^k})}+\bp^k,~\bx \rangle   +\tfrac{1}{ 2\tau^k}\|\bx-\nsa{\bx^k}\|^2. \label{eq:pock-pd-2-x}
\end{equation}}}%
A possible choice for 
the positive parameter sequences $\{\gamma^k\}_k$, $\{\tau^k\}_k$, $\{\kappa_i^k\}_k$ for $i\in\cN$ is given in Figure~\ref{alg:PDD}.

For $k\geq 0$, using extended \rev{Moreau decomposition for proximal operators \cite{rockafellar2015convex}}, $\blambda^{k+1}$ in \eqref{eq:pock-pd-2-lambda} can be computed as
\saa{$\blambda^{k+1} =\prox{\gamma^k\sigma_{\Ct}}{(\bmu^k+\gamma^k {\bx}^{k+1})}=\gamma^k\left(\bom^k-\mathcal{P}_{\Ct}(\bom^k)\right)$,
where $\bom^k\triangleq\frac{1}{\gamma^k}{\bmu^k+{\bx}^{k+1}}$ for {$k\geq 0$}}.
For any $\bx
\in\cX$, $\mathcal{P}_{\Ct}(\bx)$ 
can be computed as $\mathcal{P}_{\Ct}(\bx)
=\mathcal{P}_{\mathcal{B}}(\cP_\cC(\bx))$, and $\cP_\cC(\bx)=\one\otimes \nsa{p(\bx)}$,
where $\nsa{p(\bx)}\triangleq\frac{1}{ |\mathcal{N}|}\sum_{i\in\mathcal{N}}x_i$, $\cP_{\cB}(\bx)=[\cP_{\cB_0}(x_i)]_{i\in\cN}$ and $\cP_{\cB_0}(x_i)=x_i\min\{1,\frac{2\Delta}{\norm{x_i}}\}$ for $i\in\cN$.

\saa{Although $\bx$-step in \eqref{eq:pock-pd-2-x}} and $\btheta$-step in \eqref{eq:pock-pd-2} can be computed locally at each node, computing
$\blambda$-step requires communication among the nodes to evaluate $\cP_{\Ct}(\bom^k)$. 
Indeed, evaluating the average operator {$p(\cdot)$} is \emph{not} a simple operation in a decentralized computational setting which only allows for communication among the neighbors. 
To overcome this issue, we will approximate the average operator {$p(\cdot)$} using multi-communication rounds, and analyze the resulting iterations as an \emph{inexact} primal-dual algorithm.

We define a \emph{communication round} at time $t$ as an operation over $\cG^t$ such that every node simultaneously sends and receives data to and from its neighboring nodes according to Assumption~\ref{assump:communication_general} -- the details of this operation will be discussed shortly. 
We assume that communication among neighbors occurs \emph{instantaneously}, and nodes operate \emph{synchronously}; and we further assume that for each iteration $k\geq 0$, there exists an approximate averaging operator $\cR^k(\cdot)$ 
which can be computed in a decentralized fashion and it approximates $\mathcal{P}_{\cC}(\cdot)$ 
with decreasing \emph{approximation error} as $k$, the number of iterations, increases. This \emph{inexact} version of APD using approximate averaging operator $\cR^k(\cdot)$ and running on time-varying communication network $\{\cG^t\}_{t\in\reals_+}$ will be called DPDA-TV. 
\begin{assumption}
\label{assump:approximate-average}
Given a time-varying network $\{\cG^t\}_{t\in\reals_+}$ such that $\cG^t=(\cN,\cE^t)$ for $t\geq 0$. Suppose that there is a global clock known to all $i\in\cN$. Assume that the local operations required to compute \rev{$\cP_{\cK_i^*}$} as in \eqref{eq:pock-pd-2-theta}, and $\prox{\rho_i}$ and $\grad f_i$ as in \eqref{eq:pock-pd-2-x} can be completed between two ticks of the clock for all $i\in\cN$ and $k\geq 0$; and  every time the clock ticks a communication round with instantaneous messaging between neighboring nodes takes place subject to Assumption~\ref{assump:communication_general}. \rev{Let $q_k\in\integers_+$ denotes the number of communication rounds at iteration $k\geq 0$.} Suppose that for each $k\geq 0$ there exists $\cR^k(\cdot)=[\cR_i^k(\cdot)]_{i\in\cN}$ such that $\cR_i^k(\cdot)$ can be computed with local information available to node $i\in\cN$, and decentralized computation of $\cR^k$ requires $q_k$ communication rounds. Furthermore, we assume that there exist $\Gamma>0$ and ${\beta}\in (0,1)$ such that for all $k\geq 0$,
{
\begin{align}
\label{eq:approx_error-for-full-vector-x}
{ \|\cR^k(\nsa{\bom})-\mathcal{P}_{\cC}(\nsa{\bom})\| \leq N~\Gamma \beta^{q_k}\norm{\nsa{\bom}}},\quad\forall~\nsa{\bom\in\cX}. 
\end{align}}%
\end{assumption}
\vspace*{-3mm}
\begin{figure}[h!]
\centering
\framebox{\parbox{0.98\columnwidth}{
{
\textbf{Algorithm DPDA-TV} \rev{( $\bx^{0},\btheta^0,\gamma^0,
\mu,\delta, {B},\{q_k\}$)} \\[0.5mm]
Initialization: $\bx^{-1}\gets\bx^0,\ \ \nsa{\btheta^{-1}\gets \btheta^0},\ \ \nsa{\bmu^{-1}\gets\mathbf{0}},\ \ \bmu^0\gets\mathbf{0}$,\\
\rev{$
\saa{\mu\in(0,~\ubar{\mu}]},\ \ \delta ,\gamma^0>0$,}\\
\saa{$\tilde{\tau}^0\gets  \big(L_{\max}{(f)} 
+2\gamma^0(1+\delta)+2B L_{\max}(G)\big)^{-1}$},\\[1mm]
$
\eta^0\gets 0,\ \ \kappa_i^0\gets\gamma^0\frac{\delta}{C_{g_i}^2}\ \ i\in\cN$\\[1mm]
Step $k$: ($k \geq 0$), $\forall i\in\cN$\\
\text{ } 1. $\tau^k\gets (\frac{1}{\ttau^{k}}+\mu)^{-1}$, \\[1mm]
\text{ } 2. $p_i^k\gets (1+\eta^k)\bJ g_i(x_i^k)^\top\theta^{k}_i-\eta^k\bJ g_i(x_i^{k-1})^\top\theta_i^{k-1}$ \\[1mm]
\text{ } \quad\qquad~ $+(1+\eta^k)\nu_i^{k}-\eta^k\nu_i^{k-1}$,\\[1mm]
\text{ } 3. 
$\saa{x_i^{k+1}\gets\prox{\tau^k\rho_i}\big[x_i^k-\tau^k\big(\nabla f_i(x_i^k)+p_i^k 
\big)\big]},$\\[1mm]
\text{ } 4. $\theta_i^{k+1}\gets\cP_{\cK_i^*}\Big(\theta_i^k+\kappa_i^kg_i(x_i^{k+1})\Big),$\\[1mm]
\text{ } 5. $\omega^k_i\gets\tfrac{1}{\gamma^k}{\nu}_i^k+x_i^{k+1},$\\[1mm]
\text{ } 6. \rev{Perform $q_k$ communication rounds to compute $\cR^k_i(\bom^k)$}, \\[1mm]
\text{ } \quad ~${\nu}_i^{k+1}\gets\gamma^k \Big(\omega_i^k - \cP_{\cB_0}\Big(\cR_i^k\big(\bom^k\big)\Big)\Big),$ \\[1mm]
\text{ } 7. $\gamma^{k+1}\gets\gamma^k\sqrt{1+\mu\tilde{\tau}^k}$, \ \ $\eta^{k+1}\gets\gamma^k/\gamma^{k+1}$,\\[1mm]
\text{ } 8. $\kappa_i^{k+1}\gets\gamma^{k+1}\frac{\delta}{C_{g_i}^2}$, \ \ $\tilde\tau^{k+1}\gets\tilde\tau^k\eta^{k+1}$
}}}
\caption{ Distributed Primal-Dual Alg. for Time-Varying $\{\cG^t\}_{t\geq 0}$ (DPDA-TV)}
\label{alg:PDD}
\vspace*{-1mm}
\end{figure}
{Now we briefly talk about such operators. Let $V^t\in\reals^{|\cN|\times|\cN|}$ be a matrix encoding the topology of $\cG^t=(\cN,\cE^t)$ in some way for $t\in\integers_+$. We define $W^{t,s}\triangleq V^tV^{t-1}...V^{s+1}$ for any $t,s\in\integers_+$ such that $t\geq s+1$.  For directed time-varying graph ${\cG^t}$, set $V^t\in\reals^{|\cN|\times |\cN|}$ {as follows: for each $i\in\cN$,}}
\begin{align}
\label{eq:directed-weights}
V^t_{ij}=\tfrac{1}{ d_j^t\nsa{+1}}\ \hbox{ if }\ j\in\cN^{\,t,{\rm in}}_i;\ \ V^t_{ij}=0\ \hbox{ if }\ j\not\in\cN^{\,t,{\rm in}}_i.
\end{align}
Let $\mathcal{C}_k\in\integers_{+}$ be the total number of \emph{communication rounds} 
before the $k$-th iteration of {DPDA-TV}, and let $q_k\in\integers_{+}$ be the number of communication rounds to be performed within the $k$-th iteration while evaluating $\cR^k$. For $\bom\in\cX$, 
define
{
\begin{equation}
\label{eq:approx-average-dual-directed}
\cR^k(\bom)\triangleq{\diag(W^{\mathcal{C}_k+q_k,\mathcal{C}_k}\ones_{|\cN|})^{-1}}(W^{\mathcal{C}_k+q_k,\mathcal{C}_k}\otimes\id_n)~\bom
\end{equation}
}%
to approximate $\mathcal{P}_{\cC}(\cdot)$. Note that $\mathcal{R}^k(\cdot)$ can be computed in a \emph{distributed fashion} requiring $q_k$ communication rounds -- $\cR^k$ is nothing but the push-sum protocol~\cite{kempe2003gossip}. Assuming that the digraph sequence $\{\cG^t\}_{t\in\integers_+}$ is uniformly strongly connected ($M$-strongly connected), it follows from \cite{nedic2015distributed} that $\cR^k$ satisfies Assumption~\ref{assump:approximate-average}. \nsa{When $\{\cG^t\}_{t\in\integers_+}$ is undirected, suppose the edge union of every $M$ consecutive graph is connected.
Under this assumption, if one chooses $\{V^t\}_{t\in\integers_+}$ 
such that $\zeta\leq V_{ij}^t$ for all $(i,j)\in\cE$ for some $\zeta>0$, 
then 
\begin{align}
\label{eq:approx-average-dual-undirected}
\cR^k(\bom)\triangleq (W^{\mathcal{C}_k+q_k,\mathcal{C}_k}\otimes\id_m)\bom
\end{align}
satisfies Assumption~\ref{assump:approximate-average} with
$\Gamma=\Theta(\tfrac{1}{N})$ and $\beta=(1-\tfrac{\zeta}{2N^2})^{\frac{1}{2M}}$ -- for details see~\cite{nedic2009distributedquant}.}

Note that for $\tcR^k(\cdot)\triangleq\cP_{\cB}(\cR^k(\cdot))$, we have $\tcR^k(\bw)\in\cB$, and $\|\tcR^k(\bom)-\mathcal{P}_{\Ct}(\bom)\|\leq N~\Gamma \beta^{q_k}\norm{\bom}$ for $\bom\in\cX$ 
due to non-expansivity of $\cP_\cB$.
Consider the $k$-th iteration of the algorithm in \eqref{eq:pock-pd-2}. {Instead of computing $\blambda^{k+1}$ 
as shown in \eqref{eq:pock-pd-2-lambda}, 
which require computing $\cP_\cC$, we propose
using the inexact averaging operator $\cR^k$ to approximate $\cP_\cC$.} Hence,
we obtain an \emph{inexact} variant of \eqref{eq:pock-pd-2} by replacing \eqref{eq:pock-pd-2-nu} 
with
{
\begin{equation}
\bmu^{k+1} \gets \gamma^k\left(\bom^k-\cP_\cB\big(\cR^k(\bom^k)\big)\right), \label{eq:inexact-rule-mu}
\end{equation}}%
\saa{and \eqref{eq:pock-pd-2-lambda} is never computed in the implementation.}

Thus, the updates 
in~\eqref{eq:pock-pd-2} can be computed inexactly, and in a \emph{decentralized} way for any {time-varying} connectivity network $\{\cG^t\}_{t\in\integers_+}$, via the node-specific computations as in the 
distributed primal-dual algorithm displayed in Fig.~\ref{alg:PDD}. Indeed, the iterate sequence $\{\bx^k,\bmu^k,\btheta^k\}_{k\geq 0}$ generated by~DPDA-TV displayed in Fig.~\ref{alg:PDD} is the same sequence generated by the recursion in \eqref{eq:pock-pd-2-theta}, \eqref{eq:pock-pd-2-x} and \eqref{eq:inexact-rule-mu}. 
\sa{However, due to inexact computations, the result in \eqref{eq:DPA-rate} is not applicable anymore.} 

Next, 
we quantify the suboptimality and infeasibility of the DPDA-TV iterate sequence.
\begin{theorem}\label{thm:dynamic-error-bounds}
{Suppose Assumptions 
\nsa{\ref{assump:lipschitz_f}-\ref{assump:sconvex}}, \ref{assump:communication_general} and \ref{assump:approximate-average} hold.}
Starting from $(\btheta^0,\bmu^0)=(\mathbf{0},\mathbf{0})$ and 
arbitrary $\bx^0$, let $\{\bx^k,\btheta^k,\bmu^k\}_{k\geq 0}$ be the iterate sequence generated by Algorithm DPDA-TV 
in Fig.~\ref{alg:PDD} 
\nsa{for $\{q_k\}_{k\geq 0}$ such that $
\sum_{k=1}^{\infty} \beta^{q_{k-1}}k^4<\infty$ and $B>0$ chosen sufficiently large -- if $\{g_i\}_{i\in\cN}$ are affine, one can set $B=0$.} Then $\{\bx^k\}_{k\geq 0}$ converges to $\bx^*=\ones\otimes x^*$ such that $x^*$ is the optimal solution to \eqref{eq:central_problem}.
Moreover, the following bounds hold for all $K\geq 1$:
{
\begin{subequations}\label{eq:rate_result-d}
\begin{align}
&\max\big\{|\varphi(\bar{\bx}^K)-\varphi({\bf x}^*)|,~d_{\cC}(\bar{\bx}^K)+\sum_{i\in\cN}\norm{\theta_i^*} d_{-\cK_i}(g_i(\bar{x}_i^K))\big\} \leq \frac{\nsa{\Lambda(K)}}{\cW_K}=\cO\big(1/K^2\big),\label{eq:rate_result-d-a}\\
&\|\bx^K-\bx^*\|^2\leq \tfrac{\tilde\tau^K}{\gamma^K}4\gamma^0\nsa{\Lambda(K)}=\cO\big(1/K^2\big), \label{eq:rate_result-d-b}
\end{align}
\end{subequations}
}%
{and the parameters satisfy $\max\{\cW_K,~\gamma^K/\tilde{\tau}^K\}=\cO(K^2)$, where $\cW_K=\sum_{k=1}^{K}{\gamma^{k-1}/\gamma^0}$, $\bar{\bx}^{K}=\cW_K^{-1}\sum_{k=1}^{K}\frac{\gamma^{k-1}}{\gamma^0}\bx^k$, $\nsa{\Lambda(K)}=\cO\big(\sum_{k=1}^K\beta^{q_{k-1}} k^4\big)$; hence, $\sup_{K\in\integers_+}\nsa{\Lambda(K)}<\infty$.}
\end{theorem}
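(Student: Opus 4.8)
The plan is to view DPDA-TV as an \emph{inexact} realization of the reversed APD scheme~\eqref{eq:PDA-2} applied to~\eqref{eq:dynamic-saddle}, and to propagate the approximation error through the one-step estimate of~\cite{hamedani2018primal} behind~\eqref{eq:DPA-rate}. First I would isolate the two perturbations in~\eqref{eq:inexact-rule}: \eqref{eq:inexact-rule-xi} replaces $\grad f_\alpha(\bxi^k)=\grad f(\bxi^k)+\alpha(\bxi^k-\cP_\cC(\bxi^k))$ by $\grad f(\bxi^k)+\alpha(\bxi^k-\cR^k(\bxi^k))$, i.e.\ it injects the additive error $\mathbf{e}_x^k\triangleq\alpha(\cP_\cC(\bxi^k)-\cR^k(\bxi^k))$ into $\grad\phi_x$, while~\eqref{eq:inexact-rule-mu} replaces the exact prox step $\hat{\bmu}^{k+1}=\gamma^k(\bom^k-\cP_{\Ct}(\bom^k))$ by $\bmu^{k+1}=\gamma^k(\bom^k-\cP_\cB(\cR^k(\bom^k)))$, injecting $\mathbf{e}_\mu^k\triangleq\hat{\bmu}^{k+1}-\bmu^{k+1}$ into the $\blambda$-block. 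Next I would bound these \emph{unconditionally}: the compactness hypothesis forces $\bxi^k\in\dom\rho$, hence $\|\bxi^k\|\le\sqrt{N}\,\Delta$; and the recursion $\bmu^{k+1}=\gamma^k(\bom^k-\cP_\cB(\cR^k(\bom^k)))$ with $\bom^k=\tfrac1{\gamma^k}\bmu^k+\bxi^{k+1}$ and $\|\cP_\cB(\cdot)\|\le 2\sqrt{N}\,\Delta$ gives $\|\bmu^k\|=\cO(\sum_{\ell<k}\gamma^\ell)=\cO(k^2)$ and $\|\bom^k\|=\cO(k)$; plugging these into Assumption~\ref{assump:approximate-average} (with non-expansivity of $\cP_\cB$) yields $\|\mathbf{e}_x^k\|=\cO(\beta^{q_k})$ and $\|\mathbf{e}_\mu^k\|\le\gamma^k N\Gamma\beta^{q_k}\|\bom^k\|=\cO(k^2\beta^{q_k})$.

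I would then re-run the descent analysis of~\cite{hamedani2018primal} behind~\eqref{eq:DPA-rate}, now carrying the extra terms $\langle\mathbf{e}_x^k,\cdot\rangle$, $\langle\mathbf{e}_\mu^k,\cdot\rangle$ produced by the optimality conditions of the perturbed $\bxi$- and $\blambda$-subproblems. The step-size rule of Fig.~\ref{alg:PDD} satisfies~\eqref{eq:reverse_general_step_cond} with $\bQ_x^k=\tfrac1{\tilde\tau^k}\id_{nN}$, $\bQ_y^k=\bD^k_{\kappa,\gamma}$ by the computation of Lemma~\ref{lem:step-static} adapted to the time-varying model, in which consensus is coupled through the identity rather than $M$, $L_{\phi_x}\preceq(L_{\max}(f)+\alpha)\id$ by Lemma~\ref{lem:restricted-convex-general}, and $L_{xx}\preceq B\,L_{\max}(G)\,\id$ holds along the trajectory once $\|\btheta^k\|\le B$ (with $L_{xx}=\mathbf{0}$ if the $g_i$ are affine); the rule also yields $\gamma^k=\Theta(k)$, $\tilde\tau^k=\Theta(1/k)$, $N_K=\Theta(K^2)$, $\gamma^K/\tilde\tau^K=\Theta(K^2)$. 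Telescoping the perturbed estimate with weights $t^k=\gamma^k/\gamma^0$ gives, for all $(\bx,\by)$ and $K\ge1$,
\[
N_K\big(\cL(\bar{\bxi}^K,\by)-\cL(\bx,\bar{\by}^K)\big)+\tfrac{t^K}{2}\|\bx-\bxi^K\|_{\bQ_x^K-\eta^K(\alpha^K+\beta^K)\id}^2+\tfrac{t^K}{2}\|\by-\by^K\|_{\bQ_y^K}^2\le\tfrac12\|\bx-\bxi^0\|_{\bQ_x^0}^2+\tfrac12\|\by-\by^0\|_{\bQ_y^0}^2+\cE_K(\bx,\by),
\]
where $\cE_K(\bx,\by)\triangleq\sum_{k=0}^{K-1}t^k(\langle\mathbf{e}_x^k,\bxi^{k+1}-\bx\rangle+\langle\mathbf{e}_\mu^k,\bmu^{k+1}-\blambda\rangle)$ up to benign lower-order terms. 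With $\|\bxi^{k+1}-\bx^*\|\le2\sqrt{N}\,\Delta$, $\|\bmu^{k+1}-\blambda^*\|=\cO(k^2)$, $t^k=\Theta(k)$ and the bounds above, each summand of $\cE_K(\bx^*,\by^*)$ is $\cO(k^4\beta^{q_k})$, so $\sup_K\cE_K(\bx^*,\by^*)=\cO(\sum_{k\ge1}k^4\beta^{q_{k-1}})=\cO(C_0)<\infty$; setting $\Lambda(K)\triangleq\tfrac12\|\bx^*-\bxi^0\|_{\bQ_x^0}^2+\tfrac12\|\by^*-\by^0\|_{\bQ_y^0}^2+\cE_K(\bx^*,\by^*)$, we get $\Lambda(K)=\cO(\sum_{k=1}^K k^4\beta^{q_{k-1}})$, hence uniformly bounded.

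The step I expect to be the delicate one is $\|\btheta^k\|\le B$, on which the step-size validity rests when the $g_i$ are not affine; I would argue it by induction on $K$ exactly as in the static case after Lemma~\ref{lem:step-static}. Starting from $\btheta^0=\bmu^0=\mathbf{0}$, under the hypothesis $\|\btheta^k\|\le B$ for $k<K$ the conditions~\eqref{eq:reverse_general_step_cond} hold for those indices, so the displayed inequality is valid at stage $K$; evaluating it at a saddle point $(\bx^*,\by^*)=(\bx^*,\btheta^*,\blambda^*)$ of $\cL$ (which exists by Assumption~\ref{assump:saddle-point}) and keeping on the left only the nonnegative term $\tfrac{\gamma^K}{2\gamma^0}\|\btheta^*-\btheta^K\|_{\bD_\kappa^K}^2$ (all other left-hand terms being $\ge0$, in particular $\cL(\bar{\bxi}^K,\by^*)-\cL(\bx^*,\bar{\by}^K)\ge0$ by the saddle property) produces a quadratic inequality for $B$ of the form~\eqref{eq:B-bound}, with the additive constant enlarged by $\cO(C_0)$ and the dependence on $\tilde\tau^0\le\bar\tau$ making the right side grow only linearly in $B$; any $B$ above its larger root closes the induction, so a sufficiently large $B$ exists, and $B=0$ works for affine $g_i$. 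With $\|\btheta^k\|\le B$ and $\sup_K\Lambda(K)<\infty$, the same evaluation and the step-size design (which gives $\bQ_x^K-\eta^K(\alpha^K+\beta^K)\id\succeq\tfrac12\bQ_x^K$) yield $\|\bxi^K-\bx^*\|^2\le\tfrac{\tilde\tau^K}{\gamma^K}4\gamma^0\Lambda(K)=\cO(1/K^2)$, which is~\eqref{eq:rate_result-d-b}; in particular $\bxi^k\to\bx^*=\ones\otimes x^*$.

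Finally I would convert the saddle-gap bound into~\eqref{eq:rate_result-d-a}: for the upper estimate of $\varphi(\bar{\bxi}^K)-\varphi(\bx^*)$ take $\bx=\bx^*$, $\by=\by^*$, use $\varphi\le\Phi$ and $\cL(\bx^*,\by^*)\le\cL(\bx^*,\bar{\by}^K)$; to pull out $d_\cC(\bar{\bxi}^K)$ and each $\|\theta_i^*\|\,d_{-\cK_i}(g_i(\bar{\bxi}_i^K))$, keep $\bx=\bx^*$ and take $\by$ equal to $\by^*$ shifted one unit into $\cC^\circ$ along $\bar{\bxi}^K-\cP_\cC(\bar{\bxi}^K)$ and into $\cK_i^*$ toward $g_i(\bar{\bxi}_i^K)-\cP_{-\cK_i}(g_i(\bar{\bxi}_i^K))$, weighted by $\|\theta_i^*\|$ (using $\sigma_{\Ct}=0$ on $\cC^\circ$, $\sigma_{-\cK_i}=0$ on $\cK_i^*$, and $\bar{\bxi}^K\in\mathbf{int}(\cB)$ so only the consensus part of $\Ct$ is active); the matching lower bound $\varphi(\bar{\bxi}^K)-\varphi(\bx^*)\ge-\cO(1/K^2)$ follows from convexity, $\cL(\bx^*,\by^*)\le\cL(\bar{\bxi}^K,\by^*)$, $\langle\theta_i^*,g_i(x^*)\rangle=0$ and $d_\cC(\bx^*)=0$. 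Dividing by $N_K=\Theta(K^2)$ gives~\eqref{eq:rate_result-d-a}, and $\sup_K\Lambda(K)<\infty$ gives the stated orders of $N_K$, $\gamma^K/\tilde\tau^K$. The crux is the apparent circularity between $\cE_K$ and the iterates it references; it is broken because the perturbations admit unconditional bounds ($\|\bxi^k\|$ from compactness, $\|\bmu^k\|=\cO(k^2)$ from its defining recursion), making $C_0=\sum_k k^4\beta^{q_{k-1}}<\infty$ exactly the required summability, together with the separate $B$-induction for the nonlinear conic coupling.
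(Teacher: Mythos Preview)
Your overall architecture---treat DPDA-TV as an inexact version of the reversed APD scheme applied to~\eqref{eq:dynamic-saddle}, carry the approximation errors through the one-step descent estimate, telescope with weights $t^k=\gamma^k/\gamma^0$, close the $\|\btheta^k\|\le B$ loop by induction, and extract suboptimality/infeasibility from the saddle-gap via suitable test points---is exactly the paper's approach, and all the a-priori bounds you invoke ($\|\bxi^k\|\le\sqrt N\Delta$ by compactness, $\|\bmu^k\|=\cO(k^2)$ from the recursion, non-expansivity of $\cP_\cB$) are the ones the paper uses.

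There is, however, a genuine accounting error in your dual perturbation. You model the error as the \emph{iterate} discrepancy $\mathbf{e}_\mu^k=\hat{\bmu}^{k+1}-\bmu^{k+1}=-\gamma^k\be_1^{k+1}$ and insert $\langle\mathbf{e}_\mu^k,\bmu^{k+1}-\blambda\rangle$ into $\cE_K$. But the $\blambda$-subproblem carries the metric $\tfrac{1}{2\gamma^k}\|\cdot\|^2$: if you redo the three-point inequality for $\blambda^{k+1}$ and substitute $\bmu^{k+1}=\blambda^{k+1}+\gamma^k\be_1^{k+1}$, the cross terms that survive are of the form $\langle\be_1^{k+1},\bmu-\bmu^{k+1}\rangle$ and $\cO(\gamma^k\|\be_1^{k+1}\|)$, \emph{not} $\langle\gamma^k\be_1^{k+1},\bmu-\bmu^{k+1}\rangle$---the extra $\gamma^k$ in the iterate error is cancelled by the $1/\gamma^k$ in the Bregman term. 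This is precisely what the paper obtains in its $E_1^{k+1}(\bmu)=\|\be_1^{k+1}\|(4\gamma^k\sqrt N\Delta+\|\bmu-\bmu^{k+1}\|)$. With your formula as written, the summand $t^k\|\mathbf{e}_\mu^k\|\,\|\bmu^{k+1}-\blambda^*\|$ is $\Theta(k)\cdot\cO(k^2\beta^{q_k})\cdot\cO(k^2)=\cO(k^5\beta^{q_k})$, not $\cO(k^4\beta^{q_k})$ as you claim; your arithmetic and your error model are inconsistent, and taken at face value your argument would need the stronger hypothesis $\sum_k k^5\beta^{q_{k-1}}<\infty$. The fix is simply to track the \emph{projection} error $\be_1^{k+1}=\cP_{\Ct}(\bom^k)-\tcR^k(\bom^k)$ (which is $\cO(k\beta^{q_k})$) rather than the scaled iterate error $\gamma^k\be_1^{k+1}$; with that correction the summand is $\tfrac{\gamma^k}{\gamma^0}\|\be_1^{k+1}\|\cdot\cO(k^2)=\cO(k^4\beta^{q_k})$ and the rest of your plan goes through.

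A minor point on the final conversion: your device ``$\bar{\bxi}^K\in\mathbf{int}(\cB)$, so only the consensus part of $\Ct$ is active'' is unnecessary (and a bit fragile). The paper simply takes $\tilde{\bmu}\in\cC^\circ$ to be the unit vector along $\cP_{\cC^\circ}(\bar{\bxi}^K)$ and uses $\Ct\subset\cC$ together with $\mathbf{0}\in\Ct$ to get $\sigma_{\Ct}(\tilde{\bmu})=0$ directly, with no appeal to interiority.
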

The following lemma is {a slight} extension of Proposition 3 in~\cite{chen2012fast}, where it is stated for $p=1$; its proof is omitted.
\begin{lemma}\label{lemsum}
Let $\beta\in(0,1)$, $p\geq 1$ is a rational number, and $d\in\integers_+$. Define $P(k,d)=\{\sum_{i=0}^d c_i k^i:\ c_i\in\reals\ i=1,\ldots,d \}$
denote the set of polynomials of $k$ with degree at most $d$. Let {$r^{(k)}\in P(k,d)$} for $k\geq 1$, then
$\sum_{k=0}^{+\infty}{r^{(k)}}\beta^{\sqrt[\leftroot{-3}\uproot{3}p]{k}}$ is finite.
\end{lemma}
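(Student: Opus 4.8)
The plan is to reduce the statement to the elementary fact that the product of a polynomial sequence with a geometric sequence is summable, after a condensation of the index set that is adapted to the stretched exponent $k^{1/p}$. First I would record that since $r^{(k)}=\sum_{i=0}^d c_i k^i$ with coefficients $c_0,\dots,c_d$ not depending on $k$, one has $|r^{(k)}|\le C_1 k^d$ for all $k\ge 1$ with $C_1\triangleq\sum_{i=0}^d|c_i|$, while the $k=0$ term is just the finite constant $r^{(0)}$; hence it suffices to prove that $S\triangleq\sum_{k\ge 1} k^d\,\beta^{k^{1/p}}$ is finite.

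Next I would partition $\{k\in\integers:\ k\ge1\}$ into the blocks $B_n\triangleq\{k\in\integers:\ n^p\le k<(n+1)^p\}$ for $n\ge1$; since $t\mapsto t^p$ is strictly increasing with $1^p=1$, the $B_n$ are pairwise disjoint and cover every integer $k\ge1$. On $B_n$ one has $k^{1/p}\ge n$, hence $\beta^{k^{1/p}}\le\beta^n$ because $\beta\in(0,1)$; also $k^d<(n+1)^{pd}\le 2^{pd}n^{pd}$; and, using $p\ge1$ with the mean value theorem applied to $t\mapsto t^p$, $|B_n|\le(n+1)^p-n^p+1\le p(n+1)^{p-1}+1\le C_2\,n^{p-1}$ for a constant $C_2$ depending only on $p$. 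Combining the three estimates gives
\[
S=\sum_{n\ge1}\ \sum_{k\in B_n}k^d\beta^{k^{1/p}}\le\sum_{n\ge1}C_2\,n^{p-1}\cdot 2^{pd}n^{pd}\cdot\beta^n=2^{pd}C_2\sum_{n\ge1}n^{pd+p-1}\beta^n,
\]
and the last series converges because, for the fixed exponent $M\triangleq pd+p-1\ge0$, the ratio $\frac{(n+1)^M\beta^{n+1}}{n^M\beta^n}=(1+\tfrac{1}{n})^M\beta$ tends to $\beta<1$.

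The only point that is not completely routine is that the ratio test applied directly to $\sum_k k^d\beta^{k^{1/p}}$ is inconclusive: consecutive exponents satisfy $(k+1)^{1/p}-k^{1/p}=O(k^{1/p-1})\to0$, so the ratio tends to $1$, and one genuinely has to ``condense'' along blocks on which $k^{1/p}$ grows by a full unit before a geometric bound becomes available; after that the bookkeeping is elementary. An equivalent route is the integral test, since $x\mapsto x^d\beta^{x^{1/p}}$ is eventually decreasing and the substitution $u=x^{1/p}$ turns $\int x^d\beta^{x^{1/p}}\,dx$ into $p\int u^{pd+p-1}\beta^u\,du<\infty$; I note in passing that only $p\ge1$ is used in either argument.
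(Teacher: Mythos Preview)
Your proof is correct and complete. The paper itself omits the proof entirely, merely remarking that the lemma is a slight extension of Proposition~3 in~\cite{chen2012fast} (stated there for $p=1$). Your condensation argument---grouping the integers into blocks $B_n=[n^p,(n+1)^p)\cap\integers$ on which $k^{1/p}\ge n$, bounding the block length by $C_2 n^{p-1}$ via the mean value theorem, and thereby reducing to the convergent series $\sum_{n\ge 1}n^{pd+p-1}\beta^n$---is a clean, self-contained route that fills this gap. Your diagnosis that the direct ratio test is inconclusive because $(k+1)^{1/p}-k^{1/p}\to 0$ is exactly what motivates the block decomposition, and your closing observation that rationality of $p$ plays no role (only $p\ge 1$ is needed) is correct as well.
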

\begin{remark}
When the constraint functions \ehh{$\{g_i\}_{i\in\cN}$} are affine, the primal step-size \saa{$\tilde\tau^0$} is independent of $B$ and one can take $B=0$. In general, the results hold for any $B>0$ sufficiently large \nsa{-- for details see the proof of Theorem~\ref{thm:dynamic-error-bounds}.}
\end{remark}
{\begin{remark}
It is worth mentioning that the summability condition in Theorem \ref{thm:dynamic-error-bounds} can be reduced to $\sum_{k=1}^{\infty} \beta^{q_{k-1}}k^3$ (at the cost of larger $\cO(1)$ constant in rate result) with a tighter analysis exploiting the boundedness of $\{\bmu^k\}_k$ similar to the Theorem 3.1 in \cite{aybat2019distributed}. Note that the condition in that paper is $\sum_{k=1}^{\infty} \beta^{q_{k-1}}k<+\infty$ and the stronger condition here is due to the fact that the accelerated methods are more fragile to inexact computation errors -- see \cite{schmidt2011convergence}.
\end{remark}}
\begin{remark}
Note that, {at the $K$-th iteration}, the suboptimality, infeasibility and consensus violation are $\cO\left(\tfrac{1}{\cW_K}~\nsa{\Lambda(K)}\right)$ in the ergodic sense, and the distance of iterates to $\bx^*$ is $\cO\left(\tfrac{\tilde{\tau}^K}{\gamma^K}~\saa{\Lambda(K)}\right)$ where \nsa{$\Lambda(K)$} denotes the error accumulations due to average approximation.
Moreover, \nsa{$\Lambda(K)$} can be bounded above for all $K\geq 1$
\saa{since $\Lambda(K)\leq \cO(\sum_{k=1}^{\infty} \beta^{q_{k-1}}k^4)<\infty$. Indeed,}
for any $c>0$, {choosing $q_k=(5+c)\log_{{1}/{\varsigma}}(k+1)$ for some $(0,1)\ni\varsigma\geq \beta$, ensures that 
\saa{$\sum_{k=1}^{\infty} \beta^{q_{k-1}}k^4\leq 1+\tfrac{1}{c}$,}
and the total number of communication rounds right before the $K$-th iteration is equal to $\mathcal{C}_K\triangleq\sum_{k=0}^{K-1}q_k\leq(5+c)K\log_{{1}/{\varsigma}}(K)$.}
\end{remark}
\begin{remark}
{Choosing the number of communications as $q_k=\cO(\log_{1/\varsigma}(k))$, for some $(0,1)\ni\varsigma\geq \beta$, requires the knowledge of global information $\beta$. In the case where such global information is not available, one can let $q_k=(k+1)^{1/p}$, for any $p\geq 1$. Using Lemma~\ref{lemsum}, we have $\nsa{\Lambda(K)}<+\infty$. This choice of $q_k$ leads to $\mathcal{C}_K=\sum_{k=0}^{K-1}q_k= \cO(K^{1+1/p})$.
On other hand, a practical way to estimate $\beta\in (0,1)$ is to run an average consensus iterations with a random initialization until iterates stagnate around the average; this leads to a rate coefficient $\beta_i$ for $i\in\cN$. Next, nodes can do a max consensus to compute $\bar{\beta}=\max_{i\in\cN}\beta_i$ and use it to set $q_k=(5+c)\log_{{1}/{\bar{\beta}}}(k+1)$.}
\end{remark}
\begin{remark}
\label{rem:N-complexity}
\saa{At the end of the proof of Theorem \ref{thm:dynamic-error-bounds}, we show that $\Lambda(K)=\cO(\tfrac{N}{\gamma^0}+NB+(\gamma^0+1)N^2\Gamma)$; moreover, from Lemma \ref{lem:step-size}, we also get $\cW_K=\Theta(\mu\tilde \tau^0 K^2)$. Note that for an undirected time-varying graph $\{\cG_t\}$, $\Gamma={\Theta(1/N)}$ and $\log(1/\varsigma)=\Omega(1/N^3)$ --see~\cite{nedic2009distributedquant}. Hence, selecting
$\gamma^0=1/\sqrt{N}$ and $q_k=(5+c)\log_{{1}/{\varsigma}}(k+1)$ for some $(0,1)\ni\varsigma\geq \beta$, implies that the iteration complexity of DPDA-TV is $\cO(N^2/K^2)$ and drops to \ehh{$\cO(N/K^2)$} when \ehh{$\{g_i\}_{i\in\cN}$} are affine functions. Furthermore, the total number of communications to achieve $\epsilon$-suboptimality/infeasibility is $\tilde\cO(N^4/\sqrt{\epsilon})$, and drops to $\tilde\cO(N^{3.5}/\sqrt{\epsilon})$ when \ehh{$\{g_i\}_{i\in\cN}$} are affine functions.}
One can conclude that the result in \cite{aybat2016primal} implies $\tilde{\cO}(N^4/\epsilon)$ number of communications to achieve $\epsilon$-suboptimality/infeasibility when \ehh{$\{g_i\}_{i\in\cN}$} are affine functions while in this paper we derived the improved rate of $\tilde{\cO}({N^{3.5}}/\sqrt{\epsilon})$ for strongly convex objective.
\end{remark}
\subsection{Extensions}
Next lemma 
is an extension of Proposition 3.6. in \cite{shi2015extra}, where we show
by suitably regularizing $f$, one can obtain a \ehh{restricted strongly convex function} when $\ubar{\mu}=0$.
\begin{lemma}\label{lem:restricted-convex-general}
Under Assumption~\ref{assump:sconvex}, consider $f(\bx)=\sum_{i\in\cN}f_i(x_i)$ 
with $\ubar{\mu}=0$. Given $\alpha>0$, let $f_\alpha(\bx)\triangleq f(\bx)+\tfrac{\alpha}{2}d_{\cC}^2(\bx)$. 
Then $f_\alpha$ is \ehh{restricted strongly convex}
with modulus ${\mu}_\alpha \triangleq \frac{\bar{\mu}/N~+\alpha}{2}-\sqrt{\big(\frac{\bar{\mu}/N~-\alpha}{2}\big)^2+4\bar{L}^2}>0$ for any $\alpha>\frac{4}{ \bar{\mu}}N\bar{L}^2$ \nsa{where $\bar{L}$ is defined in Assumption~\ref{assump:lipschitz_f}.} 
\end{lemma}
\begin{remark}
Lemma~\ref{lem:restricted-convex-general} generalizes Proposition~3.6. of \cite{shi2015extra} to the time-varying network setting.
{If $\ubar{\mu}>0$, we set $\alpha=0$ and choose $\mu=\ubar{\mu}$; otherwise, when $\ubar{\mu}=0$,
since $f_\alpha$ is \ehh{restricted strongly convex} with modulus $\mu_\alpha>0$ for any $\alpha>\frac{4}{\bar{\mu}}N\bar{L}^2$, we set $\mu=\mu_\alpha$ for some $\alpha>\frac{4}{\bar{\mu}}N\bar{L}^2$.}
\end{remark}
\vspace*{-3mm}
\section{Numerical Experiments}
\label{sec:numerics}
\rev{In this section, we test the performance of the proposed algorithms and compare it with other decentralized and centralized algorithms to compute a projection onto the intersection of ellipsoids. Given $x_0$, let $x^*$ be the optimal solution to 
{
\begin{align}\label{eq:qcqp}
\min_{x\in\mathbb{R}^n:\norm{x}\leq D}\{\tfrac{1}{2}\norm{x-x_0}^2:\tfrac{1}{2}x^\top A_i x+b_i^\top x\leq c_i,~\forall i\in\cN\}
\end{align}}%
over network $\cG^t=(\cN,\cE^t)$ for $t\geq 0$ --additional numerical experiments are presented in  Supplementary Material. We compare our methods with DRPDS \cite{yuan2015regularized} (decentralized method) and Mirror-prox \cite{he2015mirror} (centralized method), in terms of the relative error and infeasibility of the iterate sequence, i.e., $ \max_{i\in\cN}\norm{{x}_i^k-x^*}/ \norm{x^*}$ and {$\max_{i\in\cN}\norm{(g_i(\bar{x}_i^k))_+}$}.
}

\rev{In the experiments, we set $n=20$, $D=5$ and generate $x_0$ such that its entries are 
i.i.d 
with uniform distribution
on $[-1,1]$. For each $i\in\cN$, we generate $c_i\in\reals$ uniformly at random within $[0.5,1.5]$, $b_i\in\reals^n$ such that
its entries are 
i.i.d. 
with standard Gaussian distribution, and  $A_i=R_i^\top R_i/\norm{R_i}\in\reals^{n\times n}$ where 
entries of $R_i\in \reals^{n\times n}$ are 
i.i.d. 
with standard Gaussian distribution. Let {$g_i(x)\triangleq \tfrac{1}{2}x^\top A_i x+b_i^\top x-c_i$} for $i\in\cN$. 
\sa{Below, we fix 
$|\cN|=12$ and $|\cE|=24$.}} For both DPDA and DPDA-TV, we chose $\gamma^0=\frac{1}{4}$ and $\delta= C_\min$. \saa{Moreover, since $\bar{x}=\mathbf{0}$ is a Slater point of \eqref{eq:qcqp}, a dual bound $B=\frac{1}{2}\norm{x_0}^2/\min_{i\in\cN}\{c_i\}$ can be obtained according to \cite[Lemma 6.1]{aybat2019distributed}.}

\begin{figure}[h]
\vspace*{-3mm}
\centering
\includegraphics[scale=0.4]{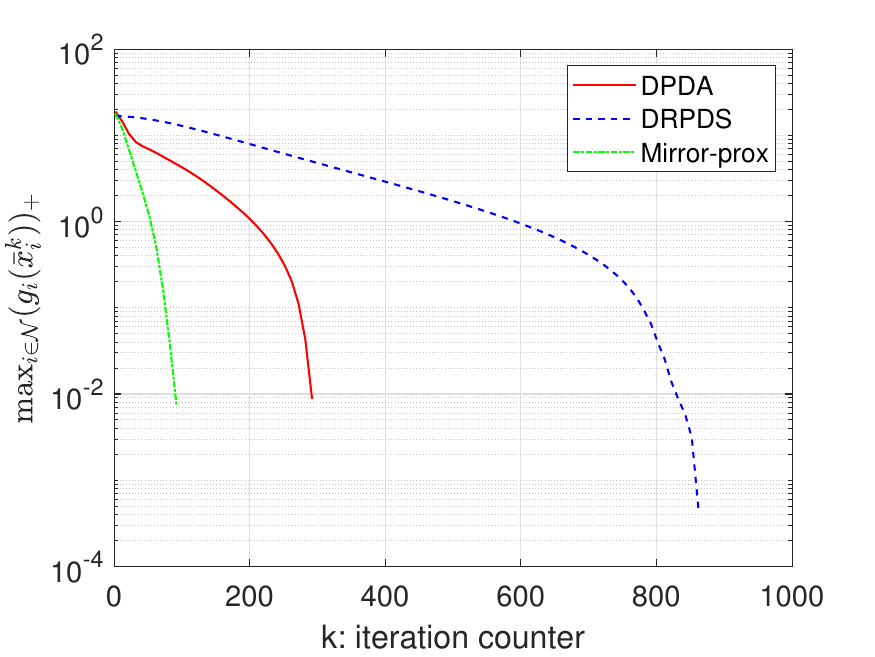}\hspace*{-3mm}
\includegraphics[scale=0.4]{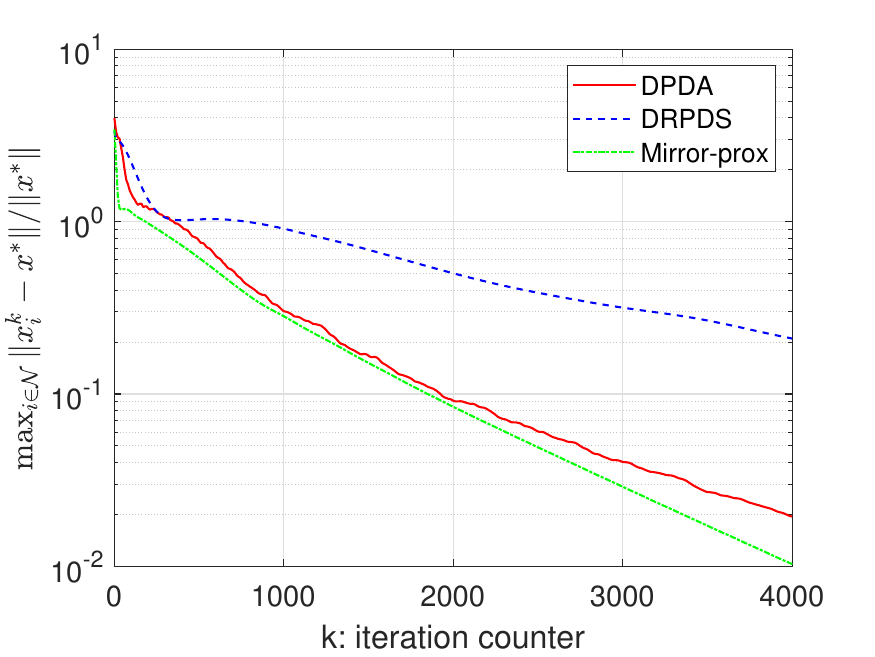}
\caption{\sa{DPDA against DRPDS and Mirror-prox on a static undirected network.}}\vspace*{-2mm}
\label{fig:qcqp_s}
\end{figure}
\rev{\indent{\bf Test on static undirected networks:} $\cG=(\cN,\cE)$ is generated as a random small-world network, i.e., 
we choose $|\cN|$ edges creating a random cycle over nodes, 
then the remaining $|\cE|-|\cN|$ edges are selected uniformly at random.
The results in Figure \ref{fig:qcqp_s} shows that our method outperform DRPDS and has a comparable performance against the centralized Mirror-prox.}

\begin{figure}[h]
\vspace*{-3mm}
\centering
\includegraphics[scale=0.4]{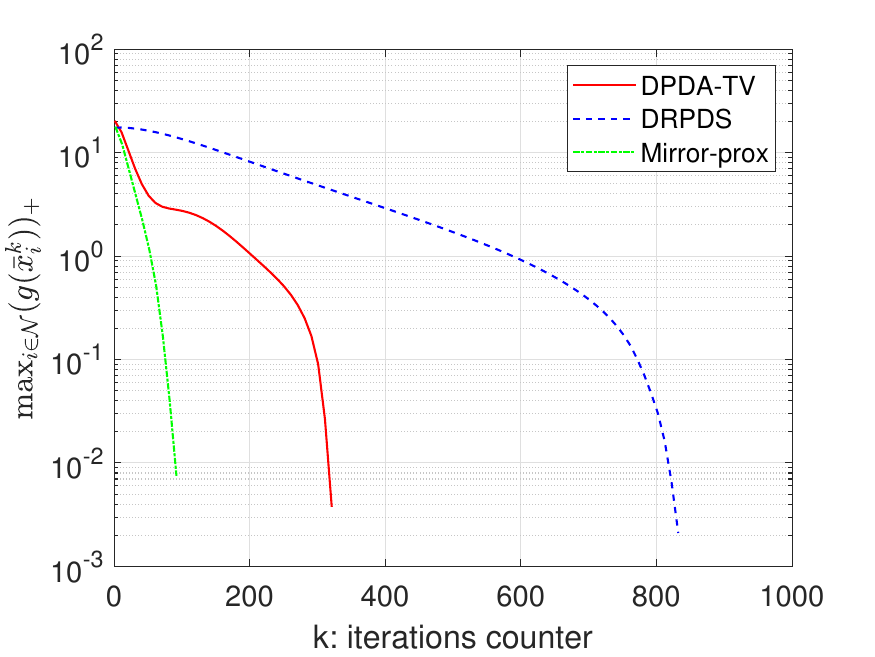}\hspace*{-3mm}
\includegraphics[scale=0.4]{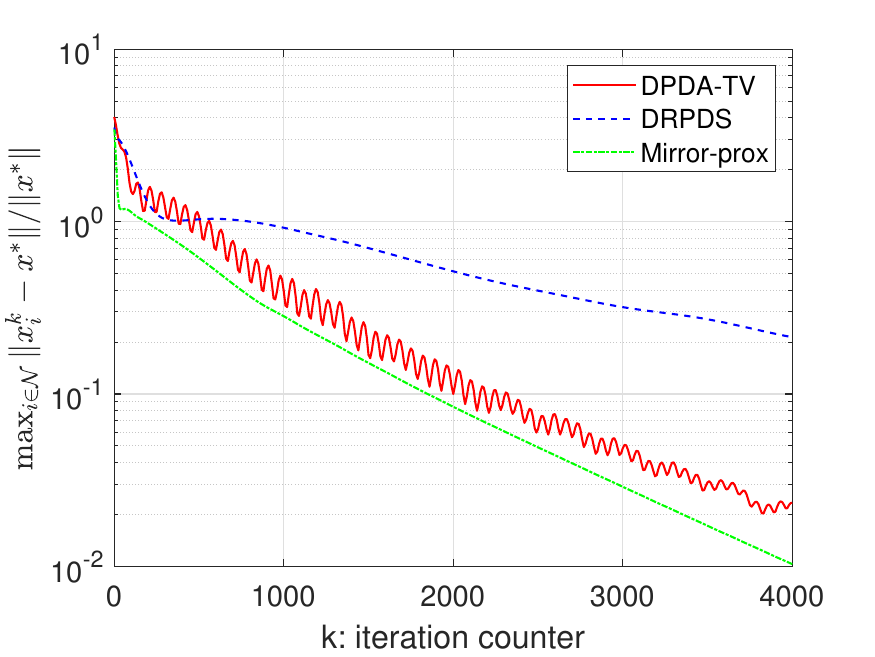}
\caption{\sa{DPDA-TV against DRPDS and Mirror-prox on a time-varying undirected network.}}\vspace*{-3mm}
\label{fig:qcqp_d_u_iter}
\end{figure}
\begin{figure}[h]
\vspace*{-1mm}
\centering
\includegraphics[scale=0.4]{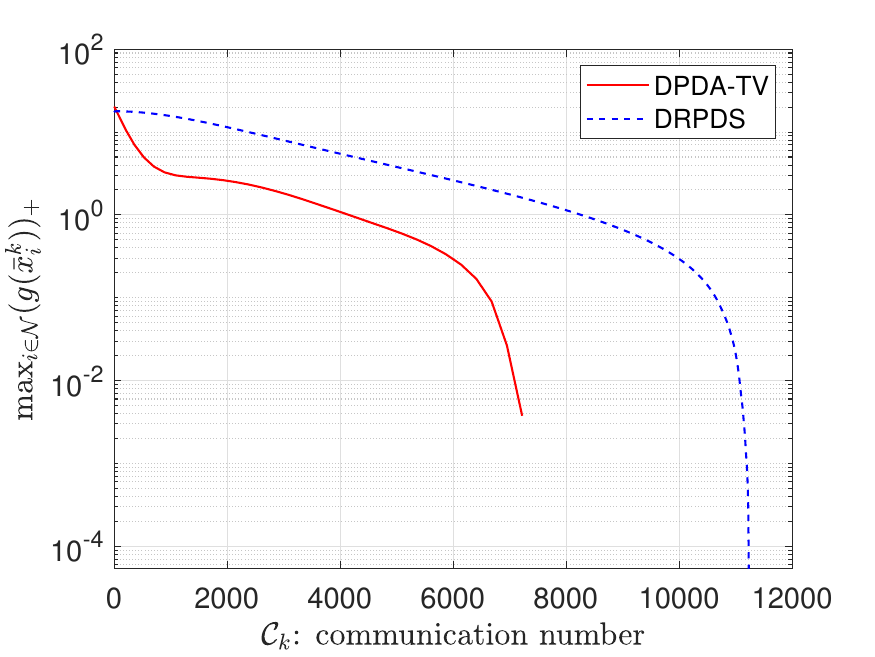}\hspace*{-3mm}
\includegraphics[scale=0.4]{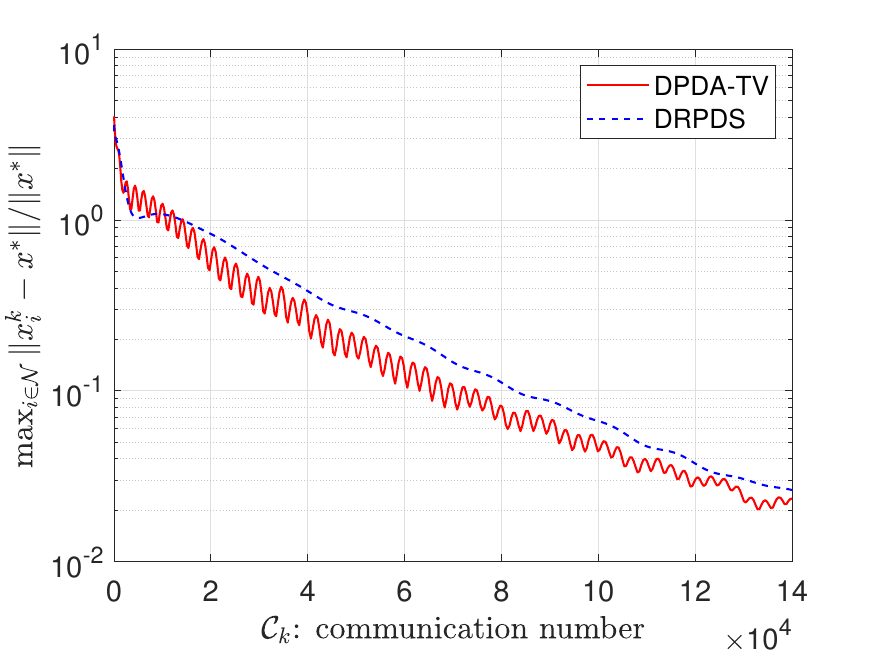}
\caption{Communication complexity of DPDA-TV against DRPDS 
on a time-varying undirected network.}\vspace*{-3mm}
\label{fig:qcqp_d_u_com}
\end{figure}
\rev{
\indent{\bf Test on time-varying undirected networks:} 
Consider the random small-world \sa{$\cG=(\cN,\cE)$} as described above. Given $M\in\integers_+$, and $p\in(0,1)$, for each $k\in\integers_+$, we generate $\cG^t=(\mathcal{N},\mathcal{E}^t)$, the communication network at time $t\in\{(k-1)M,\ldots,kM-2\}$ by sampling $\lceil p |\cE|\rceil$ edges of $\cG$ uniformly at random and we set $\mathcal{E}^{kM-1}=\cE\setminus \bigcup_{t=(k-1)M}^{kM-2}\cE^t$. In all experiments, we set $M=5$, $p=0.8$ and the number of communications per iteration is set to $q_k=5\ln(k+1)$.
For each consensus round $t\geq 1$, $V^t$ is formed according to Metropolis weights, i.e., for each $i\in\cN$, $V^t_{ij}=1/(\max\{d_i,d_j\}+1)$ if $j\in\cN_i^t$,  $V^t_{ii}=1-\sum_{i\in\cN_i}V^t_{ij}$, and $V^t_{ij}=0$ otherwise -- see~\eqref{eq:approx-average-dual-undirected} for our choice of $\cR^k$. 
Figure \ref{fig:qcqp_d_u_iter} show the comparison of the methods when the $x$-axis is in terms of iteration counter $(k)$. We observe that DPDA-TV outperform DRPDS and slightly slower rate of DPDA-TV \sa{in comparison to Mirror-prox} is the price we pay for the decentralized setting. Note that DRPDS has one communication round per iteration; hence, we also plot Figure \ref{fig:qcqp_d_u_com} where the $x$-axis denotes the total number of communication rounds, i.e., $\mathcal{C}_k=\sum_{i=0}^{k-1}q_i$, and our method has still a better performance.}
\begin{figure}[h]
\vspace*{-3mm}
\centering
\includegraphics[scale=0.4]{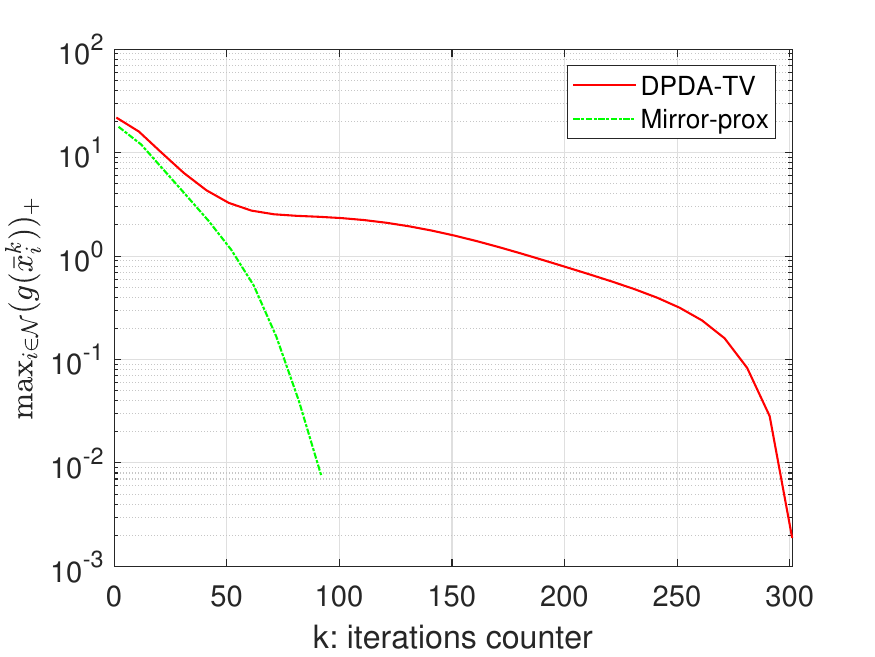}\hspace*{-3mm}
\includegraphics[scale=0.4]{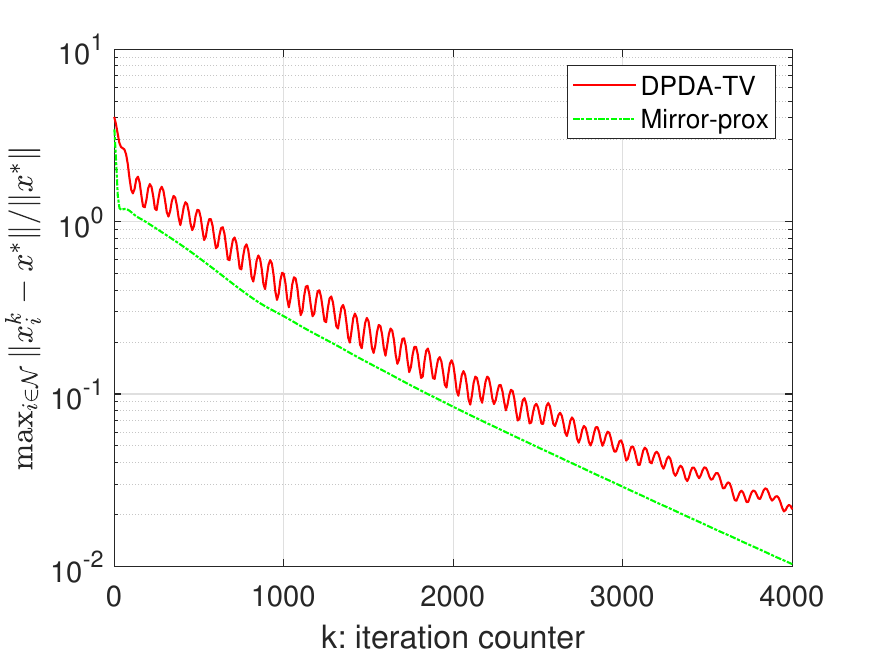}
\caption{DPDA-TV against Mirror-prox over time-varying directed network.}\vspace*{-1mm}
\label{fig:qcqp_d_d}
\end{figure}
\begin{figure}[htbp]
\centering
\begin{minipage}[c]{0.5\textwidth}
    \begin{center}
      \begin{tikzpicture}[scale=0.9]
        \coordinate (x1) at (-0.5,0.2);
        \coordinate (x2) at (4.4,1.3);
        \coordinate (x3) at (4.4,-0.3);
        \coordinate (x4) at (1.9,1.8);
        \coordinate (x5) at (3,-0.8);
        \coordinate (x6) at (1.5,-0.8);
        \coordinate (x7) at (3,1.5);
        \coordinate (x8) at (0,-0.6);
        \coordinate (x9) at (1.25,1.25);
        \coordinate (x10) at (0.6,0.7);
        \coordinate (x11) at (0.4,1.5);
        \coordinate (x12) at (3.6,0.45);


        \draw [arrows={- triangle 45}] (x1) -- (x10);
        \node[align=left, above] at (x10) { $10$};

        \draw [arrows={- triangle 45}] (x1) -- (x6);
        \node[align=left, below] at (x6) { $6$};

        \draw [arrows={- triangle 45}] (x8) -- (x1);
        \node[align=left, left] at (x1) { $1$};

        \draw [arrows={- triangle 45}] (x8) -- (x10);

        \draw [arrows={triangle 45 - triangle 45}] (x8) -- (x6);
        \node[align=left, left] at (x8) { $8$};

        \draw [arrows={- triangle 45}] (x6) -- (x3);
        \node[align=left, right] at (x3) { $3$};

        \draw [arrows={- triangle 45}] (x11) -- (x1);

        \draw [arrows={- triangle 45}] (x9) -- (x11);
        \node[align=left, left] at (x11) { $11$};

        \draw [arrows={- triangle 45}] (x9) -- (x3);

         \draw [arrows={- triangle 45}] (x9) -- (x5);
        \node[align=left, left] at (x5) { $5$};

        \draw [arrows={- triangle 45}] (x4) -- (x9);
        \node[align=left, right] at (x9) { $9$};

        \draw [arrows={- triangle 45}] (x4) -- (x11);

        \draw [arrows={- triangle 45}] (x7) -- (x4);
        \node[align=left, above] at (x4) { $4$};

        \draw [arrows={- triangle 45}] (x7) -- (x12);
        \node[align=left, right] at (x12) { $12$};

        \draw [arrows={- triangle 45}] (x7) -- (x6);

        \draw [arrows={- triangle 45}] (x2) -- (x10);

        \draw [arrows={- triangle 45}] (x12) -- (x6);

        \draw [arrows={- triangle 45}] (x12) -- (x2);
        \node[align=left, above] at (x2) { $2$};

        \draw [arrows={- triangle 45}] (x12) -- (x5);

        \draw [arrows={- triangle 45}] (x3) -- (x12);

        \draw [arrows={- triangle 45}] (x5) -- (x3);

        \draw [arrows={- triangle 45}] (x10) -- (x7);
        \node[align=left, above] at (x7) { $7$};

        \draw [arrows={- triangle 45}] (x10) -- (x5);

        \filldraw[fill=red!50!](x1) circle [radius=0.08];
        \filldraw[fill=red!50!] (x2) circle [radius=0.08];
        \filldraw[fill=red!50!] (x3) circle [radius=0.08];
        \filldraw[fill=red!50!] (x4) circle [radius=0.08];
        \filldraw[fill=red!50!] (x5) circle [radius=0.08];
        \filldraw[fill=red!50!](x6) circle [radius=0.08];
        \filldraw[fill=red!50!] (x7) circle [radius=0.08];
        \filldraw[fill=red!50!] (x8) circle [radius=0.08];
        \filldraw[fill=red!50!] (x9) circle [radius=0.08];
        \filldraw[fill=red!50!] (x10) circle [radius=0.08];
        \filldraw[fill=red!50!] (x11) circle [radius=0.08];
        \filldraw[fill=red!50!] (x12) circle [radius=0.08];
      \end{tikzpicture}
    \end{center}
\end{minipage}
 \caption{$\cG_d=(\cN,\cE_d)$ directed strongly connected graph} \label{fig:Gd}
\end{figure}
\rev{
\indent{\bf Time-varying directed network:} We generate directed time-varying communication networks similar to \cite{nedich2016achieving}. Let $\cG=(\cN,\cE)$ be the directed graph shown in Fig.~\ref{fig:Gd} where it has $|\cN|=12$ nodes and $|\cE|=24$ directed edges. 
$\{\cG^t\}_{t\geq 0}$ is generated as in the undirected case with parameters $M=5$ and $p=0.8$; hence, $\{\cG^t\}_{t\geq 0}$ is $M$-strongly-connected. Moreover, communication weight matrices $V^t$ are formed according to rule \eqref{eq:directed-weights}. We chose the initial step-sizes for DPDA-TV as in the time-varying undirected case. Note that DRPDS cannot handle directed networks; hence, in Fig.~\ref{fig:qcqp_d_d} we only compare DPDA-TV against Mirror-prox. We observe that over time-varying directed networks DPDA-TV 
\sa{is still competitive against Mirror-prox}.}
\singlespacing
\bibliographystyle{IEEEtran}
\bibliography{papers}{}
\section{Appendix}
\subsection{Proof of Lemma~\ref{lem:restricted-convex-static}}
Let $\bx^*=\ones_{|\cN|}\otimes x^*$, where $x^*$ is the 
solution to \eqref{eq:central_problem} 
for $\bar{f}$ strongly convex with modulus $\bar{\mu}>0$ -- see Assumption~\ref{assump:sconvex}. Fix 
$\alpha>\frac{4}{{\lambda}_2}\sum_{i\in\cN}L_{f_i}^2/\bar{\mu}$ and $\bx\in\reals^{n|\cN|}$. \sa{WLOG suppose $n=1$.} Using 
$\ns(W)=\spn\{\ones\}$, any $\bx\in\reals^{|\cN|}$ can be decomposed into $\bu\in \spn\{\ones\}$ and $\bv\in \spn\{\ones\}^\perp$ where $\bx=\bu+\bv$ and $\norm{\bx}^2=\norm{\bu}^2+\norm{\bv}^2$. From definition of $f_\alpha$,
\vspace*{-2mm}
{
\begin{eqnarray}\label{grad-g}
\lefteqn{\fprod{\nabla f_{\alpha}(\bx)-\nabla f_{\alpha}(\bx^*),~\bx-\bx^*} =} \nonumber \\
& & \fprod{\nabla f(\bx)-\nabla f(\bx^*),~\bx-\bx^*}+\alpha\norm{\bx-\bx^*}_{W\otimes \id_n}^2. \vspace*{-6mm}
\end{eqnarray}
}%
Let $\bar{L}\triangleq \sqrt{\sum_{i\in\cN}L_{f_i}^2/N}$, where $N\triangleq |\cN|$. The inner product on the rhs of~\eqref{grad-g} can be bounded by
\ehh{using Lipschitz continuity of $\grad f$ and strong convexity of $f$}
as follows:
{
\begin{flalign*}
&\fprod{\grad f(\bx)-\grad f(\bx^*),~\bx-\bx^*} =\fprod{\grad f(\bu)-\grad f(\bx^*) ,\bu-\bx^*}\\
&+\fprod{\grad f(\bx)-\grad f(\bu),\bx-\bu}+\fprod{\grad f(\bu)-\grad f(\bx^*),\bx-\bu}\\
&+\fprod{\grad f(\bx)-\grad f(\bu),\bu-\bx^*} \geq \frac{\bar{\mu}}{N}\norm{\bx^*-\bu}^2-2\bar{L}\norm{\bx^*-\bu}\norm{\bv}. \vspace*{-3mm} 
\end{flalign*}
}%
Thus, from \eqref{grad-g} 
and the fact that $\norm{\bx-\bx^*}_{W\otimes \id_n}^2=\norm{\bv}_{W\otimes \id_n}^2\geq {\lambda}_2 \norm{\bv}^2$, 
we get
{
\begin{eqnarray}\label{g-strong-convex-s}
\lefteqn{\fprod{\nabla f_{\alpha}(\bx)-\nabla f_{\alpha}(\bx^*),~\bx-\bx^*}  \geq} \nonumber \\
& &\frac{\bar{\mu}}{N}\norm{\bx^*-\nsa{\bu}}^2-2\bar{L}\norm{\bx^*-\bu}\norm{\bv}+\alpha{\lambda}_2 \norm{\bv}^2.
\end{eqnarray}
}%
Next, fix $\omega>0$. Then either \textbf{(i)} $\norm{\bv}\leq \omega \norm{\bu-\bx^*}$, or \textbf{(ii)} $\norm{\bv}\geq \omega \norm{\bu-\bx^*}$ holds. Using the same arguments to obtain \eqref{g-strong-convex-result}, we can conclude that
{
\begin{eqnarray}\label{g-strong-convex-result-s}
\lefteqn{\fprod{\nabla f_{\alpha}(\bx)-\nabla f_{\alpha}(\bx^*),~\bx-\bx^*} } \nonumber \\
& & \geq \min\Big\{\frac{\bar{\mu}}{N}-2\bar{L}\omega, ~\alpha {\lambda}_2-\frac{2\bar{L}}{\omega}\Big\} \norm{\bx-\bx^*}^2.
\end{eqnarray}
}%
Since $\omega\geq 0$ is arbitrary, {$f_{\alpha}$ is \ehh{restricted strongly convex}} with modulus $\mu_\alpha=\max_{\omega\geq 0}\min\Big\{\frac{\bar{\mu}}{N}-2\bar{L}\omega, ~\alpha {\lambda}_2-\frac{2\bar{L}}{\omega}\Big\}$.
Note $\mu_\alpha$ is attained for $\omega_\alpha\geq 0$ such that $\frac{\bar{\mu}}{N}-2\bar{L}\omega_\alpha=\alpha {\lambda}_2-\frac{2\bar{L}}{\omega_\alpha}$, which implies that $\omega_\alpha=\frac{1}{2}\Big(\frac{\bar{\mu}/N~-\alpha {\lambda}_2}{2\bar{L}} + \sqrt{\frac{\bar{\mu}/N~-\alpha {\lambda}_2}{2\bar{L}} +4} \Big)$. 
Moreover, $\mu_\alpha=\frac{\bar{\mu}}{N}-2\bar{L}\omega_\alpha$ is the value given in the statement of the lemma, and we have $\frac{\bar{\mu}}{N}>\mu_\alpha>0$ for any $\alpha>\frac{4N}{{\lambda}_2 \bar{\mu}}\bar{L}^2$. It is worth mentioning that $\mu_\alpha$ is a concave increasing function of $\alpha$ over $\reals_{++}$, and $\sup_{\alpha>0} \mu_\alpha = \lim_{\alpha\nearrow\infty}\mu_\alpha=\frac{\bar{\mu}}{N}$.\qed

\subsection{Proof of Lemma~\ref{lem:restricted-convex-general}}
\rev{Let $\bx^*=\ones_{|\cN|}\otimes x^*$, where $x^*$ is the unique optimal solution to \eqref{eq:central_problem}, and according to Assumption~\ref{assump:sconvex}, \ehh{$\bar f$} is strongly convex
with modulus $\bar{\mu}>0$. Fix some arbitrary $\alpha>\frac{4}{\bar{\mu}}\sum_{i\in\cN}L_{f_i}^2$ and $\bx\in\reals^{n|\cN|}$. Since $\cC$ is a closed convex cone, $\bx$ can be decomposed into $\bu=\cP_\cC(\bx)$ and $\bv=\cP_{\cC^\circ}(\bx)$, i.e., $\bx=\bu+\bv$ and $\norm{\bx}^2=\norm{\bu}^2+\norm{\bv}^2$. From the definition of $f_\alpha$,
{
\begin{eqnarray}\label{grad-g-d}
\lefteqn{\fprod{\nabla f_\alpha(\bx)-\nabla f_\alpha(\bx^*),~\bx-\bx^*}=} \nonumber \\
& &\fprod{\nabla f(\bx)-\nabla f(\bx^*),~\bx-\bx^*}+\alpha\fprod{\bx-\bx^*,~\bv},
\end{eqnarray}
}%
which follows from the fact that $\grad r(\bx)=\bx-\cP_\cC(\bx)$; hence $\grad r(\bx^*)=\zero$.
Let $N\triangleq |\cN|$ and $\bar{L}\triangleq \sqrt{\frac{\sum_{i\in\cN}L_{f_i}^2}{N}}$. Since $\bx^*,\bu\in\cC$ and $f$ is convex, Lipschitz differentiable, and strongly convex, we get
{
\begin{align*}
\fprod{\nabla f(\bx)-f(\bx^*),\bx-\bx^*} \geq \frac{\bar{\mu}}{N}\norm{\bu-\bx^*}^2-2\bar{L}\norm{\bu-\bx^*}\norm{\bv}
\end{align*}}%
\sa{--for more details see the proof of a related result in Lemma~\ref{lem:restricted-convex-static}.}
Note that $\bu-\bx^*\in\cC$; hence, $\fprod{\bu-\bx^*,\bv}=0$ since $\bv\in\cC^\circ$. Thus, $\fprod{\bx-\bx^*, \bv}=\norm{\bv}^2$. 
Therefore, using \eqref{grad-g-d} and 
and the previous inequality
implies that
{
\begin{eqnarray}\label{g-strong-convex}
\lefteqn{\fprod{\nabla f_\alpha(\bx)-\nabla f_\alpha(\bx^*),\bx-\bx^*}\geq} \nonumber \\
& &  \frac{\bar{\mu}}{N}\norm{\bu-\bx^*}^2-2\bar{L}\norm{\bu-\bx^*}\norm{\bv}+\alpha \norm{\bv}^2.
\end{eqnarray}
}%
Next, fix some arbitrary $\omega\geq 0$.
We consider two cases:
(i) if $\norm{\bv}\leq \omega \norm{\bu-\bx^*}$, then from \eqref{g-strong-convex}, we get
{
\begin{eqnarray}\label{g-strong-convex-i}
\lefteqn{\fprod{\nabla f_{\alpha}(\bx)-\nabla f_{\alpha}(\bx^*),~\bx-\bx^*} } \nonumber \\
& & \geq \big(\frac{\bar{\mu}}{N}-2\omega\bar{L}\big)\norm{\bu- \bx^*}^2+\alpha  \norm{\bv}^2\nonumber  \\
& & \geq \min\left\{\frac{\bar{\mu}}{N}-2\omega\bar{L}, ~\alpha \right\} \norm{\bx-\bx^*}^2;
\end{eqnarray}
}%
and (ii) if $\norm{\bv}\geq \omega \norm{\bu-\bx^*}$, then we get
{
\begin{eqnarray}\label{g-strong-convex-ii}
\lefteqn{ \fprod{\nabla f_{\alpha}(\bx)-\nabla f_{\alpha}(\bx^*),~\bx-\bx^*}  \frac{\bar{\mu}}{N}\norm{\bu- \bx^*}^2+\big(\alpha-\frac{2\bar{L}}{\omega} \big)\norm{\bv}^2} \nonumber \\
& & \geq \min\Big\{\frac{\bar{\mu}}{N}, ~\alpha -\frac{2\bar{L}}{\omega}\Big\} \norm{\bx-\bx^*}^2.
\end{eqnarray}
}%
Combining \eqref{g-strong-convex-i} and \eqref{g-strong-convex-ii} we conclude that
{
\begin{eqnarray}\label{g-strong-convex-result}
\fprod{\nabla f_\alpha(\bx)-\nabla f_\alpha(\bx^*),~\bx-\bx^*}\geq \min\left\{\frac{\bar{\mu}}{N}-2\bar{L}\omega, ~\alpha-\frac{2\bar{L}}{\omega}\right\} \norm{\bx-\bx^*}^2.
\end{eqnarray}
}%
Since $\omega\geq 0$ is arbitrary, $f_\alpha$ is restricted strongly convex with respect to $\bx^*$ with modulus $\mu_\alpha=\max_{\omega\geq 0} \min\left\{\frac{\bar{\mu}}{N}-2\bar{L}\omega, ~\alpha-\frac{2}{\omega}\bar{L}\right\}$. Note $\mu_\alpha$ is attained for $\omega_\alpha\geq 0$ such that $\frac{\bar{\mu}}{N}-2\bar{L}\omega_\alpha= \alpha-\frac{2}{\omega_\alpha}\bar{L}$, which implies that $\omega_\alpha=\frac{\bar{\mu}/N-\alpha}{4\bar{L}} + \sqrt{\left(\frac{\bar{\mu}/N-\alpha}{4\bar{L}}\right)^2 +1}$. Moreover, $\mu_\alpha=\frac{\bar{\mu}}{N}-2\bar{L}\omega_\alpha$ is the value given in the statement of the lemma, and we have $\frac{\bar{\mu}}{N}>\mu_\alpha>0$ for any $\alpha>\frac{4N}{ \bar{\mu}}\bar{L}^2$. It is worth mentioning that $\mu_\alpha$ is a concave increasing function of $\alpha$ over $\reals_{++}$, and $\sup_{\alpha>0} \mu_\alpha = \lim_{\alpha\nearrow\infty}\mu_\alpha=\frac{\bar{\mu}}{N}$.} \qed
\vspace*{-3mm}

\subsection{Key lemmas for the proof of Theorem~\ref{thm:dynamic-error-bounds}}
We first define \saa{the proximal error sequence $\{\be^k\}_{k\geq 1}$,} 
which will be used for analyzing the 
Algorithm~DPDA-TV displayed in Fig.~\ref{alg:PDD}. For $k\geq 0$, let
{%
\begin{align}
\label{eq:prox-error-seq-1}
&\be^{k+1} \triangleq \cP_{\Ct}\left(\bom^k\right)-\tcR^k\left(\bom^k\right),
\end{align}
}%
where $\bom^k=\tfrac{1}{\gamma^k}{\bmu}^k+{{\bx}^{k+1}}$ and $\tcR^k(\bx)=\cP_{\cB}(\cR^k(\bx))$, i.e., $\tcR^k(\bx)=[\tcR_i^k(\bx)]_{i\in\cN}$ and $\tcR_i^k(\bx)=\cP_{\cB_0}(\cR_i^k(\bx))$, for $\bx\in\cX$. Thus, for $k\geq 0$, $\bmu^{k+1}=\blambda^{k+1}+\gamma^k \be^{k+1}$ \saa{since 
\eqref{eq:pock-pd-2-nu} is replaced with \eqref{eq:inexact-rule-mu}.}
In the rest, we set 
$\bmu^0$ to $\mathbf{0}$.

The following observations will also be useful for proving error bounds for DPDA-TV iterate sequence:

\noindent \textbf{(i)} Note that \eqref{eq:inexact-rule-mu} {and boundedness of domain of $\varphi_i$} implies for each $i\in\cN$,
$\|\nu_i^{k+1}\| \leq \gamma^k\|\omega_i^{k}\|+\break \gamma^k\|{\widetilde{\cR}}_i^k\big(\bom^k\big)\|\leq \|\nu_i^k\|+\gamma^k\big(\|x_i^{k+1}\|+2\Delta\big){\leq \|\nu_i^k\|+3\gamma^k\Delta}$.
Thus, we trivially get the following bound on $\norm{\bmu^{k+1}}$:
\vspace*{-3mm}
{
\begin{equation}\label{eq:mu-bound}
\|\bmu^{k+1}\| \leq~3\sqrt{N}\Delta\sum_{t=0}^k \gamma^t.   
\end{equation}
}%
\textbf{(ii)} Moreover, 
for any $\bmu$ and $\blambda$ we have that
{
\begin{equation} \label{eq:support-error}
\sigma_{\Ct}({\bmu})=\sup_{{\bf x}\in \Ct}~\langle \blambda,{\bf x} \rangle+\langle \bmu-\blambda,{\bf x} \rangle \leq \sigma_{\Ct}(\blambda)+2\sqrt{N}~\Delta~\|\bmu-\blambda\|.
\end{equation}
}%
\begin{defn}\label{definition}
Given $\mu,\delta,\nsa{B}>0$, and 
positive sequences
$\{\tau^k,[\kappa_i^k]_{i\in\cN},\gamma^k\}_k$, define {$\bD^k_{\tau}\triangleq\frac{1}{\tau^k}\id_{n|\cN|}$,} \saa{$\mathbf{\widetilde{D}}^k_{\tau}\triangleq\bD^k_{\tau}-{\mu}\id_{n|\cN|}$}, 
$\mathbf{\bar{D}}^k_{\tau}\triangleq\diag([(\frac{1}{ \tau^k} -L_{f_i})\id_{n}]_{i\in \mathcal{N}})$, and $\bL_G\triangleq \diag([L_{g_i}\id_n]_{i\in\cN})$.
Let {$\cA^k\triangleq(\eta^k)^2\gamma^{k}(1+{\delta })~\id_{n|\cN|}+\eta^kB\bL_G\succ 0$}, and $\bD^k_{\kappa,\gamma}$ is defined in Definition~\ref{def:bregman}. \nsa{Define nonlinear map $T:\cX\to\cY$ such that $T(\bx)\triangleq \eyh{[G(\bx)^\top;\bx^\top]}^\top$.}
\end{defn}
To prove Theorem~\ref{thm:dynamic-error-bounds}, we first
need 
Lemmas~\ref{lem:step-size} and \ref{lem:bound-J-s} describing a proper choice {for the 
step-size sequences.}
\begin{lemma}\label{lem:step-size}
Let $\bD^k_{\kappa,\gamma}$ be as 
in Definition \ref{def:bregman}, \nsa{$\bD^k_{\tau}$,} $\mathbf{\widetilde{D}}^k_{\tau}$ and $\nsa{\bar{\bD}^k_{\tau}}$ be as in Definition \ref{definition} for 
\saa{any $\mu\in(0,\ubar{\mu}]$}. Suppose
positive step-size sequences $\{\tau^k,[\kappa_i^k]_{i\in\cN},\gamma^k,\nsa{\eta^k}\}_k$ are chosen as in DPDA-TV displayed in Fig.~\ref{alg:PDD} \nsa{for some $B>0$}, then the following relations hold for all $k\geq 0$:\vspace*{-3mm}
\begin{subequations}\label{eq:cond-d}%
\begin{minipage}{0.48\hsize}
\begin{align}
&{\gamma^k}\bD^k_{\tau}\succeq \gamma^{k+1}\mathbf{\widetilde{D}}^{k+1}_{\tau}, \label{cond-1-d}\\
& \gamma^k\bD^k_{\kappa}\succeq \gamma^{k+1}\bD^{k+1}_{\kappa},\label{cond-2-d}\\
& {\bar\bD^k_{\tau}\succeq \cA^{k}+B\bL_G} \label{cond-5-d},
\end{align}
\end{minipage}%
\begin{minipage}{0.48\hsize}
\begin{align}
& \gamma^k\bD^k_{\gamma}\succeq \gamma^{k+1}\bD^{k+1}_{\gamma}, \label{cond-3-d}\\
& \gamma^k= \gamma^{k+1}\eta^{k+1},\label{cond-4-d}\\
& {\widetilde{\bD}^K_{\tau}\succeq 2\cA^k}. \label{cond-6-d}
\end{align}
\end{minipage}
\end{subequations}

\noindent Moreover, $\eta^k\in(0,1)$, \saa{$\frac{1}{\tau^k}=\frac{1}{\tilde\tau^k}+\mu=\Theta(\mu k)$, and $\gamma^k=\Theta(\mu \tilde\tau^0\gamma^0 k)$ for $k\geq 1$}.
\end{lemma}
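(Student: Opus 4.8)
The plan is to verify the six relations in \eqref{eq:cond-d} directly from the update rules of Algorithm DPDA-TV (Fig.~\ref{alg:PDD}), and then to read off the stated asymptotics. Writing $a^k\triangleq 1/\tilde\tau^k$ and $s^k\triangleq\sqrt{1+\mu\tilde\tau^k}=\sqrt{1+\mu/a^k}\ge 1$, the rules $\gamma^{k+1}=\gamma^k s^k$, $\eta^{k+1}=\gamma^k/\gamma^{k+1}$, $\tilde\tau^{k+1}=\tilde\tau^k\eta^{k+1}$ give at once the two identities I would use repeatedly: $a^{k+1}=s^k a^k$ (so $a^k$ and $\gamma^k$ are rescaled by the \emph{same} factor $s^k$ at each step), and $\gamma^{k+1}\tilde\tau^{k+1}=\gamma^k\tilde\tau^k$, hence $\gamma^k\tilde\tau^k\equiv\gamma^0\tilde\tau^0$. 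I would also record $1/\tau^k=a^k+\mu$, so that $\widetilde{\bD}^k_\tau=a^k\id_{nN}$ and $\bD^k_\tau=(a^k+\mu)\id_{nN}$.

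Granting these, the easy relations come first. Relation \eqref{cond-4-d} is the definition of $\eta^{k+1}$. Since $\kappa_i^k=\gamma^k\delta/C_{g_i}^2$, the matrices $\gamma^k\bD^k_\kappa=\diag([C_{g_i}^2/\delta\,\id_{m_i}]_{i\in\cN})$ and $\gamma^k\bD^k_\gamma=\id_{m_0}$ do not depend on $k$, so \eqref{cond-2-d} and \eqref{cond-3-d} hold with equality. For \eqref{cond-1-d} a one-line computation suffices: $\gamma^{k+1}/\tilde\tau^{k+1}=\gamma^{k+1}a^{k+1}=\gamma^{k+1}s^k a^k=\tfrac{(\gamma^{k+1})^2}{\gamma^k}a^k=\gamma^k(1+\mu/a^k)a^k=\gamma^k(a^k+\mu)=\gamma^k/\tau^k$, so \eqref{cond-1-d} also holds with equality.

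The substance is \eqref{cond-5-d} and \eqref{cond-6-d}. All matrices involved there are block diagonal with each block a multiple of $\id_n$, so both reduce to one scalar inequality per $i\in\cN$; bounding $(\eta^k)^2\le 1$, $1+\eta^k\le 2$ (using $\eta^k\in[0,1]$, justified below), $L_{f_i}\le L_{\max}(f)$, $L_{g_i}\le L_{\max}(G)$, and discarding the extra $\mu\ge0$ on the left of \eqref{cond-5-d}, it suffices to prove the single claim
\[
a^k\ \ge\ L_{\max}(f)+\alpha+2(1+\delta)\gamma^k+2B L_{\max}(G)\qquad\text{for all }k\ge 0 .
\]
I would prove this by induction. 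Put $C_0\triangleq L_{\max}(f)+\alpha+2B L_{\max}(G)\ge 0$ and $C_1\triangleq 2(1+\delta)$; the claim reads $a^k-C_1\gamma^k\ge C_0$. The base case $k=0$ is exactly $1/\tilde\tau^0\ge 1/\bar\tau$, which holds since $\tilde\tau^0\in(0,\bar\tau]$ with $\bar\tau$ as defined in Fig.~\ref{alg:PDD}. For the step, using $a^{k+1}=s^k a^k$, $\gamma^{k+1}=s^k\gamma^k$ and $s^k\ge 1$,
\[
a^{k+1}-C_1\gamma^{k+1}=s^k\bigl(a^k-C_1\gamma^k\bigr)\ \ge\ a^k-C_1\gamma^k\ \ge\ C_0 ,
\]
the first inequality using $a^k-C_1\gamma^k\ge 0$. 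This is the only place where something nontrivial happens, and it hinges precisely on $a^k$ and $\gamma^k$ sharing the multiplicative factor $s^k$; I expect this bookkeeping of the four coupled step-size recursions to be the main (mild) obstacle.

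For the remaining assertions: $\eta^{k+1}=\gamma^k/\gamma^{k+1}=1/s^k\in(0,1)$ for all $k\ge 0$ since $\mu\tilde\tau^k>0$ (and $\eta^0=0$), which retroactively justifies $\eta^k\in[0,1]$ used above; and $1/\tilde\tau^k<1/\tau^k$ because $1/\tau^k=1/\tilde\tau^k+\mu$. Finally, $a^{k+1}-a^k=\mu/(s^k+1)$ lies in $[\,\mu/(\sqrt{1+\mu\tilde\tau^0}+1),\ \mu/2\,]$ — the lower bound from $a^k\ge a^0$, the upper from $s^k a^k=\sqrt{(a^k)^2+\mu a^k}\le a^k+\mu/2$ — so $a^k=\Theta(k)$, hence $1/\tau^k=a^k+\mu=\Theta(k)$ and, using $\gamma^k=\gamma^0\tilde\tau^0 a^k$, also $\gamma^k=\Theta(k)$; alternatively one may simply invoke \cite[Corollary~1]{chambolle2011first}, whose step-size recursion is identical. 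The case $\ubar\mu>0$, $\alpha=0$ is handled verbatim, the formula for $\bar\tau$ in Fig.~\ref{alg:PDD} already being valid at $\alpha=0$. \qed
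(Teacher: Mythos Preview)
Your proof is correct and follows essentially the same route as the paper's: the paper likewise notes that \eqref{cond-2-d}--\eqref{cond-4-d} are immediate from the update rules, rewrites \eqref{cond-1-d} as $\frac{1}{\tilde\tau^k}+\mu\ge\frac{1}{\tilde\tau^{k+1}\eta^{k+1}}$ (which holds with equality), reduces \eqref{cond-5-d}--\eqref{cond-6-d} to the single scalar inequality $\frac{1}{\tilde\tau^k}\ge L_{\max}(f)+\alpha+2\big(\gamma^k(1+\delta)+BL_{\max}(G)\big)$ proved by induction, and defers the $\Theta(k)$ rates to \cite[Corollary~1]{chambolle2011first}. Your presentation is in fact more explicit---the observation that $a^k$ and $\gamma^k$ share the multiplicative factor $s^k$ is exactly what drives the induction, and your self-contained derivation of $a^{k+1}-a^k\in[\mu/(\sqrt{1+\mu\tilde\tau^0}+1),\,\mu/2]$ makes the rate claim independent of the cited reference.
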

\begin{proof}
The conditions \eqref{cond-2-d}, \eqref{cond-3-d}, and \eqref{cond-4-d} trivially hold from the step-size update rules of $\eta^{k+1}$ and \nsa{$\kappa_i^{k+1}$ for $i\in\cN$}. The condition \eqref{cond-1-d} can be equivalently written as $\frac{1}{\tilde{\tau}^k}+\mu\geq \frac{1}{\tilde{\tau}^{k+1}\eta^{k+1}}$, \nsa{which holds with equality since $1/\eta^{k+1}=\sqrt{1+\mu\tilde{\tau}^k}$}; moreover, \eqref{cond-5-d} and \eqref{cond-6-d} hold if $\frac{1}{\tilde{\tau}^k}\geq L_{\max}\nsa{(f)} 
+2(\gamma^k(1+\delta)+BL_{\max}(G))$, \nsa{which can be shown through induction and using $\{\gamma^k\}$ is increasing}. 

\saa{Finally, 
from step-size update rule in Fig.~\ref{alg:PDD}, we have $\gamma^{k+1}=\gamma^k\sqrt{1+\mu\tilde\tau^k}$ and $\gamma^{k+1}\tilde\tau^{k+1}=\gamma^k\tilde\tau^k$, for all $k\geq 0$, which implies that $\gamma^k\tilde\tau^k=\gamma^0\tilde\tau^0$. Hence, we conclude that $(\gamma^{k+1})^2=(\gamma^k)^2+\mu\gamma^0\tilde\tau^0\gamma^k$. Using induction one can show that $\frac{\mu\gamma^0\tilde{\tau}^0}{3}k\leq \gamma^k\leq \frac{\mu\gamma^0\tilde{\tau}^0}{2}k$
for any $k\geq 1$; hence, $\gamma^k=\Theta(\mu \tilde\tau^0\gamma^0 k)$ and $1/\tilde\tau^k=\gamma^k/(\tilde\tau^0\gamma^0)=\Theta(\mu k)$. }
\vspace*{-1mm}
\end{proof}
\vspace*{-5mm}
\nsa{\begin{lemma}\label{lem:bound-J-s}
For any $\bx\in\dom \rho$ and any positive step-size sequences $\{\tau^k,[\kappa_i^k]_{i\in\cN},\gamma^k,\nsa{\eta^k}\}_k$, we have
{
\begin{equation*}
\eta^k\fprod{\bJ T(\bx)^\top(\by-\bar{\by}),\bx-\bar\bx}\leq \tfrac{1}{2}\norm{\bx-\bar{\bx}}_{\bD^k}^2+\tfrac{1}{2}\norm{\by-\bar{\by}}^2_{\bD^k_{\kappa,\gamma}},
\end{equation*}}%
for any $\bar{\bx}\in\cX$ and $\by=\eyh{[{\btheta}^\top; {\bmu}^\top]}^\top,~\bar{\by}=\eyh{[\bar{\btheta}^\top ;\bar{\bmu}^\top]}^\top\in\cY$, where $\bD^k\triangleq(\eta^k)^2\diag([(C_{g_i}^2\kappa^k_i+\gamma^k)\id_{n}]_{i\in\cN})$. 
\end{lemma}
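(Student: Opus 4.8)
The plan is to exploit the block structure of the Jacobian $\bJ T$ and then apply Young's inequality componentwise with carefully matched weights. Since $T(\bx)=[G(\bx)^\top~\bx^\top]^\top$ with $G(\bx)=[g_i(x_i)]_{i\in\cN}$, the Jacobian has the block form $\bJ T(\bx)=\left[\begin{smallmatrix}\bJ G(\bx)\\ \id_{nN}\end{smallmatrix}\right]$, where $\bJ G(\bx)=\diag([\bJ g_i(x_i)]_{i\in\cN})$ is block diagonal; hence $\bJ T(\bx)^\top(\by-\bar\by)=\bJ G(\bx)^\top(\btheta-\bar\btheta)+(\bmu-\bar\bmu)$. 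Writing $\bmu-\bar\bmu=[\nu_i-\bar\nu_i]_{i\in\cN}$ and $\bx-\bar\bx=[x_i-\bar x_i]_{i\in\cN}$ with $\nu_i,\bar\nu_i,x_i,\bar x_i\in\reals^n$, the inner product decouples across nodes: $\fprod{\bJ T(\bx)^\top(\by-\bar\by),\ \bx-\bar\bx}=\sum_{i\in\cN}\big(\fprod{\theta_i-\bar\theta_i,\ \bJ g_i(x_i)(x_i-\bar x_i)}+\fprod{\nu_i-\bar\nu_i,\ x_i-\bar x_i}\big)$.

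Next I would bound each summand by Cauchy--Schwarz together with $\norm{\bJ g_i(x_i)}\leq C_{g_i}$, which holds for $\bx\in\dom\rho$ by Assumption~\ref{assump:g_i}, giving $\fprod{\theta_i-\bar\theta_i,\ \bJ g_i(x_i)(x_i-\bar x_i)}\leq C_{g_i}\norm{\theta_i-\bar\theta_i}\,\norm{x_i-\bar x_i}$ and $\fprod{\nu_i-\bar\nu_i,\ x_i-\bar x_i}\leq \norm{\nu_i-\bar\nu_i}\,\norm{x_i-\bar x_i}$. After multiplying through by $\eta^k>0$, I would apply Young's inequality twice per node, using the step-size $\kappa_i^k$ as the weight for the first term and $\gamma^k$ for the second, to get $\eta^k C_{g_i}\norm{\theta_i-\bar\theta_i}\,\norm{x_i-\bar x_i}\leq \tfrac{1}{2\kappa_i^k}\norm{\theta_i-\bar\theta_i}^2+\tfrac{(\eta^k)^2 C_{g_i}^2\kappa_i^k}{2}\norm{x_i-\bar x_i}^2$ and $\eta^k\norm{\nu_i-\bar\nu_i}\,\norm{x_i-\bar x_i}\leq \tfrac{1}{2\gamma^k}\norm{\nu_i-\bar\nu_i}^2+\tfrac{(\eta^k)^2\gamma^k}{2}\norm{x_i-\bar x_i}^2$.

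Finally I would sum over $i\in\cN$ and recognize the resulting three sums as the weighted norms appearing in the statement: $\sum_{i\in\cN}\tfrac{1}{\kappa_i^k}\norm{\theta_i-\bar\theta_i}^2+\sum_{i\in\cN}\tfrac{1}{\gamma^k}\norm{\nu_i-\bar\nu_i}^2=\norm{\by-\bar\by}_{\bD^k_{\kappa,\gamma}}^2$ by the definitions of $\bD^k_\kappa,\bD^k_\gamma$ in Definition~\ref{def:bregman}, and $\sum_{i\in\cN}(\eta^k)^2(C_{g_i}^2\kappa_i^k+\gamma^k)\norm{x_i-\bar x_i}^2=\norm{\bx-\bar\bx}_{\bD^k}^2$ with $\bD^k$ as defined in the lemma; this yields the claim with constant $\tfrac12$ on both terms. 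There is no substantive obstacle: the argument is a termwise Cauchy--Schwarz/Young estimate, and the only care required is to keep the per-node bookkeeping consistent so that the Young weights reassemble exactly into $\bD^k$ and $\bD^k_{\kappa,\gamma}$; note in particular that positivity of the step sizes is the only property of $\{\tau^k,[\kappa_i^k]_{i\in\cN},\gamma^k,\eta^k\}_k$ used.
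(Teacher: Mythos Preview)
Your proposal is correct and follows essentially the same approach as the paper: the paper writes the weighted Young inequality in matrix form, $\langle z,\bar z\rangle\le\tfrac12\|z\|^2_{F^{-1}}+\tfrac12\|\bar z\|^2_F$, applies it once with $F=\bD^k_\kappa$ to the $\btheta$-block and once with $F=\bD^k_\gamma$ to the $\bmu$-block, and then invokes the Jacobian bound $\|\bJ g_i(x_i)\|\le C_{g_i}$ at the end, whereas you unpack the blocks node-by-node, apply Cauchy--Schwarz with the Jacobian bound first, and then use scalar Young's inequality with the same weights. The bookkeeping and resulting terms are identical.
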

\begin{proof}
For $F\in\sy^{n}_{++}$, we have $\fprod{z,\bar{z}}\leq \tfrac{1}{2}\norm{z}^2_{F^{-1}}+\tfrac{1}{2}\norm{\bar{z}}^2_{F}$ for any $z,\bar{z}\in\reals^n$. Using this inequality twice, we get
{
\begin{eqnarray*}
\lefteqn{\eta^k\fprod{\bJ T(\bx)^\top(\by-\bar{\by}),\bx-\bar\bx}}
\nonumber\\
& &=\eta^k\fprod{\bJ G(\bx)(\bx-\bar\bx),\btheta-\bar{\btheta}} +\eta^k\fprod{\bx-\bar\bx,\bmu-\bar{\bmu}} \nonumber\\
& &\leq \tfrac{(\eta^k)^2}{2}\norm{\bJ G(\bx)(\bx-\bar\bx)}^2_{(\bD^k_\kappa)^{-1}}+\tfrac{1}{2}\norm{\btheta-\bar{\btheta}}^2_{\bD_\kappa^k}+\tfrac{(\eta^k)^2\gamma^k}{2}\norm{\bx-\bar\bx}^2+\tfrac{1}{2\gamma^k}\norm{\bmu-\bar{\bmu}}^2
\end{eqnarray*}}%
where in the first inequality we used \ehh{$F=\bD_\kappa^k/\eta^k$} for the first term and \ehh{$F=\bD_\gamma^k/\eta^k$} for the second one. 
The result follows from the fact that $g_i$ has a bounded Jacobian for any $i\in\cN$ -- see Assumption~\ref{assump:g_i}.
\end{proof}}%
\vspace*{-2mm}
\sa{
Next, we appropriately bound $\mathcal{L}(\bar{\bx}^K,\by)-\mathcal{L}(\bx^*,\bar{\by}^K)$ for all $\by\in\cY$ and \saa{$\|\bx^K-\bx^*\|$} for all $K\geq 1$, and we also \saa{account} for the approximation errors for the time-varying case, arising due to use of $\cR^k$ which allows for distributed computation.}
\begin{lemma}\label{thm:dynamic-rate}
Let $\{\bx^k,\by^k\}_{k\geq 0}$ be the iterate sequence generated by Algorithm DPDA-TV {using positive step-size sequences, $\{\tau^k,[\kappa_i^k]_{i\in\cN},\gamma^k,\nsa{\eta^k}\}_k$}, as displayed in Fig.~\ref{alg:PDD} \nsa{for some $B>0$} where $\by^k=\eyh{[{\btheta^k}^\top; {\bmu^k}^\top]}^\top$ for $k\geq 0$. Let \saa{$\{\be^k\}_{k\geq 1}$} be 
as in \eqref{eq:prox-error-seq-1} and \nsa{$B^k\triangleq\|\btheta^k\|$ for $k\geq -1$}. Then for any $k\geq 0$ and any $\by\in\cY$,
$\{\bx^k,\by^k\}_{k\geq 0}$ satisfies
{
\begin{align}\label{lemeq:inexact-lagrangian-bound}
&\saa{\mathcal{L}(\bx^{k+1},\by)-\mathcal{L}(\bx^*,\by^{k+1})\leq }\\ 
&  \mbox{} E^{k+1}(\bmu)+ \Big[\tfrac{1}{2}\|\bx^*-\bx^{k}\|_{\mathbf{\widetilde{D}}^k_{\tau}}^2+\tfrac{1}{2}\|\by-\by^k\|_{\bD^k_{\kappa,\gamma}}^2
+\tfrac{1}{2}\|\by^k-\by^{k-1}\|^2_{\bD^k_{\kappa,\gamma}} \nonumber\\
&\mbox{} +{\eta^k}\fprod{ \bJ T(\bx^k)^\top\by^k-\bJ T(\bx^{k-1})^\top\by^{k-1},~\bx^*-\bx^k} \nonumber\\
&\mbox{}+\tfrac{1}{2}\|\bx^k-\bx^{k-1}\|^2_{\eta^kB^{k-1}\bL_G}\Big] - \Big[\tfrac{1}{2}\|\bx^*-\bx^{k+1}\|_{\bD^k_{\tau}}^2 +\tfrac{1}{2}\|\by-\by^{k+1}\|_{\bD^k_{\kappa,\gamma}}^2+\tfrac{1}{2}\|\by^{k+1}-\by^{k}\|^2_{\bD^k_{\kappa,\gamma}} \nonumber \\
&  \mbox{} +\fprod{\bJ T({\bx^{k+1}})^\top\by^{k+1}-\bJ T({\bx^{k}})^\top\by^{k},~\bx^*-{\bx^{k+1}}} +\tfrac{1}{2}\|\bx^{k+1}-\bx^k\|^2_{\mathbf{\bar{D}}^k_{\tau}-(\eta^k)^2\gamma^k(\delta+1)\id_{nN}-\eta^kB^{k-1}\bL_G} \Big], \nonumber
\end{align}
}%
where 
 $E^{k+1}(\bmu)\triangleq\|\nsa{\be^{k+1}}\| \left({4}\gamma^{k}\sqrt{N}~\Delta+\|\bmu-\bmu^{k+1}\|\right)$.

\end{lemma}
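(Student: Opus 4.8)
The plan is to treat Algorithm~DPDA-TV as an \emph{inexact} instance of the reversed APD recursion~\eqref{eq:PDA-2} applied to the saddle-point problem~\eqref{eq:dynamic-saddle}, and to reproduce, term by term, the one-step energy inequality underlying~\eqref{eq:DPA-rate} from~\cite{hamedani2018primal}, carefully propagating the perturbations caused by replacing the exact updates~\eqref{eq:pock-pd-2-x} and~\eqref{eq:pock-pd-2-lambda} with the computable surrogates~\eqref{eq:inexact-rule-xi} and~\eqref{eq:inexact-rule-mu}. Throughout I will use that, for~\eqref{eq:dynamic-saddle}, $\nabla_\bx\cH(\bx,\by)=\blambda+\bJ G(\bx)^\top\btheta=\bJ T(\bx)^\top\by$ with $T(\bx)=[G(\bx)^\top~\bx^\top]^\top$, so the extrapolation vector in~\eqref{eq:pock-pd-2} is exactly $\bp^k=(1+\eta^k)\bJ T(\bxi^k)^\top\by^k-\eta^k\bJ T(\bxi^{k-1})^\top\by^{k-1}$. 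Writing $a^k\triangleq\bJ T(\bxi^k)^\top\by^k$ and invoking the identity $(1+\eta^k)a^k-\eta^k a^{k-1}=a^{k+1}-(a^{k+1}-a^k)+\eta^k(a^k-a^{k-1})$ will, after a telescoping rearrangement, produce the cross terms $\fprod{a^{k+1}-a^k,\bx^*-\bxi^{k+1}}$ and $\eta^k\fprod{a^k-a^{k-1},\bx^*-\bxi^k}$ that appear in the two bracketed groups of~\eqref{lemeq:inexact-lagrangian-bound}.

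First I would record the three-point optimality inequality for each prox-type subproblem of~\eqref{eq:pock-pd-2}. The $\btheta$-step~\eqref{eq:pock-pd-2-theta} is computed \emph{exactly}; the standard prox inequality for $\sigma_{-\cK_i}$ evaluated at $\theta_i=\theta_i^*$ contributes $\sum_{i\in\cN}\sigma_{-\cK_i}(\theta_i^{k+1})$ on the left, the coupling term $-\fprod{g_i(\xi_i^{k+1}),\theta_i^{k+1}-\theta_i^*}$, and the $\bD^k_\kappa$-proximity differences on the right. For the $\bx$-step, the exact minimizer $\bx^{k+1}$ of~\eqref{eq:pock-pd-2-x} satisfies, at $\bx=\bx^*$, an analogous inequality with modulus $1/\tau^k$; since DPDA-TV actually outputs $\bxi^{k+1}=\bx^{k+1}+\be_3^{k+1}$ and uses $\nabla f(\bxi^k)+\alpha(\bxi^k-\cR^k(\bxi^k))$ in place of $\nabla f_\alpha(\bxi^k)=\nabla f(\bxi^k)+\alpha(\bxi^k-\cP_\cC(\bxi^k))$, I substitute $\bxi^{k+1}$ for $\bx^{k+1}$ and absorb the discrepancy --- an inner product of $\be_3^{k+1}$ with $\tfrac{1}{\tau^k}(\bxi^k-\bx^{k+1})$ and the gradient error (which has norm $\alpha\norm{\be_2^{k+1}}$ by~\eqref{eq:prox-error-seq-1}) --- into a quantity bounded by $\norm{\be_3^{k+1}}\big(\tfrac{2}{\tau^k}\sqrt N\Delta+\alpha\norm{\be_2^{k+1}}\big)=E_2^{k+1}$, using the compactness bound $\norm{\bxi^k}\le\sqrt N\Delta$. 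Convexity of $\rho$ and $\mu$-strong convexity of $f_\alpha$ (Lemma~\ref{lem:restricted-convex-general}) then turn the left side into $\Phi(\bxi^{k+1})-\Phi(\bx^*)$ plus the coupling term $\fprod{\bp^k,\bxi^{k+1}-\bx^*}$ and the $\mathbf{\widetilde{D}}^k_\tau$/$\bD^k_\tau$ proximity differences. For the $\bmu$-step, I take the prox inequality for $\sigma_{\Ct}$ at $\blambda$, then use $\bmu^{k+1}=\blambda^{k+1}+\gamma^k\be_1^{k+1}$ together with~\eqref{eq:support-error} and the crude bound~\eqref{eq:mu-bound} on $\norm{\bmu^{k+1}}$ to replace $\blambda^{k+1}$ by $\bmu^{k+1}$ at the cost of a term bounded by $\norm{\be_1^{k+1}}\big(4\gamma^k\sqrt N\Delta+\norm{\bmu-\bmu^{k+1}}\big)=E_1^{k+1}(\bmu)$, obtaining $\sigma_{\Ct}(\bmu^{k+1})$ on the left, the coupling term $-\fprod{\bxi^{k+1},\bmu^{k+1}-\bmu}$, and the $\bD^k_\gamma$ proximity differences on the right.

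Next I would add the three inequalities and reorganize. The coupling contributions combine, via $\cK_i$-convexity of $g_i$ (Remark~\ref{rem:K-convex}) and linearity of the $\fprod{\blambda,\cdot}$ part, into $\big[\Phi(\bxi^{k+1})+\fprod{\bmu,\bxi^{k+1}}-\sigma_{\Ct}(\bmu)+\sum_{i\in\cN}(\fprod{\theta_i,g_i(\xi_i^{k+1})}-\sigma_{-\cK_i}(\theta_i))\big]-\big[\text{same at }(\bx^*,\by^{k+1})\big]=\cL(\bxi^{k+1},\by)-\cL(\bx^*,\by^{k+1})$, up to second-order-in-$\bxi$ remainders arising because $\cH$ is not bilinear and $\bp^k$ is built from Jacobians at $\bxi^k,\bxi^{k-1}$ rather than at $\bxi^{k+1}$. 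These remainders --- after the telescoping rearrangement of $\bp^k$ described above --- reduce to $\eta^k\fprod{\bJ T(\bxi^k)^\top(\by^k-\by^{k-1}),\bx^*-\bxi^k}$-type expressions, which I bound with Lemma~\ref{lem:bound-J-s}; this converts them into $\tfrac12\norm{\bxi^k-\bxi^{k-1}}^2_{\eta^kB^{k-1}\bL_G}$-style quadratics (absorbed through the $\cA^k$ and $B^{k-1}\bL_G$ pieces of $\mathbf{\bar{D}}^k_\tau$) and a harmless $\tfrac12\norm{\by-\by^k}^2_{\bD^k_{\kappa,\gamma}}$ contribution. Finally, the step-size relations~\eqref{eq:cond-d} of Lemma~\ref{lem:step-size} guarantee the resulting quadratic forms ($\mathbf{\bar{D}}^k_\tau$, $\cA^k$, $\bD^k_{\kappa,\gamma}$) have the signs needed to display the right-hand side precisely as the difference of the two bracketed ``energy'' groups in~\eqref{lemeq:inexact-lagrangian-bound}.

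The main obstacle I anticipate is the error bookkeeping: one must verify that $\be_1^{k+1},\be_2^{k+1},\be_3^{k+1}$ enter only \emph{linearly} (so that the summability hypothesis $\sum_k\beta^{q_{k-1}}k^4<\infty$ of Theorem~\ref{thm:dynamic-error-bounds} suffices), which forces every error factor to be paired with a \emph{bounded} quantity --- this is exactly where the compact-domain assumption $\norm{x_i}\le\Delta$, the dual bound~\eqref{eq:mu-bound}, and the non-expansiveness of $\cP_\cB$ are indispensable, and it is also why $E_1^{k+1}$ retains the extra $\norm{\bmu-\bmu^{k+1}}$ term rather than being fully constant. A secondary subtlety is index consistency: since DPDA-TV performs the $\bx$-update before the $\btheta$-update, the Lipschitz modulus entering the $k$-th $\bxi$-proximity involves $B^{k-1}=\norm{\btheta^{k-1}}$ rather than $\norm{\btheta^k}$, which is why $\eta^kB^{k-1}\bL_G$ and $\mathbf{\bar{D}}^k_\tau-(\eta^k)^2\gamma^k(\delta+1)\id_{nN}-\eta^kB^{k-1}\bL_G$ carry the shifted index; matching these shifted indices between the step-$k$ and step-$(k+1)$ brackets so that a clean telescoping survives is the delicate part of the merge, and it is precisely this reordering that later allows the induction bound $\norm{\btheta^k}\le B$ to close in the proof of Theorem~\ref{thm:dynamic-error-bounds}.
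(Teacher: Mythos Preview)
Your proposal follows essentially the same route as the paper: three-point prox inequalities for each of the $\btheta$-, $\bmu$-, and $\bx$-subproblems, error substitution $\blambda^{k+1}\to\bmu^{k+1}$ and $\bx^{k+1}\to\bxi^{k+1}$ producing $E_1^{k+1}(\bmu)$ and $E_2^{k+1}$, summation into a Lagrangian inequality, and then a telescoping rearrangement of the $\bp^k$ contribution with the residual bounded via Lemma~\ref{lem:bound-J-s}. Two points of bookkeeping are off, however, and would trip you up if executed literally.

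First, after your telescoping identity the cross terms $\fprod{a^{k+1}-a^k,\bx^*-\bxi^{k+1}}$ and $\eta^k\fprod{a^k-a^{k-1},\bx^*-\bxi^k}$ are precisely the inner products that \emph{remain} in the two brackets of~\eqref{lemeq:inexact-lagrangian-bound}; they are not bounded. What must be bounded is the leftover $\eta^k\fprod{a^k-a^{k-1},\,\bxi^k-\bxi^{k+1}}$. The paper splits this as
\[
\eta^k\fprod{\bJ T(\bxi^k)^\top(\by^k-\by^{k-1}),\,\bxi^k-\bxi^{k+1}}
+\eta^k\fprod{(\bJ T(\bxi^k)-\bJ T(\bxi^{k-1}))^\top\by^{k-1},\,\bxi^k-\bxi^{k+1}}.
\]
Lemma~\ref{lem:bound-J-s} handles the first piece, producing $\tfrac12\|\by^k-\by^{k-1}\|^2_{\bD^k_{\kappa,\gamma}}$ and the $(\eta^k)^2\gamma^k(\delta+1)$ quadratic in $\bxi^{k+1}-\bxi^k$ (not $\tfrac12\|\by-\by^k\|^2$ as you wrote). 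The $\eta^kB^{k-1}\bL_G$ quadratics in $\bxi^k-\bxi^{k-1}$ and $\bxi^{k+1}-\bxi^k$ come from the \emph{second} piece, via Lipschitz continuity of $\bJ G$ and $\|\btheta^{k-1}\|=B^{k-1}$ together with Young's inequality---not from Lemma~\ref{lem:bound-J-s}.

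Second, the step-size conditions~\eqref{eq:cond-d} of Lemma~\ref{lem:step-size} are not used in this lemma at all; \eqref{lemeq:inexact-lagrangian-bound} holds for arbitrary positive step-size sequences. Those conditions enter only afterwards, in the proof of Theorem~\ref{thm:dynamic-error-bounds}, to make the bracketed groups telescope across~$k$. (Relatedly, the bound~\eqref{eq:mu-bound} is not needed to derive $E_1^{k+1}(\bmu)$ either---the paper uses only $\|\cP_{\Ct}(\bom^k)\|\le 2\sqrt N\Delta$ and Cauchy--Schwarz there; \eqref{eq:mu-bound} is invoked later to control $\|\be_1^{k+1}\|$.)
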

\begin{proof}
{Fix $\by=\eyh{[\btheta^\top;\bmu^\top]}^\top\in\cY$.} 
{It follows from~\eqref{eq:pock-pd-2-lambda} that using} strong convexity of ${\sigma_{\Ct}}({\bmu})-\langle \bx^{k+1},~{\bmu}\rangle+\frac{1}{ 2\gamma^k}\|{\bmu}-{\bmu}^k\|^2$ in $\bmu$ and the fact that ${\blambda}^{k+1}$ is its minimizer, we conclude that
{
\begin{align*}
&{ {\sigma_{\Ct}}(\bmu)-\langle {\bx^{k+1}},~\bmu\rangle+\tfrac{1}{2\gamma^k}\|{\bmu}-{\bmu}^k\|^2 \geq } \\
& {\sigma_{\Ct}}({\blambda}^{k+1})-\langle {\bx^{k+1}},~{\blambda}^{k+1}\rangle+\tfrac{1}{2\gamma^k}\|{\blambda}^{k+1}-{\bmu}^k\|^2  +\tfrac{1}{2\gamma^k}\|\bmu-{\blambda}^{k+1}\|^2.
\end{align*}}%
According to \eqref{eq:prox-error-seq-1}, ${\bmu}^{k+1}={\blambda}^{k+1}+\gamma^k \be^{k+1}$ for all $k\geq 1$; hence, from \eqref{eq:support-error} we have 
{
\begin{eqnarray}\label{eq:support-function-bound}
\lefteqn{\sigma_\cC({\bmu})-\langle \bx^{k+1},~{\bmu}\rangle+\tfrac{1}{2\gamma^k}\|{\bmu}-{\bmu}^k\|^2 \geq}\nonumber\\
& & \sigma_\cC({\bmu}^{k+1})-\langle \bx^{k+1},~{\bmu}^{k+1}\rangle +\tfrac{1}{2\gamma^k}\|{\bmu}^{k+1}-{\bmu}^k\|^2 +\tfrac{1}{2\gamma^k}\|{\bmu}-{\bmu}^{k+1}\|^2- \saa{S^{k+1}(\bmu)},
\end{eqnarray}
}%
where  
\saa{$S^{k+1}(\bmu)\triangleq 2\gamma^k \sqrt{N}~\Delta \|\be^{k+1}\|-\gamma^k\|\be^{k+1}\|^2  \nsa{-}\fprod{\be^{k+1},~\bmu-2\bmu^{k+1}+\bmu^k+\gamma^k\bx^{k+1}}$}.
For $k\geq 0$, we have $\bmu^k+\gamma^k\bx^{k+1}=\gamma^k\bom^k$, $\bmu^{k+1}=\blambda^{k+1}+\gamma^k\be^{k+1}$, and $\blambda^{k+1}=\gamma^k(\bom^k-\cP_{\Ct}(\bom^k))$; 
hence, $\bmu^k+\gamma^k\bx^{k+1}-\bmu^{k+1}=\gamma^k(\cP_{\Ct}(\bom^k)-\be^{k+1})$.

Thus,
\setlength{\arraycolsep}{0.0em}
{
\begin{align}\label{eq:Sk-bound}
S^{k+1}(\bmu)&={2\gamma^k\sqrt{N}~\Delta}~\|\be^{k+1}\|\nsa{-}\fprod{\be^{k+1},~\bmu-\bmu^{k+1}+\gamma^k{\cP_{\Ct}(\bom^k)}}\nonumber\\
&\leq E^{k+1}(\bmu),
\end{align}}%
\setlength{\arraycolsep}{5pt}%
where the inequality follows from Cauchy-Schwarz and $\|\cP_{\Ct}(\bom^k)\|\leq 2\sqrt{N}~\Delta$ since $\cP_{\Ct}(\bom^k)\in \Ct$.
Moreover, it follows from the strong convexity of the objective in \eqref{eq:pock-pd-2-theta} that
{
\begin{eqnarray*}
\lefteqn{\sigma_{-\cK_i}(\theta_i)-\fprod{{g_i(x_i^{k+1})},\theta_i}+\tfrac{1}{2\kappa_i^k}\|\theta_i-\theta_i^k\|^2}\\
& & \geq \sigma_{-\cK_i}(\theta_i^{k+1})-\fprod{{g_i(x_i^{k+1})},\theta_i^{k+1}}+\tfrac{1}{2\kappa_i^k}\|\theta_i^{k+1}-\theta_i^k\|^2 +\tfrac{1}{2\kappa_i^k}\|\theta_i-\theta_i^{k+1}\|^2.\vspace*{-2mm}
\end{eqnarray*}}%
Summing the above inequality over $i\in\cN$, then summing the resulting inequality with \eqref{eq:support-function-bound} and using \eqref{eq:Sk-bound}, lead to \vspace*{-2mm}
{
\begin{eqnarray}\label{eq:lemma-y}
\lefteqn{h(\by)+\tfrac{1}{2}\|\by-\by^k\|_{\bD^k_{\kappa,\gamma}}^2+E^{k+1}(\bmu)\geq -\fprod{{T(\bx^{k+1})},~\by^{k+1}-\by}} \nonumber\\
& &\hspace*{-2mm}+h(\by^{k+1})+\tfrac{1}{2}\|\by-\by^{k+1}\|_{\bD^k_{\kappa,\gamma}}^2+ \tfrac{1}{2}\|\by^{k+1}-\by^k\|_{\bD^k_{\kappa,\gamma}}^2.
\end{eqnarray}}%
\vspace*{-2mm}
Next, strong convexity of the objective in \eqref{eq:pock-pd-2-x} implies that
{
\begin{eqnarray}\label{eq:rho}
\lefteqn{\rho(\bx^*)+\fprod{\grad f(\bx^k)+\bp^{k},~\bx^*}
+\tfrac{1}{2\tau^k}\|\bx^*-\bx^k\|^2}\nonumber\\
& & \geq \rho(\bx^{k+1})+\langle\grad f(\bx^k)+\bp^{k},~\bx^{k+1}\rangle
+\tfrac{1}{2\tau^k}\|\bx^{k+1}-\bx^k\|^2+\tfrac{1}{2\tau^k}\|\bx^*-\bx^{k+1}\|^2.
\end{eqnarray}}%
Next, note that since each $\grad f_i$ 
is $L_{f_i}$-Lipschitz, 
we have for any $\bx$ and $\bar{\bx}$ that
{
\begin{align}\label{eq:f-lip}
{f(\bx)\leq f(\bar{\bx})+\fprod{\nabla f(\bar{\bx}),~\bx-\bar{\bx}}+\sum_{i\in\cN}\tfrac{L_{f_i}}{2}\norm{x_i-\bar{x}_i}^2.}
\end{align}
}%
{Define $\bL=\diag([L_{f_i}\id_n]_{i\in\cN})$.} It follows from strong convexity of $\bar{f}$ that
for any $\mu\in(0,\ubar{\mu}]$ we have
{
\setlength{\arraycolsep}{0.0em}
\begin{eqnarray}\label{eq:g-strong}
f(\bx^*)&{}\geq{}& f(\bx^{k})+\fprod{\grad f(\bx^{k}),~\bx^*-\bx^{k}} +\tfrac{\mu}{2}\|\bx^*-\bx^{k}\|^2 \nonumber \\
&{}\geq{}& f(\bx^{k+1})+\fprod{\grad f(\bx^{k}),~\bx^*-\bx^{k+1}} +\tfrac{\mu}{2}\|\bx^*-\bx^{k}\|^2\nonumber \\
& & {-} \: \tfrac{1}{2}\|\bx^{k+1}-\bx^k\|^2_{\bL}.
\end{eqnarray}
\setlength{\arraycolsep}{5pt}}%
{where the last inequality follows from \eqref{eq:f-lip}.} 
Next, summing inequalities 
\eqref{eq:rho} and \eqref{eq:g-strong}, 
we get
{
\begin{align}\label{eq:lemma-x}
&\saa{\varphi(\bx^*)}+\tfrac{1}{2}\|\bx^*-\bx^{k}\|_{\mathbf{\widetilde{D}}^k_{\tau}}^2 
\geq \fprod{\bx^{k+1}-\bx^*,~\bp^{k}}\nonumber \\
&\qquad+\saa{\varphi(\bx^{k+1})}+\tfrac{1}{2}\|\bx^*-\bx^{k+1}\|_{\bD^k_{\tau}}^2+\tfrac{1}{2}\|\bx^{k+1}-\bx^k\|^{{2}}_{\mathbf{\bar{D}}_\tau^k}. 
\end{align}}%
Next, summing \eqref{eq:lemma-y} and \eqref{eq:lemma-x}, and rearranging terms, we obtain
{
\begin{eqnarray}\label{eq:inexact-lagrangian-bound}
\lefteqn{\mathcal{L}(\bx^{k+1},\by)-\mathcal{L}(\bx^*,\by^{k+1})\leq } 
\nonumber \\
& & E_1^{k+1}(\bmu)+\bigg[\tfrac{1}{2}\|\bx^*-\bx^{k}\|_{\mathbf{\widetilde{D}}^k_{\tau}}^2+\tfrac{1}{2}\|\by-\by^k\|_{\bD^k_{\kappa,\gamma}}^2\bigg]- \bigg[\tfrac{1}{2}\|\bx^*-\bx^{k+1}\|_{\bD^k_{\tau}}^2+\tfrac{1}{2}\|\by-\by^{k+1}\|_{\bD^k_{\kappa,\gamma}}^2 \nonumber \\
& &+\tfrac{1}{2}\|\by^{k+1}-\by^k\|_{\bD^k_{\kappa,\gamma}}^2+\tfrac{1}{2}\|\bx^{k+1}-\bx^k\|^2_{\mathbf{\bar{D}}^k_{\tau}}\bigg] +\fprod{\bp^k,\bx^*-\bx^{k+1}}+ \fprod{T(\bx^{k+1})-T(\bx^*),\by^{k+1}}.
\end{eqnarray}
}%
Recall \nsa{$\bp^{k}= (1+\eta^k)\bJ T(\bx^k)^\top\by^{k}-\eta^k\bJ T(\bx^{k-1})^\top\by^{k-1}$ where $\bJ T(\bx^k)^\top=[\bJ G(\bx^k)^\top~\id_{n|\cN|}]$} and note that { $\fprod{T(\bx^{k+1})-T(\bx^*),\by^{k+1}}=\fprod{G(\bx^{k+1})-G(\bx^*),\btheta^{k+1}}+\fprod{\bx^{k+1}-\bx^*,\bmu^{k+1}}$}. \nsa{Let $\cK\triangleq \Pi_{i\in\cN}\cK_i$ and note that {$G(\cdot)$} is $\cK$-convex. Therefore, using Remark~\ref{rem:K-convex}
and the fact that $\btheta^{k+1}\in\cK^*$}, we can 
bound the two inner products in \eqref{eq:inexact-lagrangian-bound}: 
{
\begin{align}\label{eq:inner-product-d}
& \fprod{T(\bx^{k+1})-T(\bx^*),~\by^{k+1}}+\fprod{\bp^k,~\bx^*-\bx^{k+1}}\nonumber\\
\leq & \fprod{\bJ T(\bx^{k+1})(\bx^{k+1}-\bx^*),\by^{k+1}}+\fprod{\bJ T(\bx^{k})(\bx^*-\bx^{k+1}),~\by^{k}} \nonumber\\
 & \mbox{ }+\eta^k\fprod{\bJ T(\bx^{k})^\top\by^k-\bJ T(\bx^{k-1})^\top\by^{k-1},~\bx^{*}-\bx^{k+1}}\nonumber\\
=&\fprod{\bJ T(\bx^{k+1})^\top\by^{k+1}-\bJ T(\bx^{k})^\top\by^{k},~\bx^{k+1}-\bx^*}\nonumber\\
 & \mbox{ }+\eta^k\fprod{\bJ T(\bx^{k})^\top\by^k-\bJ T(\bx^{k-1})^\top\by^{k-1},~\bx^{*}-\bx^{k}}\nonumber\\
 & \mbox{ }+\eta^k\fprod{\bJ T(\bx^{k})^\top\by^k-\bJ T(\bx^{k-1})^\top\by^{k-1},~\bx^{k}-\bx^{k+1}}.
\end{align}}%
{Moreover, the last inner product in \eqref{eq:inner-product-d} can be bounded as 
{
\begin{eqnarray}\label{eq:ineq-d}
\lefteqn{\eta^k\fprod{\bJ T(\bx^{k})^\top\by^k-\bJ T(\bx^{k-1})^\top\by^{k-1},~\bx^{k}-\bx^{k+1}}} \nonumber\\
&&=\eta^k\fprod{\bJ T(\bx^{k})^\top(\by^k-\by^{k-1}),~\bx^{k}-\bx^{k+1}}+\eta^k\fprod{\bJ T(\bx^{k})^\top\by^{k-1}-\bJ T(\bx^{k-1})^\top\by^{k-1},~\bx^{k}-\bx^{k+1}}\nonumber\\
&&\leq\eta^k\fprod{\bJ T(\bx^{k})^\top(\by^k-\by^{k-1}),~\bx^{k}-\bx^{k+1}}+\tfrac{\eta^k}{2}\|\bx^k-\bx^{k-1}\|_{B^{k-1}\bL_G}^2+\tfrac{\eta^k}{2}\|\bx^{k+1}-\bx^k\|_{B^{k-1}\bL_G}^2 \nonumber\\
&&\leq \tfrac{1}{2}\|\bx^{k+1}-\bx^{k}\|^2_{(\eta^k)^2\gamma^k(\delta+1)}+\tfrac{1}{2}\|\by^{k}-\by^{k-1}\|_{\bD^k_{\kappa,\gamma}}^2+\tfrac{\eta^k}{2}\|\bx^k-\bx^{k-1}\|_{B^{k-1}\bL_G}^2 \nonumber\\
&&\mbox{ }+\tfrac{\eta^k}{2}\|\bx^{k+1}-\bx^k\|_{B^{k-1}\bL_G}^2,
\end{eqnarray}}%
where in the first inequality we used Lipschitz continuity of $\bJ G$, the fact that $\bJ T(\bx^k)-\bJ T(\bx^{k-1})=\eyh{[(\bJ G(\bx^k)-\bJ G(\bx^{k-1}))^\top ; \boldsymbol{0}]}^\top$, and $B^{k-1}=\|{\btheta^{k-1}}\|$, and the last inequality
follows from Lemma \ref{lem:bound-J-s} 
and using the fact that
\nsa{$\delta\gamma^k\geq \max_{i\in\cN}{\kappa_i^k C_{g_i}^2}$.} 
Therefore, using \eqref{eq:ineq-d} within \eqref{eq:inner-product-d} and substituting the result in \eqref{eq:inexact-lagrangian-bound} lead to the desired result.}
\end{proof}
Now we are ready to prove Theorem~\ref{thm:dynamic-error-bounds}.\vspace*{-5mm}
\subsection{Proof of Theorem~\ref{thm:dynamic-error-bounds}}
Under Assumption~\ref{assump:saddle-point}, a saddle-point $(\bx^*,\by^*)$ for $\min_{\bx\in\cX}\max_{\by\in\cY}\cL(\bx,\by)$ in~\eqref{eq:dynamic-saddle} exists, where $\by^*=\eyh{[{\btheta^*}^\top;{\blambda^*}^\top]}^\top$; moreover,  \nsa{$(\bx^*,\btheta^*,\blambda^*)$ is a saddle-point of $\cL$ in~\eqref{eq:dynamic-saddle} if and only if $\bx^*=\one \otimes x^*$ such that $(x^*,\btheta^*)$ is a primal-dual solution to \eqref{eq:central_problem} and $\blambda^*\in\cC^\circ$, i.e., $\sum_{i\in\cN}\lambda_i^*=\mathbf{0}$. Therefore, through computing a saddle point to \eqref{eq:dynamic-saddle}, we indeed solve \eqref{eq:central_problem}. It is also worth emphasizing that if $(\bx^*,\btheta^*,\blambda^*)$ is a saddle-point of $\cL$ such that $\blambda^*\neq\mathbf{0}$, then it trivially follows that $(\bx^*,\btheta^*,\mathbf{0})$ is another saddle-point of $\mathcal{L}$.}

{We use induction to 
show that there exists some $B>0$ such that $\nsa{B^k\triangleq}\|\btheta^k\|\leq B$. In fact, since $\btheta^0=\boldsymbol{0}$, the bound holds trivially \nsa{for $k=0$ for any $B>0$. Given some $K\geq 1$} we assume that $
\nsa{B^k}\leq B$ holds for some $B>0$ for all $0\leq k\leq K-1$, and we will show it also holds for $k=K$.}
From the induction assumption, we have $(\eta^k)^2\gamma^k(\delta+1)\id_{nN} +\eta^kB^{k-1}\bL_G\preceq\cA^k$ in \eqref{lemeq:inexact-lagrangian-bound} for $0\leq k\leq K-1$.
Moreover, note the \nsa{step-sizes chosen as in Fig.~\ref{alg:PDD} satisfies the conditions in \eqref{eq:cond-d} for any $B\geq 0$}. Therefore, multiplying both sides of \eqref{lemeq:inexact-lagrangian-bound} by $\frac{\gamma^k}{\gamma^0}$ and using Lemma \ref{lem:step-size}, \nsa{for $0\leq k\leq K-1$}, we get
{
\begin{align}\label{eq:lagrangian-sum}
&\tfrac{\gamma^k}{\gamma^0}[\cL(\bx^{k+1},\by)-\cL(\bx^*,\by^{k+1})]\leq  
\nonumber\\
&\mbox{}\tfrac{\gamma^k}{\gamma^0}E^{k+1}(\bmu)+ \tfrac{\gamma^k}{\gamma^0}\bigg[\tfrac{1}{2}\|\bx^*-\bx^{k}\|_{\mathbf{\widetilde{D}}^k_{\tau}}^2
+\tfrac{1}{2}\|\by-\by^k\|_{\bD^k_{\kappa,\gamma}}^2 +\tfrac{1}{2}\|\by^k-\by^{k-1}\|^2_{\bD^k_{\kappa,\gamma}}\nonumber \\
&\mbox{}   +{\eta^k}\fprod{ \bJ T(\bx^k)^\top\by^k-\bJ T(\bx^{k-1})^\top\by^{k-1},~\bx^*-{\bx^k}} \nonumber \\
&\mbox{}  +\tfrac{1}{2}\|\bx^k-\bx^{k-1}\|^2_{\eta^kB\bL_G} \bigg]- \tfrac{\gamma^{k+1}}{\gamma^0} \bigg[\tfrac{1}{2}\|\bx^*-\bx^{k+1}\|_{\mathbf{\widetilde{D}}^{k+1}_{\tau}}^2  +\tfrac{1}{2}\|\by-\by^{k+1}\|_{\bD^{k+1}_{\kappa,\gamma}}^2+\tfrac{1}{2}\|\by^{k+1}-\by^{k}\|^2_{\bD^{k+1}_{\kappa,\gamma}}\nonumber\\
&\mbox{}  +\eta^{k+1}\fprod{ \bJ T(\bx^{k+1})^\top\by^{k+1}-\bJ T(\bx^{k})^\top\by^{k},~\bx^*-\bx^{k+1}} +\tfrac{1}{2}\|\bx^{k+1}-\bx^k\|^2_{\eta^{k+1}B\bL_G}\bigg].
\end{align}}%
We sum~\eqref{eq:lagrangian-sum} from $k=0$ to $K-1$; using Jensen inequality, and the fact $\bx^{-1}=\bx^0$ and \nsa{$\by^{-1}=\by^0$}, we get
{
\begin{align}\label{eq:dynamic-saddle-rate}
&{2 \cW_K\big(\mathcal{L}(\bar{\bx}^K,\by)-\mathcal{L}(\bx^*,\bar{\by}^K)\big) \leq } \nonumber \\
& \sum_{k=0}^{K-1}\frac{\gamma^k}{\gamma^0}E_1^{k+1}(\bmu) 
+ \|\bx^*-\bx^{0}\|_{{\widetilde{\bD}}^0_{\tau}}^2+\|\by-\by^0\|_{\bD^0_{\kappa,\gamma}}^2\nonumber \\
& \mbox{} -\frac{\gamma^K}{\gamma^0}\bigg[ \|\bx^*-\bx^{K}\|_{\widetilde{\bD}^K_{\tau}}^2+\|\by-\by^{K}\|_{\bD^K_{\kappa,\gamma}}^2+\|\by^K-\by^{K-1}\|_{\bD^K_{\kappa,\gamma}}^2
\nonumber \\
& \mbox{} +2\eta^{K}\fprod{ \bJ T(\bx^K)^\top\by^K-\bJ T(\bx^{K-1})^\top\by^{K-1},~\bx^*-\bx^K}  +\|\bx^{K}-\bx^{K-1}\|^2_{\eta^{K}B\bL_G}\bigg]  \nonumber \\
& \leq\sum_{k=0}^{K-1}\frac{\gamma^k}{\gamma^0}E^{k+1}(\bmu) 
+ \|\bx^*-\bx^{0}\|_{{\widetilde{\bD}}^0_{\tau}}^2+\|\by-\by^0\|_{\bD^0_{\kappa,\gamma}}^2 \nonumber \\
& \mbox{}
-\frac{\gamma^{K}}{\gamma^0}\bigg[
\tfrac{1}{2}\|\bx^*-\bx^{K}\|_{\widetilde{\bD}^K_{\tau}}^2+\|\by-\by^{K}\|_{\bD^{K}_{\kappa,\gamma}}^2\bigg],
\end{align}}%
where \nsa{$E^{k+1}(\bmu)$ 
is defined in Lemma~\ref{thm:dynamic-rate},} $\cW_K=\sum_{k=1}^{K}\frac{\gamma^{k-1}}{\gamma^0}$, $\bar{\bx}^{K}=\cW_K^{-1}\sum_{k=1}^{K}\frac{\gamma^{k-1}}{\gamma^0}\bx^k$ and {$\bar{\by}^{K}=\cW_K^{-1}\sum_{k=1}^{K}\frac{\gamma^{k-1}}{\gamma^0}\by^k$ for $\by^k=\eyh{[{\btheta^k}^\top; {\bmu^k}^\top]}^\top$ for $k\geq 0$}. The last inequality in~\eqref{eq:dynamic-saddle-rate} \nsa{follows from $B^{K-1}\leq B$ and \eqref{cond-6-d},
and we use \eqref{eq:ineq-d} for $k=K$ after $\bx^{k+1}$ is replaced with $\bx^*$ in \eqref{eq:ineq-d}.} 


Note that $E^{{k+1}}(\bmu)$ 
appearing in \eqref{eq:dynamic-saddle-rate} is the error terms due to approximating {$\cP_{\cC}$ with $\cR^k$} in the $k$-th iteration of the algorithm for $k\geq 0$. 
Furthermore, dropping the non-positive terms in \eqref{eq:dynamic-saddle-rate} and using $\tilde{\tau}^k>\tau^k$ for $k\geq 0$ lead to the following bound on the Lagrangian measure
{
\begin{align} \label{eq:saddle-rate-dynamic}
&\mathcal{L}(\bar{\bx}^K,\btheta,\bmu)-\mathcal{L}({\bf x}^*,\bar{\btheta}^K,\bar{\bmu}^K)\leq \nsa{\cT^K}(\bx^*,\btheta,\bmu)/\cW_K, \\
&\nsa{\cT^K}(\bx^*,\btheta,\bmu)\triangleq \sum_{i\in\mathcal{N}}\bigg[\frac{1}{ 2\saa{\tilde\tau^0}}\norm{x^*-x_i^0}^2+\frac{1}{2\kappa_i^0}\norm{\theta_i-\theta_i^0}^2 \bigg] \nonumber \\
&+\frac{1}{ 2\gamma^0}\norm{\bmu-\bmu^0}^2+\sum_{k=0}^{K-1} \frac{\gamma^k}{\nsa{2}\gamma^0} E^{k+1}(\bmu). \nonumber 
\end{align}
}%
\nsa{As discussed at the beginning of the proof, given a primal-dual solution $(x^*,\btheta^*)$ to \eqref{eq:central_problem}, $(\bx^*,\btheta^*,\blambda^*)$ such that $\blambda^*=\mathbf{0}$ and $\bx^*=\one \otimes x^*$ is a saddle-point for $\cL$ in \eqref{eq:dynamic-saddle}.}
Next, letting $\by=\by^*$ in \eqref{eq:dynamic-saddle-rate}, dropping the last non-positive term, and the fact that $\cL(\bar{\bx}^K,\by^*)-\cL({\bx}^*,\bar{\by}^K)\geq 0$, \nsa{we obtain
{
\begin{subequations}
\begin{align}
&\|\saa{\bx^{K}-\bx^*}\|^2\leq \frac{4\gamma^0\tilde{\tau}^K}{\gamma^K}~\nsa{\cT^K}(\bx^*,\btheta^*,\blambda^*),\label{eq:x-bound}\\
&\big\|\btheta^*-\btheta^K\big\|_{\gamma^K\bD^K_{\kappa}}^2\leq \nsa{2}\gamma^0\nsa{\cT^K}(\bx^*,\btheta^*,\blambda^*). \label{eq:theta-bound-1}
\end{align}
\end{subequations}}%
Next we bound $\nsa{\cT^K}(\bx^*,\btheta^*,\blambda^*)$ and show $\|{\btheta^K}\|\leq B$ for $B$ sufficiently large.
}

Using \eqref{eq:approx_error-for-full-vector-x} and the non-expansivity of projection, $\cP_{\cB}(\cdot)$, we conclude that $\|{\tcR}^k(\bx)-\mathcal{P}_{\Ct}(\bx)\|\leq N~\Gamma \beta^{q_k}\norm{\bx}$, for all $\bx$ \nsa{and $k\geq 0$}.
Moreover, since we assumed that each $\rho_i$ has a compact domain with diameter at most $\Delta$, we immediately conclude that $\|\bx^k\|\leq\sqrt{N}~\Delta$ 
for $k\geq 1$. Hence, from \eqref{eq:prox-error-seq-1} and using non-expansivity of prox operator \saa{together with 
\eqref{eq:approx_error-for-full-vector-x}, $\bom^k=\tfrac{1}{\gamma^k}{\bmu}^k+{{\bx}^{k+1}}$ and \eqref{eq:mu-bound} for $k=0,\ldots,K-1$,} we get
{
\begin{align*}
\|\be^{k+1}\|
&\leq N~\Gamma \beta^{q_k}\|\tfrac{1}{\gamma^k}{\bmu}^k+\bx^{k+1}\|
\leq N^{\frac{3}{ 2}}\Delta\Gamma \beta^{q_k}\Big(1+\frac{3}{\gamma^k}\sum_{t=0}^{k-1} \gamma^t \Big).\vspace*{-3mm}
\end{align*}
}%
Thus, \eqref{eq:mu-bound} implies that $\sum_{k=0}^{K-1} \frac{\gamma^{k}}{\nsa{2}\gamma^0} E^{k+1}(\blambda^*)\leq\saa{\tilde{\Lambda}(K)}$ where
{
\begin{align}\label{eq:lambda}
\saa{\tilde\Lambda(K)}\triangleq \sum_{k=0}^{K-1} \frac{\Delta^2}{\nsa{2}\gamma^0}N^{2} \Gamma\beta^{q_k}\Big(\gamma^k+3\sum_{t=0}^{k-1} \gamma^t\Big)\Big({\nsa{7}\gamma^k}+3\sum_{t=0}^{k-1} \gamma^t\Big).
\end{align}}
\saa{Therefore, since $\bmu^0=\mathbf{0}$ and $\btheta^0=\mathbf{0}$, we have
{
\begin{align}\label{eq:bound-at-optimal}
\cT^K(\bx^*,\btheta^*,\blambda^*)\leq \frac{1}{2\saa{\tilde\tau^0}}\norm{\bx^*-\bx^0}^2+\sum_{i\in\cN}\frac{1}{2\kappa_i^0}\norm{\theta_i^*}^2+\tilde\Lambda(K).
\end{align}}
Moreover, from \eqref{eq:theta-bound-1}, we get
$\|{\btheta^K}\|\leq \norm{\btheta^*}+[\frac{2\gamma^0}{\gamma^K}\max_{i\in\cN}\{\kappa_i^K\}\cT^K(\bx^*,\btheta^*,\blambda^*)]^{1/2}$.
From Lemma~\ref{lem:step-size} we have $\gamma^k/\gamma^0=\Theta(\mu k)$;
thus,
\eqref{eq:lambda} and \eqref{eq:bound-at-optimal} imply that $\cT^K(\bx^*,\btheta^*,\blambda^*)=\cO(B+\sum_{k=0}^{K-1}\beta^{q_k}k^4)$. 
Recall that we chose $\{q_k\}$ such that $\sum_{k=0}^{\infty}\beta^{q_k}k^4<\infty$; hence, we obtain $\|\btheta^K\|\leq B$ for any $B>0$ satisfying
\begin{align}
\label{eq:quadratic_II}
   \norm{\btheta^*}+ [2\gamma^0\tfrac{\delta}{C^2_{\min}}\sup_{k\in\integers_+}\cT^k(\bx^*,\btheta^*,\blambda^*)]^{1/2}\leq B.
\end{align}
Note since $\sup_{K\in\integers_+}\cT^K(\bx^*,\btheta^*,\blambda^*)=\cO(B)$, the inequality in \eqref{eq:quadratic_II} is quadratic in $B$; hence, there exists $\bar{B}>0$ such that any $B\geq \bar{B}$ satisfies \eqref{eq:quadratic_II}. This completes the induction ensuring $\|\btheta^k\|\leq B$ for all $k\geq 0$ and for any $B\geq \bar{B}$.}

\saa{Indeed, as we did in Remark~\ref{rem:bound1}, we can compute $\bar{B}$ in terms of algorithm and problem parameters. Note \eqref{eq:quadratic_II} can be equivalently written as
{
\begin{equation}\label{eq:B-bound-dynamic}
    B\geq  \norm{\btheta^*}+\sqrt{2\tilde A_0 B+\tilde A_1+\tilde A_2}
\end{equation}}%
where the constants $\tilde A_0, \tilde A_1$ and $\tilde A_2$ are defined as follows:
{
\begin{align}
    \tilde A_0 &\triangleq \frac{\gamma^0\delta L_{\max}(G)}{C_{\min}^2}\norm{\bx^*-\bx^0}^2 \label{eq:A0}\\
    \tilde A_1 &\triangleq [{(2\gamma^0(\delta+1)+L_{\max}{(f)})}
\gamma^0\delta \norm{\bx^*-\bx^0}^2+ \norm{\btheta^*}_{\bC^2}^2]/C_{\min}^2 \label{eq:A1}
\end{align}}%
and $\tilde A_2\triangleq 2\gamma^0\tfrac{\delta}{C_{\min}^2}\sup_{K\in\integers_+}\tilde\Lambda(K)$. To get a sufficient condition for \eqref{eq:B-bound-dynamic}, we will further upper bound $\tilde\Lambda(K)$. Note $\tilde\Lambda(K)\leq \sum_{k=0}^{K-1}\tfrac{\gamma^0}{2}\Delta^2N^2\Gamma\beta^{q_k}\left(7\sum_{t=0}^k\tfrac{\gamma^t}{\gamma^0}\right)^2$. From the proof of Lemma~\ref{lem:step-size}, we have $\tfrac{\gamma^k}{\gamma^0}\leq \frac{\mu\tilde{\tau}^0}{2}k$ for $k\geq 1$. Moreover, since $\tilde{\tau}^0\leq \frac{1}{L_{\max}(f)}$, we have $\tilde{A}_2\leq \dbtilde{A}_2$ where
{
\begin{align}
    \label{eq:A2}
    \dbtilde{A}_2\triangleq (\gamma^0)^2\frac{\delta}{C_{\min}^2}\Delta^2N^2\Gamma\sum_{k=0}^{\infty}\beta^{q_k}\left(7+\frac{\mu k (k+1)}{2L_{\max}(f)}\right)^2.
\end{align}}%
Trivially, \eqref{eq:B-bound-dynamic} holds for any $B\geq \bar B\triangleq \tilde A_0+\norm{\btheta^*}+\sqrt{\tilde A_0^2+2\tilde A_0\norm{\btheta^*}+\tilde A_1+\dbtilde A_2}$.}

Next, we show the rate result in \eqref{eq:rate_result-d}.
\nsa{Fix any $K\geq 1$, and} define $\tilde{\btheta}=[\tilde{\theta}_i]_{i\in\cN}$ where $\tilde{\theta}_i\triangleq 2\|\theta_i^*\|\big( \|\cP_{\mathcal{K}_i^*}(g_i(\bar{x}_i^K))\|\big)^{-1}~\cP_{\mathcal{K}_i^*}(g_i(\bar{x}_i^K))\in\cK_i^*$, which implies
{
\begin{equation}
\label{eq:tilde-theta-dynamic}
\langle g_i(\bar{x}_i^K), \tilde{\theta}_i \rangle=2\|\theta_i^*\|~d_{-\cK_i}(g_i(\bar{x}_i^K)).
\end{equation}}%
Note that ${\cC}$ is a closed convex cone, and 
$\mathcal{P}_{\cC}(\bx)=\one\otimes {p}(\bx)$ \nsa{where $\nsa{p(\bx)}\triangleq\frac{1}{ |\mathcal{N}|}\sum_{i\in\mathcal{N}}x_i$.} 
Similarly, define $\tilde{\bmu}=\frac{\mathcal{P}_{\cC^\circ}(\bar{\bx}^K)}{ \|\mathcal{P}_{\cC^\circ}(\bar{\bx}^K)\|}\in \cC^\circ$, where $\cC^\circ$ denotes polar cone of $\cC$. Hence, it can be verified that $\langle \tilde{\bmu}, \bar{\bx}^K\rangle= d_{\cC}(\bar{\bx}^K)$. 
Note that $\tilde{\bmu} \in \cC^\circ$ implies that $\sigma_{\cC}(\tilde{\bmu})=0$; moreover, we also have $\Ct \subseteq \cC$; hence, $\sigma_{\Ct} (\tilde{\bmu})\leq\sigma_{\cC}(\tilde{\bmu})=0$. Therefore, we can conclude that $\sigma_{\Ct}(\tilde{\bmu})=0$ since $\zero\in \Ct$. Together with \eqref{eq:tilde-theta-dynamic}, we get
{
\begin{equation}\label{eq:lagrange-equality}
\saa{\cL(\bar{\bx}^K,\tilde{\btheta},\tilde{\bmu})= \varphi(\bar{\bx}^K)+2\sum_{i\in\cN} d_{-\cK_i}(g_i(\bar{x}_i^K))\|\theta_i^*\|+d_{\cC}(\bar{\bx}^K).} \vspace*{-2mm}
\end{equation}}%
Since $\bx^*\in\Ct$, we also have that
{
\begin{equation}\label{eq:v-bar}
\fprod{\bx^*,~\bar{\bmu}^K}-\sigma_{\Ct}(\bar{\bmu}^K) \leq \sup_{\bmu} \fprod{\bx^*,~{\bmu}}-\sigma_{\Ct}({\bmu}) =\ind{\Ct}(\bx^*)=0.
\end{equation}}%
For any $i\in\cN$, $\bar{\theta}_i^K\in\nsa{\cK_i^*}$; hence, $\sigma_{-\cK_i}(\bar{\theta}_i^K)=0$. In addition, since $\bar{\theta}^K_i\in\cK_i^*$, and $g_i( x^*)\in\nsa{-\cK_i}$, we have
$\fprod{g_i(x^*),~\bar{\theta}^K_i} \leq 0$ which together with \eqref{eq:v-bar} implies $\cL(\bx^*,\bar{\btheta}^K,\bar{\bmu}^K) {\leq} \varphi(\bx^*)$. {Setting $\btheta=\tilde{\btheta}$ and $\bmu=\tilde{\bmu}$ \saa{within
the $\cT^K$ definition in~\eqref{eq:saddle-rate-dynamic},} 
one can show that $\cT^K(\bx^*,\tilde\btheta,\saa{\tilde\bmu})\leq \saa{\tilde\Lambda_0+\tilde\Lambda_1(K)}$, where
{
\begin{align*}
&\saa{\tilde\Lambda_0}\triangleq \frac{1}{2\gamma_0}+ \frac{1}{2\saa{\tilde\tau^0}}\norm{\bx^*-\bx^0}^2+\sum_{i\in\cN}\frac{2}{\kappa_i^0}\norm{\theta_i^*}^2,\\
&\saa{\tilde\Lambda_1(K)}\triangleq\sum_{k=0}^{K-1} \frac{\Delta^2}{2\gamma^0}N^{2} \Gamma\beta^{q_k}\Big(\gamma^k+3\sum_{t=0}^{k-1} \gamma^t\Big)\Big(1+ 7\gamma^k+ 3\sum_{t=0}^{k-1} \gamma^t\Big).
\end{align*}}}

Therefore, using \eqref{eq:lagrange-equality}, $\cL(\bx^*,\bar{\btheta}^K,\bar{\bmu}^K) {\leq} \varphi(\bx^*)$, the first inequality in \eqref{eq:saddle-rate-dynamic}, and the fact $\bmu^0=\mathbf{0}$ and $\btheta^0=\mathbf{0}$, we get
{
\begin{align}\label{eq:upper-bound-dynamic}
&\varphi(\bar{\bx}^K)-\varphi(\bx^*)+2\sum_{i\in\cN}\|\theta_i^*\|~ d_{-\cK_i}(g_i(\bar{x}_i^K))+d_{\cC}(\bar{\bx}^K) \nonumber \\
&\leq \saa{\frac{1}{\cW_K}\bigg(\tilde\Lambda_0+\tilde\Lambda_1(K)\bigg)= \frac{1}{\cW_K}\Lambda(K)},
\end{align}}%
\saa{where $\Lambda(K)\triangleq \tilde\Lambda_0+\tilde\Lambda_1(K)$.}
\nsa{\saa{The inequality \eqref{eq:upper-bound-dynamic}} follows from $\sum_{k=0}^{K-1} \frac{\gamma^{k}}{\nsa{2}\gamma^0} E_1^{k+1}(\tilde{\bmu})\leq \saa{\tilde\Lambda_1(K)}$ where we assume $\sqrt{N}\Delta\geq1$ (for simplicity of the bounds).}

Since $({\bf x}^*,\btheta^*,\blambda^*)$ is a saddle-point for $\cL$ in \eqref{eq:dynamic-saddle} \nsa{with $\blambda^*=\mathbf{0}$}, we have $\mathcal{L}(\bar{\bx}^K,\btheta^*,\blambda^*)-\mathcal{L}({\bf x}^*,\btheta^*,\blambda^*) \geq 0$; therefore,
\begin{equation}\label{eq:aux-lower-dynamic}
\varphi(\bar{\bx}^K)-\varphi({\bf x}^*)+\sum_{i\in\cN}\fprod{\theta_i^*,~g_i(\bar{x}_i^K)}\geq 0.
\end{equation}
Since $\theta_i^*\in\cK_i^*$, using conic decomposition of \nsa{$g_i(\bar{x}_i^K)$}, we obtain that
$\langle g_i(\bar{x}_i^K), \theta_i^*\rangle\leq \|\theta^*_i\|~d_{-\cK_i}(g_i(\bar{x}_i^K)).$ %
Thus, together with \eqref{eq:aux-lower-dynamic}, we conclude that
{
\begin{equation}\label{eq:lower-bound-dynamic}
\varphi(\bar{\bx}^K)-\varphi({\bf x}^*)+\sum_{i\in\cN}\|\theta^*_i\|~d_{-\cK_i}(g_i(\bar{x}_i^K)) \geq 0.
\end{equation}}%
\nsa{The desired result in \eqref{eq:rate_result-d-a} follows from combining \eqref{eq:upper-bound-dynamic} and \eqref{eq:lower-bound-dynamic}. Moreover, \eqref{eq:x-bound} and \break $\cT^K(\bx^*,{\btheta^*},\blambda^*)\leq \Lambda(K)$ imply \eqref{eq:rate_result-d-b}. Finally, recall that $\Lambda(K)=\cO(\sum_{k=0}^{K-1}\beta^{q_k}k^4)$; thus, \break $\sup_{K\in\integers_+}\Lambda(K)<\infty$ due to our choice of $\{q_k\}$ and this completes the proof.}
\qed


{\bf Proof of the claims in Remark~\ref{rem:N-complexity}.}
\saa{
Recall that the results of Theorem~\ref{thm:dynamic-error-bounds} are valid for any $B\geq \bar B\triangleq \tilde A_0+\norm{\btheta^*}+\sqrt{\tilde A_0^2+2\tilde A_0\norm{\btheta^*}+\tilde A_1+\dbtilde A_2}$ -- it is worth emphasizing that one can set $B=0$ when \ehh{$\{g_i\}_{i\in\cN}$} are affine. It follows from \eqref{eq:A0}, \eqref{eq:A1} and \eqref{eq:A2} that $\tilde A_0=\cO(\gamma^0N)$ as $\norm{\bx^*-\bx^0}^2=\cO(N)$, $\tilde A_1=\cO((\gamma^0+1)\gamma^0 N+N)$ as $\norm{\btheta^*}_{\bC^2}^2=\cO(N)$, and $\dbtilde A_2=\cO((\gamma^0)^2N^2\Gamma)$. Therefore,
{
$$B=\cO(\sqrt{N}+\gamma^0 N+\sqrt{(\gamma^0+1)\gamma^0 N+N}+N\gamma^0\sqrt{\Gamma}).$$
}%
Recall that the iteration complexity of the proposed method DPDA-TV is $\Lambda(K)/\cW_K$. Note that from Lemma \ref{lem:step-size}, $\cW_K=\Theta(\mu\tilde \tau^0 K^2)$ and $\Lambda(K)=\cO(\tfrac{N}{\gamma^0}+NB+(\gamma^0+1)N^2\Gamma)$. For an undirected time-varying graph $\{\cG_t\}$, $\Gamma=\nsa{\Theta(1/N)}$ and $\log(1/\varsigma)=\Omega(1/N^3)$ --see~\cite{nedic2009distributedquant}. Selecting
$\gamma^0=1/\sqrt{N}$ and $q_k=(5+c)\log_{{1}/{\varsigma}}(k+1)$ for some $(0,1)\ni\varsigma\geq \beta$, implies that the iteration complexity of DPDA-TV is $\cO(N^2/K^2)$ and drops to \ehh{$\cO(N/K^2)$} when \ehh{$\{g_i\}_{i\in\cN}$} are affine functions. Furthermore, the total number of communications to achieve $\epsilon$-suboptimality/infeasibility is $\tilde\cO(N^4/\sqrt{\epsilon})$, and drops to $\tilde\cO(N^{3.5}/\sqrt{\epsilon})$ when \ehh{$\{g_i\}_{i\in\cN}$} are affine functions. }


\section{Supplementary Material: Additional Numerical Experiments}
\label{supp:numeric}
In this section, we illustrate the performance of DPDA and DPDA-TV for solving synthetic C-LASSO problems. We first test the effect of network topology on the performance of proposed algorithms, and then we compare DPDA and DPDA-TV with other distributed primal-dual algorithms, DPDA-S and DPDA-D, proposed in~\cite{aybat2016primal} for solving \eqref{eq:central_problem} -- it is shown in~\cite{aybat2016primal} that both DPDA-S and DPDA-D converge
with $\cO(1/K)$ ergodic rate when $\bar{\varphi}$ is merely convex. In fact, when $\bar{\varphi}$ is strongly convex with modulus $\mu>0$, using the fact that {${\varphi}(\bar{\bx}^K)-{\varphi}(\bx^*)\geq \frac{\mu}{2}\norm{\bar{\bx}^K-\bx^*}^2$}, it immediately follows that $\norm{\bar{\bx}^K-\bx^*}^2\leq \cO(1/K)$.

We consider an isotonic C-LASSO problem over network $\cG^t=(\cN,\cE^t)$ for $t\geq 0$. This problem can be formulated in a centralized form as
$x^*\triangleq\argmin_{x\in\reals^{n}} ~\left\{ \frac{1}{2}\norm{Cx-d}^2+\lambda\norm{x}_1:\ Ax\leq {\bf 0}\right\}$,
where the matrix $C=[C_i]_{i\in\cN}\in\reals^{m|\cN|\times n}$, $d=[d_i]_{i\in \cN}\in\reals^{m|\cN|}$, and $A\in\reals^{n-1\times n}$. In fact, the matrix $A$ captures the isotonic feature of vector $x^*$, and can be written explicitly as, $A(\ell,\ell)=1$ and $A(\ell,\ell+1)=-1$, for $1\leq \ell \leq n-1$, otherwise it is zero.
Each agent $i$ has access to $C_i$, $d_i$, and $A$; hence, by making local copies of $x$, the decentralized formulation 
can be expressed as 
{
\begin{align}\label{prob:lasso-dist}
\min_{\substack{\bx=[x_i]_{i\in\cN}\in\cC, \\ Ax_i\leq {\bf 0}\ i\in\cN}} ~ \tfrac{1}{2}\sum_{i\in\cN}\norm{C_ix_i-d_i}^2+\tfrac{\lambda}{|\cN|}\sum_{i\in\cN}\norm{x_i}_1,
\end{align}}%
where $\cC$ is the consensus set - see \eqref{eq:consensus_set}.

We generated random C-LASSO problems that are strongly convex, i.e., $\min_{i\in\cN}\mu_i\triangleq\underline{\mu}>0$. In particular, we set $n=20$, $m=n+2$, $\lambda=0.05$ and $\cK_i=\reals^{n-1}_{+}$ for $i\in\cN$. Moreover, for each $i\in\cN$, we generate $C_i\in\reals^{m\times n}$ as follows: after $mn$ entries i.i.d. with {standard} Gaussian distribution are sampled, the condition number of $C_i$ is normalized by sampling the singular values from $[1,3]$ uniformly at random which implies that $\ubar{\mu}>0$.

We generate the first 5 and the last 5 components of $x^*$ by sampling from $[-10,0]$ and $[0,10]$ uniformly at random in ascending order, respectively, and the other middle 10 components are set to zero; hence, $[x^*]_j\leq [x^*]_{j+1}$ for $j=1,\ldots,n-1$. Finally, we set $d_i=C_i({x}^*+\epsilon_i)$,  where $\epsilon_i\in\reals^n$ is a random vector with i.i.d. components following Gaussian distribution with zero mean and standard deviation of $10^{-3}$.\\%
\indent{\bf Generating static undirected network:} $\cG=(\cN,\cE)$ is generated as a random small-world network. Given $|\cN|$ and the desired number of edges $|\cE|$, we choose $|\cN|$ edges creating a random cycle over nodes, and then the remaining $|\cE|-|\cN|$ edges are selected uniformly at random.\\%
\indent{\bf Generating time-varying undirected network:} Given $|\cN|$ and the desired number of edges $|\cE_0|$ for the initial graph, we generate a random small-world $\cG_0=(\cN,\cE_0)$ as described above. Given $M\in\integers_+$, and $p\in(0,1)$, for each $k\in\integers_+$, we generate $\cG^t=(\mathcal{N},\mathcal{E}^t)$, the communication network at time $t\in\{(k-1)M,\ldots,kM-2\}$ by sampling $\lceil p |\cE_0|\rceil$ edges of $\cG_0$ uniformly at random and we set $\mathcal{E}^{kM-1}=\cE_0\setminus \bigcup_{t=(k-1)M}^{kM-2}\cE^t$. In all experiments, we set $M=5$, $p=0.8$ and the number of communications per iteration is set to $q_k=10\ln(k+1)$.
\subsection{Effect of Network Topology}\label{sec:numeric-effect-net}
In this section, we test the performance of DPDA and DPDA-TV on \emph{undirected} communication networks. To show the effect of network topology, we consider four scenarios in which the number of nodes $|\cN|\in\{10,~40\}$ and the average number of edges per node $(|\cE|/|\cN|)$ is either $\approx 1.5$ or $\approx 4.5$. For each scenario, we plot 
both the 
relative error, i.e., $\max_{i\in\cN}\norm{x_i^k-x^*}/ \norm{x^*}$ and the infeasibility, i.e., 
$\max_{i\in\cN}\norm{(A\bar{x}_i^k)_+}$,
versus iteration number $k$. {All the plots show the average statistics over 25 replications.}

{\bf Testing DPDA on static undirected networks:} 
We generated the static small-world networks $\cG=(\cN,\cE)$ as described above for $(|\cN|,|\cE|)\in\{(10,15),~(10,45),$ $(40,60),~(40,180)\}$ and solve the saddle-point formulation \eqref{eq:static-saddle} corresponding to \eqref{prob:lasso-dist} using DPDA. 
For DPDA, displayed in Fig.~\ref{alg:PDS}, we chose $\delta=\nsa{L_\max(f)}$, $\gamma^0=\frac{1}{2d_{\max}+\nsa{L_\max(f)}}$, 
$\tilde{\tau}^0=\frac{1}{L_{\max}\nsa{(f)}+4}$, and $\kappa^0=\gamma^0\frac{L_{\max}\nsa{(f)}}{\norm{A}^2}$. In Fig.~\ref{static-net}, we plot $\max_{i\in\cN}\norm{x_i^k-x^*}/ \norm{x^*}$ and $\max_{i\in\cN}\norm{(A\bar{x}_i^k)_+}$ statistics for DPDA versus iteration number $k$. Note that compared to average edge density, the network size has more influence on the convergence rate, i.e., the smaller the network faster the convergence is. However, for fixed size network, as expected, higher the density faster the convergence is.
\begin{figure}[h]
\vspace*{-3mm}
\centering
\includegraphics[scale=0.25]{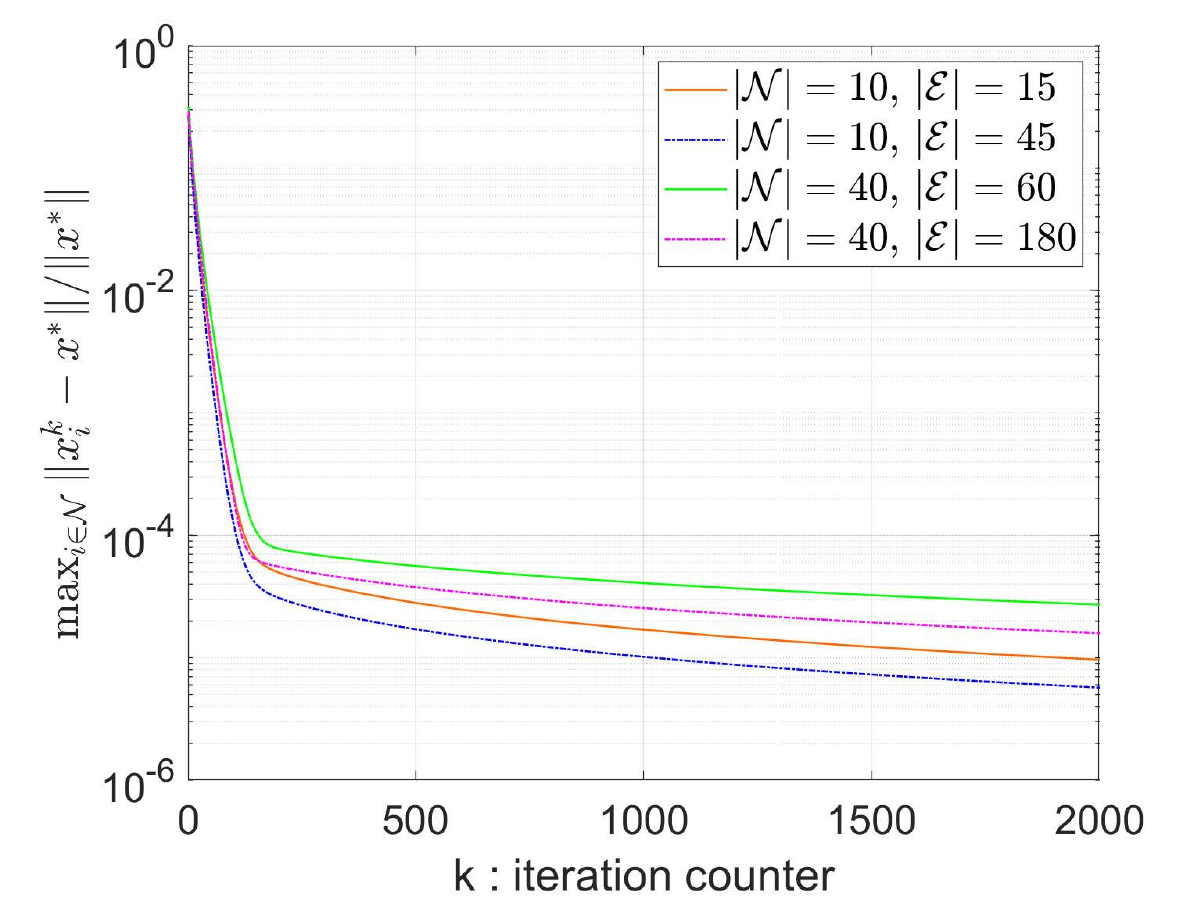}\hspace*{-3mm}
\includegraphics[scale=0.25]{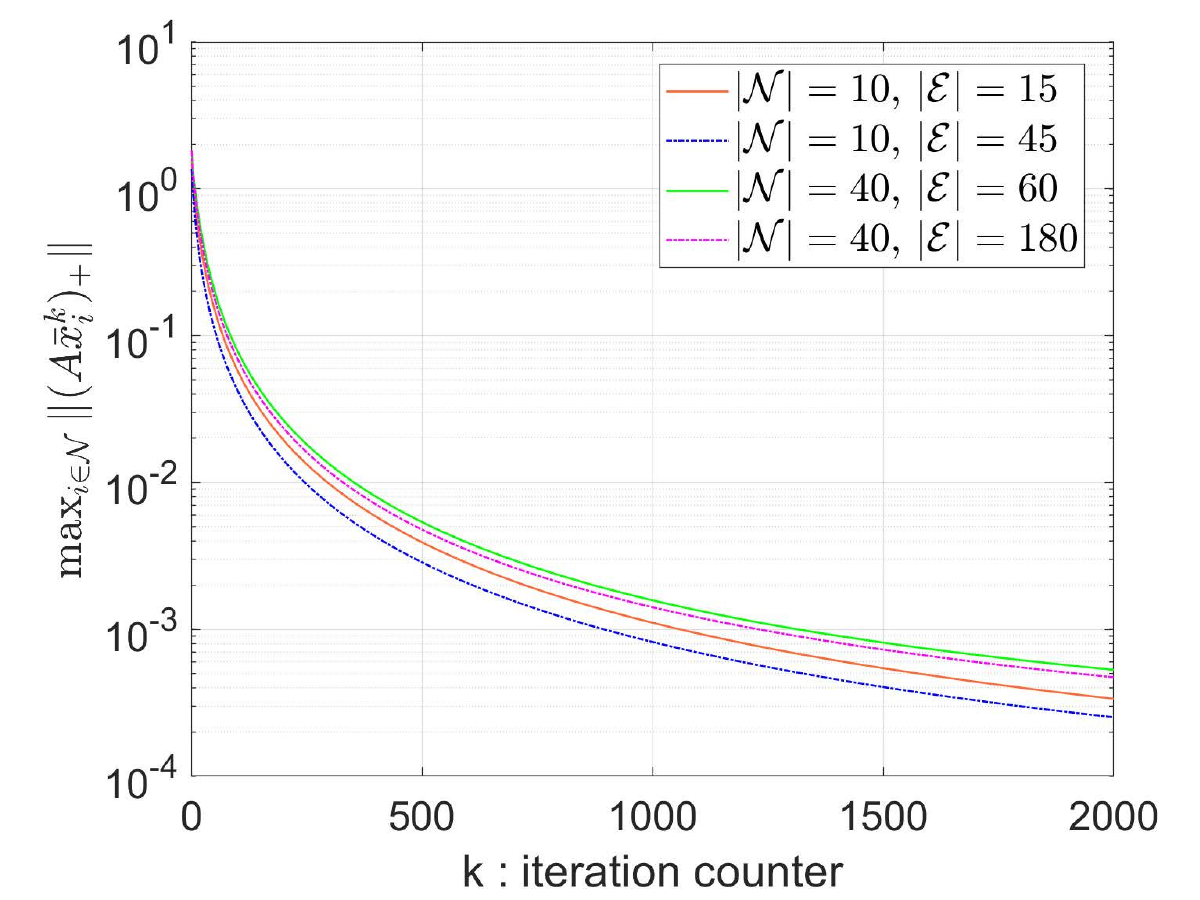}
\caption{Effect of network topology on the convergence rate of DPDA}\vspace*{-3mm}
\label{static-net}
\end{figure}
\begin{figure}[h]
\vspace*{-2mm}
\centering
\includegraphics[scale=0.25]{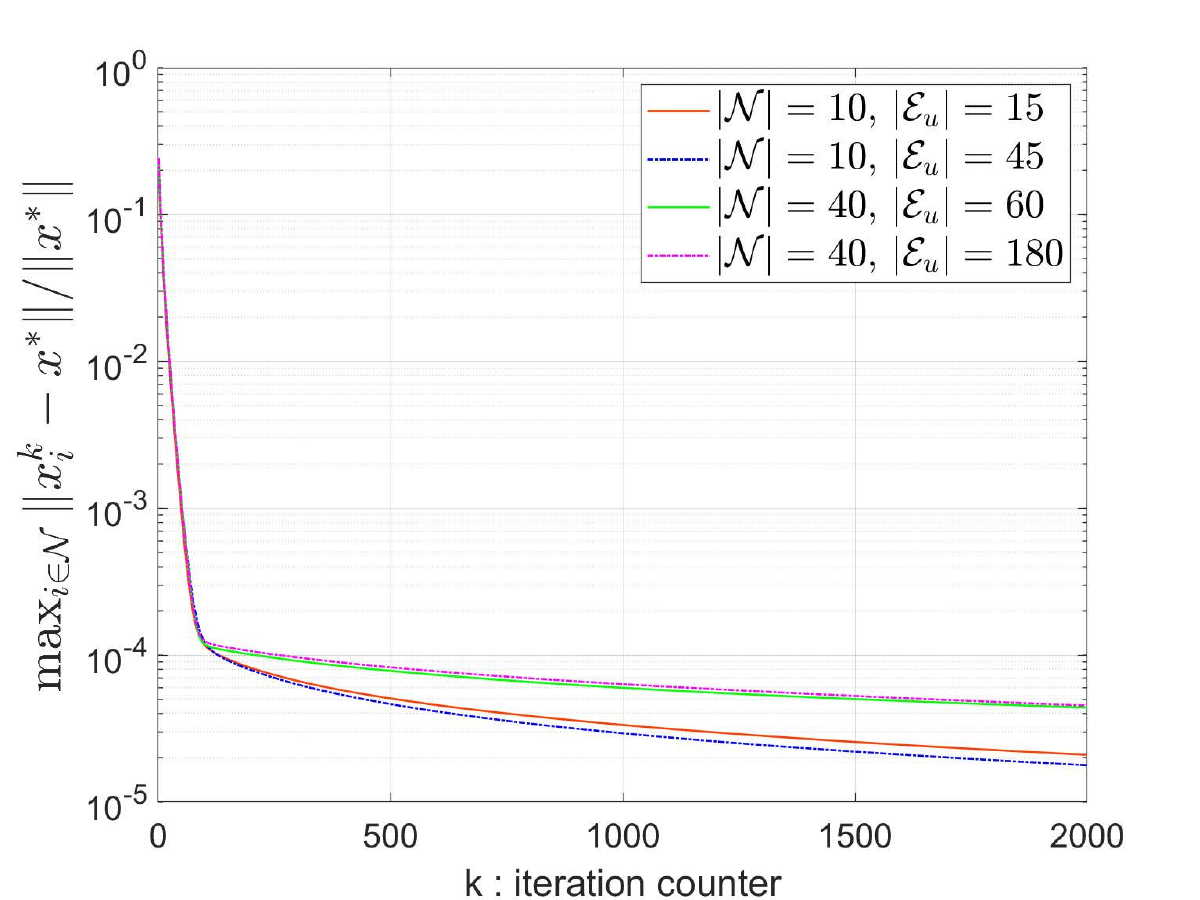}\hspace*{-3mm}
\includegraphics[scale=0.25]{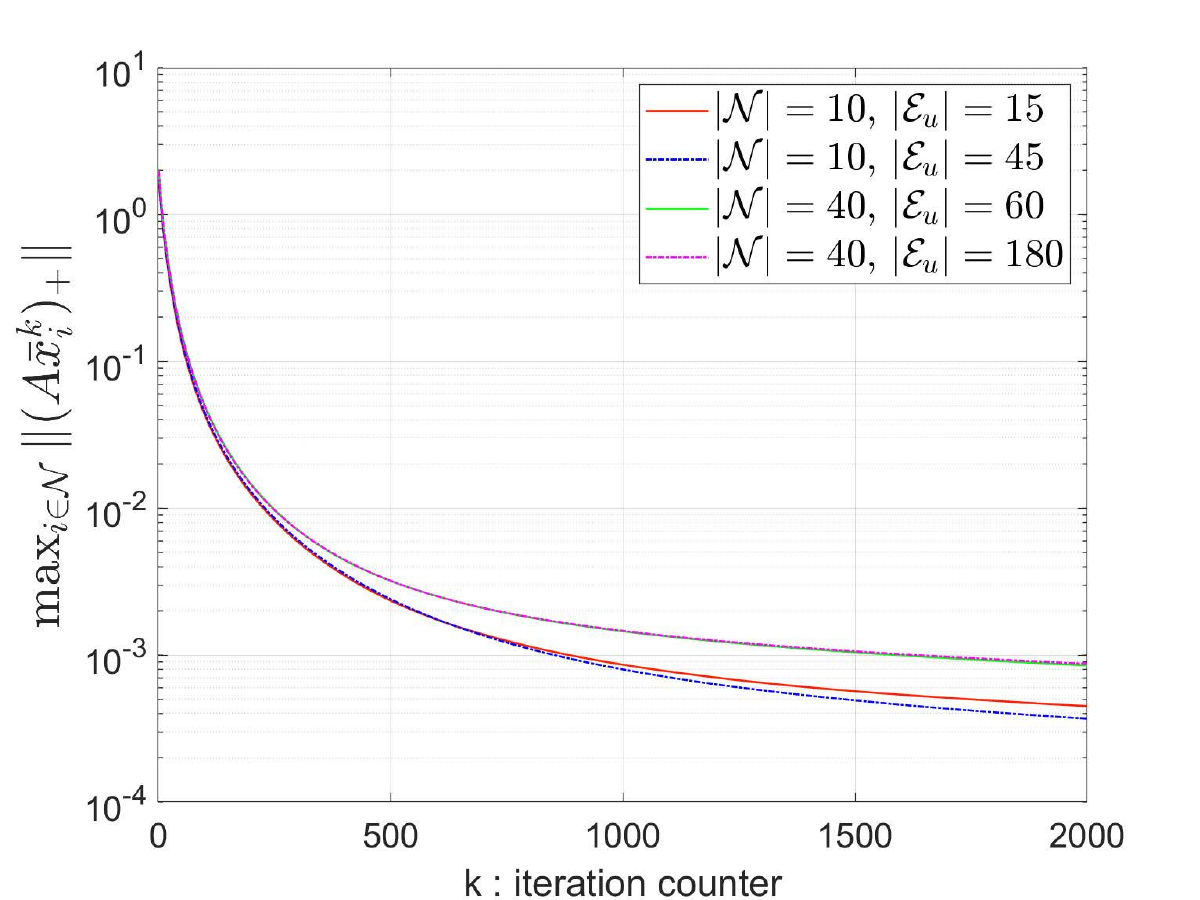}
\caption{Effect of network topology on the convergence rate of DPDA-TV}
\label{dynamic-net}
\vspace*{-2mm}
\end{figure}

{\bf Testing DPDA-TV on time-varying undirected networks:} We first generated an undirected graph $\cG_u=(\cN,\cE_u)$ as in the static case, and let $\cG_0=\cG_u$. Next, we generated $\{\cG^t\}_{t\geq 1}$ as described above by setting $M=5$ and $p=0.8$. For each consensus round $t\geq 1$, $V^t$ is formed according to Metropolis weights, i.e., for each $i\in\cN$, $V^t_{ij}=1/(\max\{d_i,d_j\}+1)$ if $j\in\cN_i^t$, $V^t_{ii}=1-\sum_{i\in\cN_i}V^t_{ij}$, and $V^t_{ij}=0$ otherwise -- see~\eqref{eq:approx-average-dual-undirected} for our choice of $\cR^k$.

For DPDA-TV, displayed in Fig.~\ref{alg:PDD}, we chose $\delta=1$, which lead to the initial step-sizes as {$\gamma^0=\frac{1}{2}$, $\tilde \tau^0=\frac{1}{L_{\max}\nsa{(f)}+2}$, and $\kappa^0=\frac{1}{2\norm{A}^2}$.}
In Fig.~\ref{dynamic-net}, we plot $\max_{i\in\cN}\norm{x_i^k-x^*}/ \norm{x^*}$ and $\max_{i\in\cN}\norm{(A\bar{x}_i^k)_+}$ statistics for DPDA-TV versus iteration number $k$ --  we used $\{\bx^k\}$ to compute the error statistics instead of $\{\bx^k\}$ as $\bx^k$ is never actually computed for DPDA-TV. Note that network size and average edge density have the same impact on the rate as in the static case.
\subsection{Comparison with other methods}
\label{sec:comparison}
We also compared our methods with DPDA-S and DPDA-D, in terms of the relative error and infeasibility of the ergodic iterate sequence, i.e., $ \max_{i\in\cN}\norm{\bar{x}_i^k-x^*}/ \norm{x^*}$ and $\max_{i\in\cN}\norm{(A\bar{x}_i^k)_+}$. We further report the performance of our algorithms in terms of relative error of the actual iterate sequence.
In this section we fix the number of nodes to $|\cN|=10$ and the average edge density to $|\cE|/|\cN|=4.5$ -- we observed the same convergence behavior for the other network scenarios discussed in the previous section.\\%
\indent{\bf Static undirected network:} We generated $\cG=(\cN,\cE)$ and chose the algorithm parameters as in the previous section. Moreover, the step-sizes of DPDA-S are set to the initial steps-sizes of DPDA. As it can be seen in Fig. \ref{static-compare}, DPDA has faster convergence when compared to DPDA-S.
\begin{figure}[h]
\vspace*{-3mm}
\centering
\includegraphics[scale=0.25]{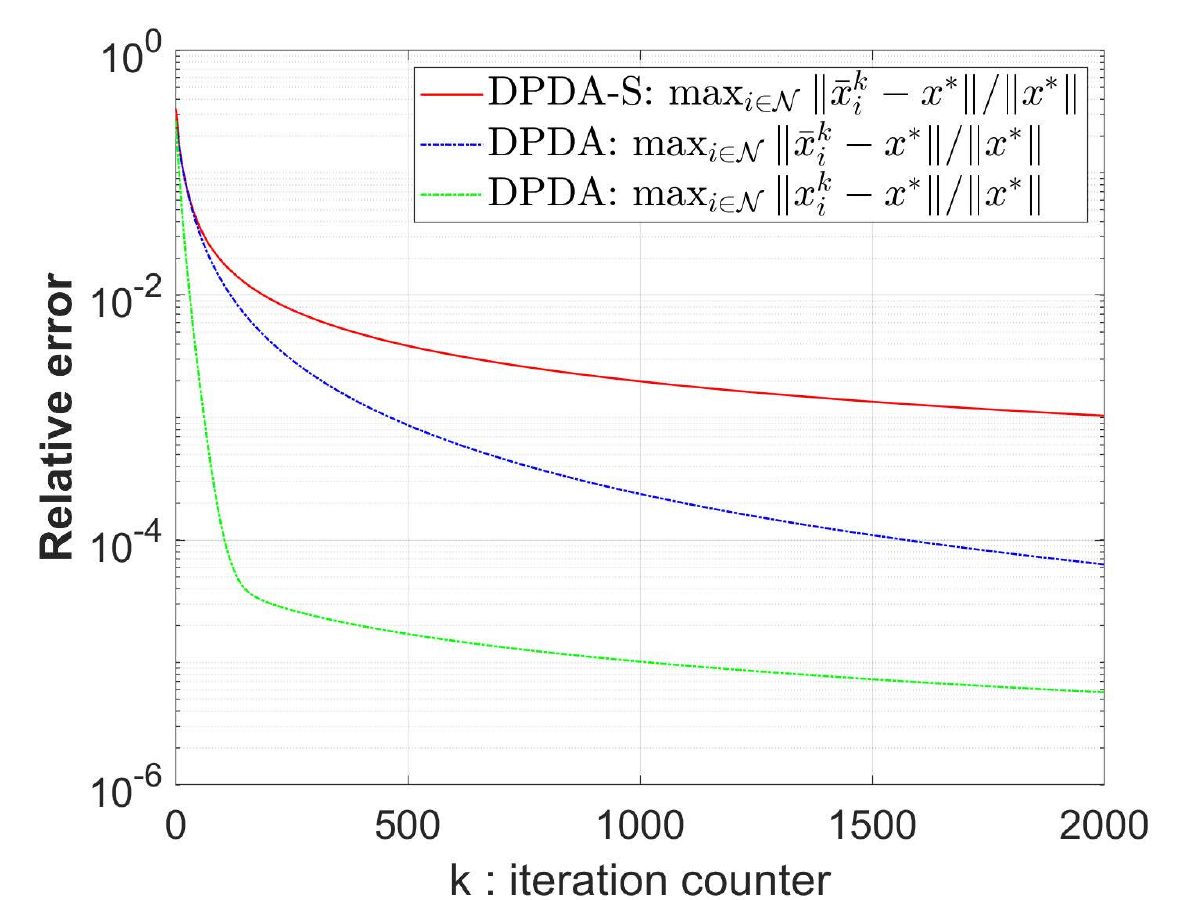}\hspace*{-3mm}
\includegraphics[scale=0.25]{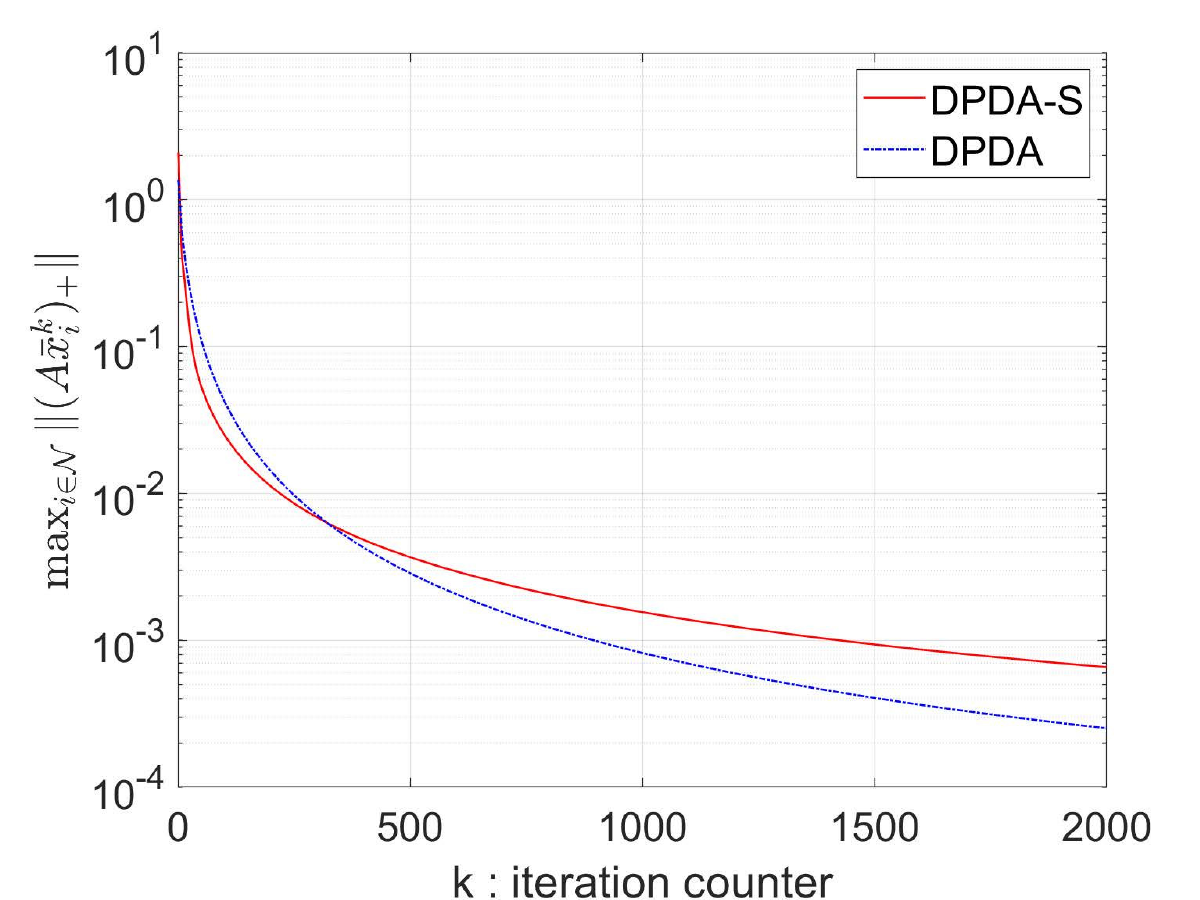}
\caption{DPDA vs DPDA-S over undirected static network}
\label{static-compare}
\end{figure}

{\bf Time-varying undirected network:} We generated the network sequence $\{\cG^t\}_{t\geq 0}$ and chose the parameters as in the \eyh{previous} section. The step-sizes of DPDA-D are also set to the initial steps-sizes of DPDA-TV. Fig. \ref{undirected-compare} shows that DPDA-TV has faster convergence when compared to DPDA-D.
\begin{figure}[h]
\vspace*{-3mm}
\centering
\includegraphics[scale=0.25]{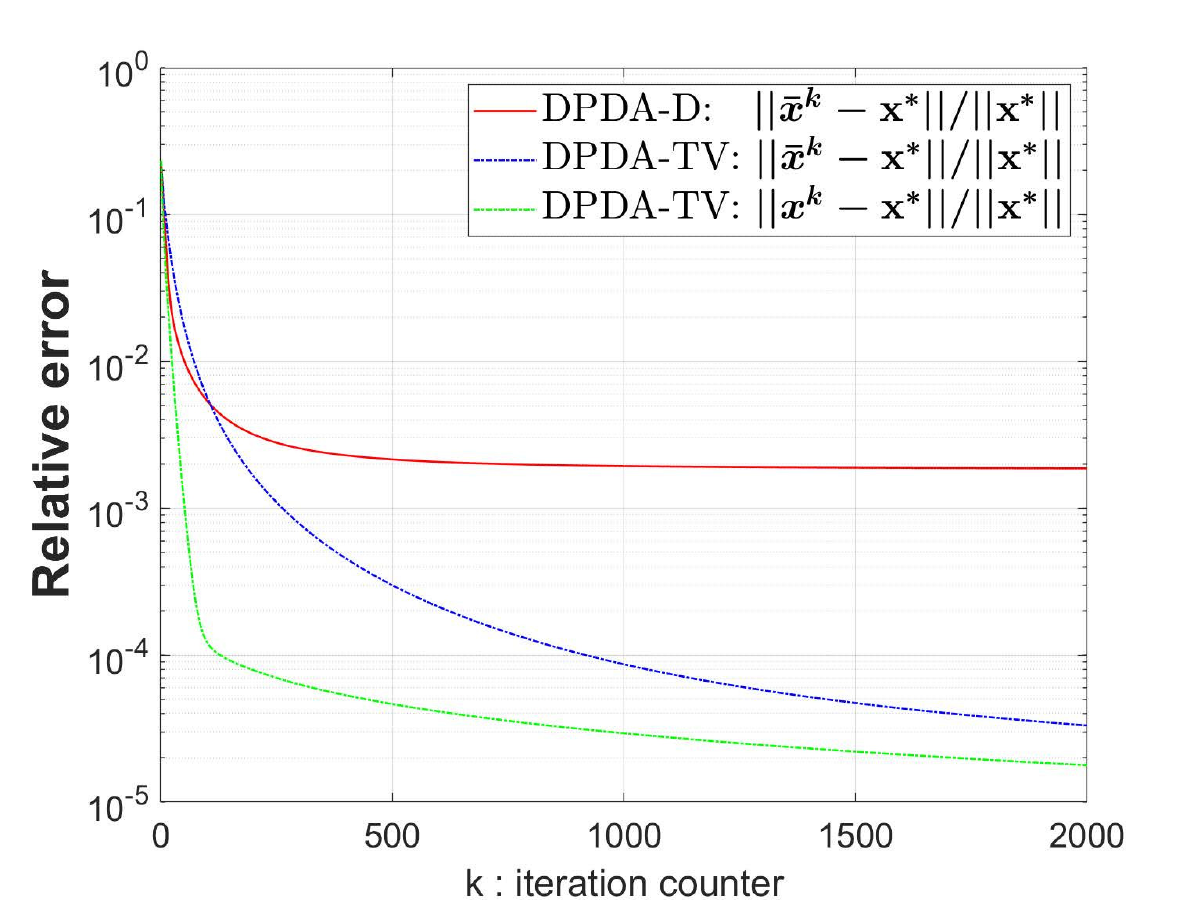}\hspace*{-3mm}
\includegraphics[scale=0.25]{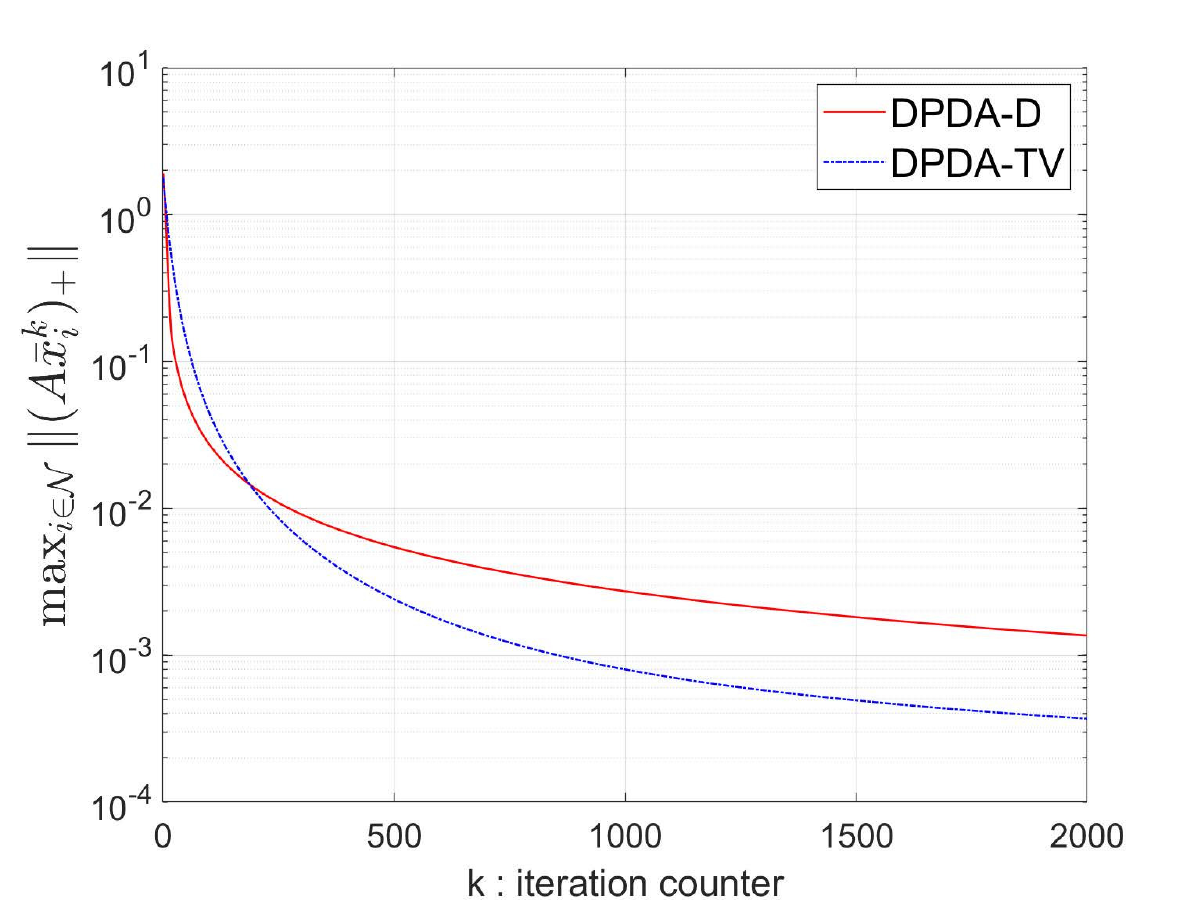}
\caption{DPDA-TV vs DPDA-D over undirected time-varying network}
\label{undirected-compare}
\end{figure}

{\bf Time-varying directed network:} In this scenario, we generated time-varying communication networks similar to \cite{nedich2016achieving}. Let $\cG_d=(\cN,\cE_d)$ be the directed graph shown in Fig.~\ref{fig:Gd} where it has $|\cN|=12$ nodes and $|\cE_d|=24$ directed edges. {We set $\cG_0=\cG_d$, and we generate $\{\cG^t\}_{t\geq 0}$ generated as in the undirected case with parameters $M=5$ and $p=0.8$; hence, $\{\cG^t\}_{t\geq 0}$ is 
$M$-strongly-connected. Moreover, communication weight matrices $V^t$ are formed according to rule \eqref{eq:directed-weights}. We chose the initial step-sizes for DPDA-TV as in the time-varying undirected case, and the constant step-sizes of DPDA-D is set to the initial steps-sizes of DPDA-TV.
In Fig.~\ref{directed-compare} we compare DPDA-TV against DPDA-D. We observe that over time-varying directed networks DPDA-TV again outperforms DPDA-D for both statistics.}
\begin{figure}[h]
\vspace*{-3mm}
\centering
\includegraphics[scale=0.25]{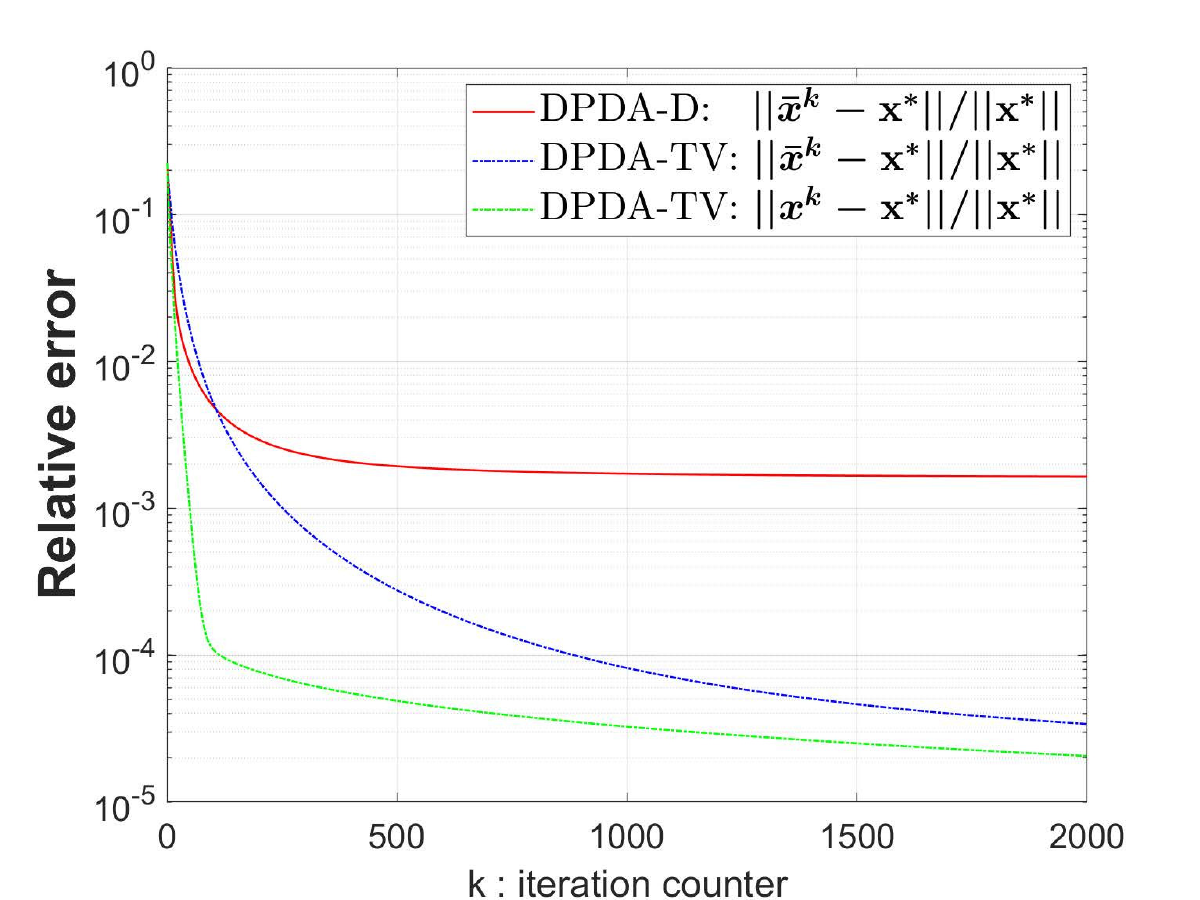}\hspace*{-3mm}
\includegraphics[scale=0.25]{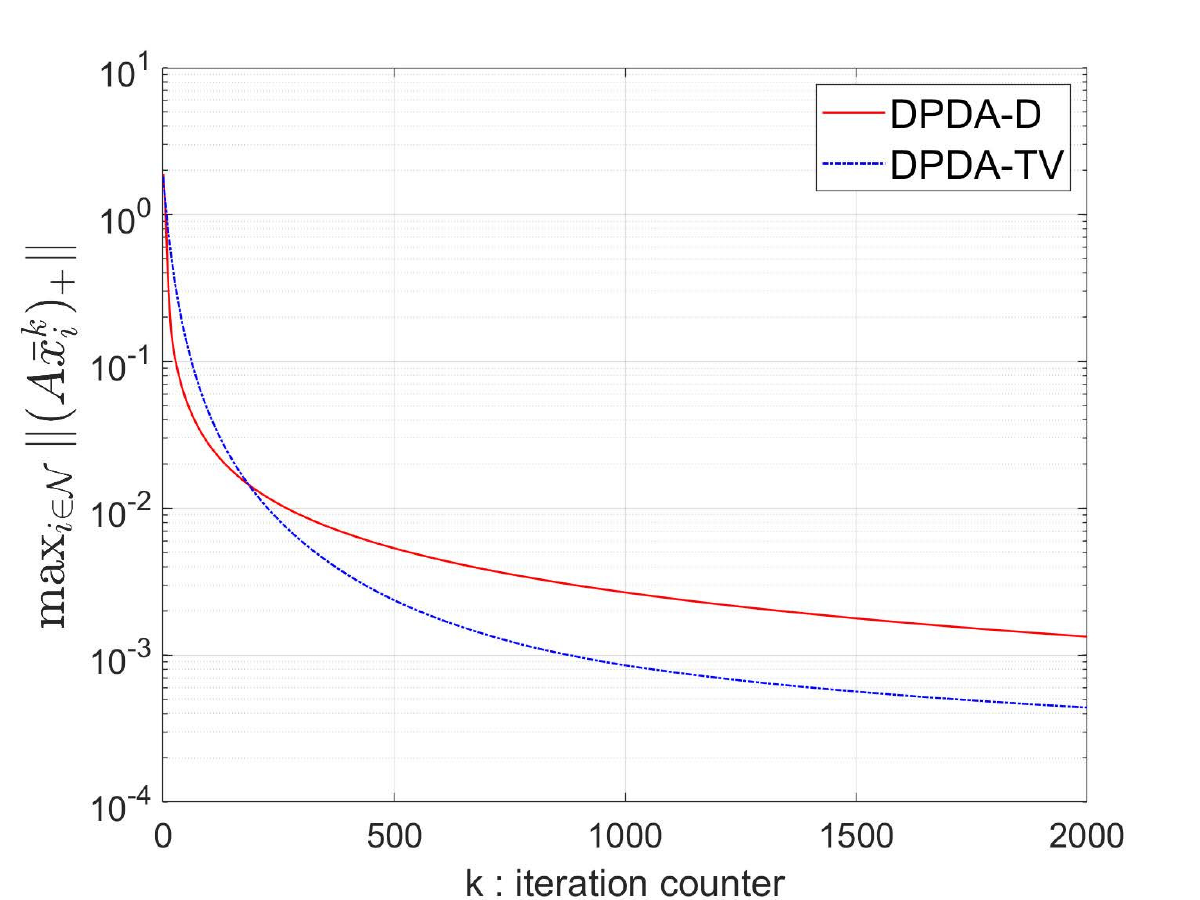}
\caption{DPDA-TV vs DPDA-D over directed time-varying network.}
\label{directed-compare}
\end{figure}

\end{document}